\theoremstyle{plain}
\newtheorem{corollary}{Corollary}
\newtheorem{definition}{Definition}
\newtheorem{lemma}{Lemma}
\newtheorem{proposition}{Proposition}
\newtheorem{theorem}{Theorem}
\numberwithin{equation}{section}
\newcommand{\bth}{\begin{theorem}}
	\newcommand{\ble}{\begin{lemma}}
		\newcommand{\bcor}{\begin{corr}}
			\newcommand{\bdeff}{\begin{deff}}
				\newcommand{\bprop}{\begin{proposition}}
					\newcommand{\ele}{\end{lemma}}
				\newcommand{\ecor}{\end{corr}}
			\newcommand{\edeff}{\end{deff}}
		\newcommand{\eprop}{\end{proposition}}
	\newcommand{\la}{\lambda}
	\newcommand{\eps}{\varepsilon}
	\newcommand{\supp}{\text{supp }}
	\renewcommand{\Pi}{\varPi}
	\renewcommand{\epsilon}{\varepsilon}
	\newcommand{\ls}{\lesssim}
	\newcommand{\M}{M}
	\newcommand{\N}{\Omega}
	\newcommand{\D}{\mathcal{D}}
	\newcommand{\gs}{\gtrsim}
	\newcommand{\lap}{\mbox{$\Delta_g$}}
\begin{document}
		\title[]{Sharp $L^p$ estimates and size of nodal sets for generalized Steklov eigenfunctions}
		
		\author{Xiaoqi Huang, Yannick Sire, Xing Wang and Cheng Zhang}

		
		\address{Department of Mathematics, University of Maryland, College Park, MD 20742, United States}
		\email{xhuang49@umd.edu}
		\address{Department of Mathematics, Johns Hopkins University, Baltimore, MD 21218, United States}
		\email{ysire1@jhu.edu}
		\address{Department of Mathematics, Hunan University, Changsha, HN 410012, China}
		\email{kwxing@gmail.com}
		\address{Mathematical Sciences Center, Tsinghua University, Beijing 100084, China}
		\email{czhang98@tsinghua.edu.cn}

		
		\keywords{}

		\dedicatory{}

		\begin{abstract}
			We prove sharp $L^p$ estimates for the Steklov eigenfunctions on compact manifolds with boundary in terms of their $L^2$ norms on the boundary. We prove it by  establishing $L^p$ bounds for the harmonic extension operators as well as the spectral projection operators on the boundary. Moreover, we derive  lower bounds on the size of nodal sets for a variation of the Steklov spectral problem. We consider a generalized version of the Steklov problem by adding a non-smooth potential on the boundary but some of our results are new even without  potential.  
		\end{abstract}
		
		\maketitle
		
		\section{Introduction}
		Eigenfunction estimates have been recently considered in the case of Schr\"odinger operators with singular potentials (see e.g. \cite{simon}, \cite{BSS19}, \cite{blair2021uniform}, \cite{jmpa}, \cite{fs}, \cite{hs}, \cite{hz}, \cite{hztorus}). In the present paper, we investigate a generalization of the well-known Steklov problem with non-smooth potentials. For surveys on the Steklov  problem, see e.g. \cite{GP17}, \cite{col}.
		
		Let $(\Omega,h)$ be a smooth manifold with boundary $(M,g)$, where dim $\Omega=n+1\ge2$ and $h|_M=g$. The Steklov eigenvalue problem with potential $V$ is
		\[\begin{cases}
			\Delta_h e_\la(x)=0,\ x\in\Omega\\
			\partial_ \nu e_\la(x)+V(x)e_\la(x)=\la e_\la(x),\ x\in \partial \Omega=M.
		\end{cases}\]
		Here $\nu$ is an unit outer normal vector on $\M$.
		Then the restriction  of the eigenfunction $e_\la(x)$ (denote also by $e_\la$ to simplify notations) to the boundary $\M$ is an eigenfunction of $\D+V$:
		\[(\D+V)e_\la(x)=\la e_\la(x),\ x\in \M.\] Here $\D$ is the Dirichlet-to-Neumann operator $\D$: $H^{\frac12}(\M)\to H^{-\frac12}(\M)$
		\[\D f=\partial_\nu u|_\M,\]
		where $u$ is the harmonic extension of $f$:
		\begin{equation}\label{harm}
		\begin{cases}\Delta_hu(x)=0,\ x\in \N, \\
			u(x)=f(x),\ x\in \partial \Omega=\M.
		\end{cases}\end{equation}
		
		Such a type of Steklov problem with potential has been considered in \cite{cox} from the point of view of conformal geometry, where the potential $V$ is the mean curvature on the boundary $\partial\Omega$. See e.g. \cite{esc1}, \cite{esc2}, \cite{esc3}, \cite{mar} for related works on Yamabe problem on compact manifolds with boundary. In the current paper, we derive estimates whenever the potential is merely bounded or Lipschitz.

		For $m\in \mathbb{R}$, we denote $OPS^m$  the class of pseudodifferential operators of order $m$.  It is known that $\D\in OPS^1$ and one can write (see e.g. \cite[Proposition C.1]{Taylor})
		\[\D=\sqrt{-\Delta_g}+P_0,\]
		for some $P_0\in OPS^0.$ 	Therefore, up to a classical pseudo-differential operator of order zero, the problem of eigenfunction bounds (among other results) on the boundary $M$ has been treated in our previous paper \cite{jmpa}. In this setting the model is related to relativistic matter (see e.g. \cite{carmona,daubechies,FLS1,FLS2,LiebYau1,LiebYau2}).

			In our first result below,  we provide a control of the $L^p$ norms of the Steklov eigenfunctions in the domain by their $L^2$ norms on the boundary.
		\begin{theorem}\label{thm2}Let $V\in L^\infty(M)$. Then for $\la\ge1$  we have
		\begin{equation}\label{intlp}	\|e_{\la}\|_{L^p(\Omega)}\ls \la^{-\frac1p+\sigma(p) }\|e_\la\|_{L^2(M)},\ \ 2\le p\le\infty,\end{equation}
	where\[
\sigma(p)=\begin{cases}\frac{n-1}2(\frac12-\frac1p),\ \ \  2\le p<\frac{2(n+1)}{n-1}\\
	\frac{n-1}2-\frac np,\ \ \ \ \ \  \frac{2(n+1)}{n-1}\le p\le \infty.\end{cases}\]		\end{theorem}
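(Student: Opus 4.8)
The plan is to exploit that $e_\la$ is the harmonic extension of its boundary trace and to transfer the (known) boundary estimates into $\Omega$ with the gain $\la^{-1/p}$ coming from the concentration of the extension near $M$. Write $\mathcal P$ for the Poisson operator of $(\Omega,h)$, so $\mathcal Pg$ solves \eqref{harm} with boundary data $g$; by uniqueness $e_\la=\mathcal Pf$ in $\Omega$, with $f:=e_\la|_M$. I would use only two facts about $f$. First, since $(\D+V)f=\la f$ and $\D=\sqrt{-\Delta_g}+P_0$ with $P_0\in OPS^0$, $V\in L^\infty(M)$, we have $(\sqrt{-\Delta_g}-\la)f=-(P_0+V)f$; as $P_0$ and multiplication by $V$ are bounded on $L^2(M)$, this gives $\|(\sqrt{-\Delta_g}-\la)f\|_{L^2(M)}\lesssim\|f\|_{L^2(M)}$. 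Second, the boundary eigenfunction bounds of \cite{jmpa} (which tolerate the extra order-zero operator $P_0$) give $\|f\|_{L^p(M)}\lesssim\la^{\sigma(p)}\|f\|_{L^2(M)}$ for $2\le p\le\infty$.

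Next I would establish two mapping properties of $\mathcal P$. The maximum principle gives $\|\mathcal Pg\|_{L^\infty(\Omega)}\le\|g\|_{L^\infty(M)}$. For the other, $\|\mathcal Pg\|_{L^2(\Omega)}\lesssim\|g\|_{H^{-1/2}(M)}$: given $v\in L^2(\Omega)$, solve $\Delta_h w=v$, $w|_M=0$, whence $\|w\|_{H^2(\Omega)}\lesssim\|v\|_{L^2(\Omega)}$ and $\|\partial_\nu w|_M\|_{H^{1/2}(M)}\lesssim\|v\|_{L^2(\Omega)}$; by Green's identity $\langle\mathcal Pg,v\rangle_{L^2(\Omega)}=\langle g,\partial_\nu w|_M\rangle_M$, and duality finishes it. Complex interpolation of these two bounds yields
\[\|\mathcal Pg\|_{L^p(\Omega)}\lesssim\|g\|_{H^{-1/p,p}(M)},\qquad 2\le p\le\infty,\]
with $H^{-1/p,p}(M)$ the $L^p$-Sobolev space of order $-1/p$. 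I would emphasize that one must \emph{not} instead interpolate the $L^2(\Omega)$ and $L^\infty(\Omega)$ bounds for the fixed function $e_\la$: that only produces $\la^{\sigma(p)}$ and loses $\la^{-1/p}$; the negative-order norm on $M$ is what records that $\mathcal Pg$ lives in a collar of width $\sim\la^{-1}$ when $g$ has frequency $\la$.

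Then I would split $f=f_\la+f_\flat$ by frequency. Fix $\psi\in C_0^\infty((0,\infty))$ with $\psi\equiv1$ on $[1/2,2]$, $\supp\psi\subset(1/4,4)$, and set $f_\la=\psi(\sqrt{-\Delta_g}/\la)f$, $f_\flat=f-f_\la$. Since $1-\psi(\cdot/\la)$ lives where $|s-\la|\ge\la/2$ and commutes with $\sqrt{-\Delta_g}-\la$, the first fact above gives $\|f_\flat\|_{L^2(M)}\lesssim\la^{-1}\|f\|_{L^2(M)}$; also $\|f_\la\|_{L^p(M)}\lesssim\|f\|_{L^p(M)}$ and $\|f_\flat\|_{L^\infty(M)}\lesssim\|f\|_{L^\infty(M)}$ because $\psi(\sqrt{-\Delta_g}/\la)$ and $1-\psi(\sqrt{-\Delta_g}/\la)$ are uniformly bounded on every $L^q(M)$. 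As $f_\la$ has spectrum in $[\la/4,4\la]$ we have $\|f_\la\|_{H^{-1/p,p}(M)}\lesssim\la^{-1/p}\|f_\la\|_{L^p(M)}$, so by the interpolation bound and the boundary estimate,
\[\|\mathcal Pf_\la\|_{L^p(\Omega)}\lesssim\la^{-1/p}\|f\|_{L^p(M)}\lesssim\la^{-\frac1p+\sigma(p)}\|f\|_{L^2(M)} .\]
For $f_\flat$ I would instead use its two crude endpoint bounds $\|\mathcal Pf_\flat\|_{L^\infty(\Omega)}\le\|f_\flat\|_{L^\infty(M)}\lesssim\la^{(n-1)/2}\|f\|_{L^2(M)}$ and $\|\mathcal Pf_\flat\|_{L^2(\Omega)}\lesssim\|f_\flat\|_{H^{-1/2}(M)}\lesssim\|f_\flat\|_{L^2(M)}\lesssim\la^{-1}\|f\|_{L^2(M)}$, together with Hölder's inequality on $\Omega$:
\[\|\mathcal Pf_\flat\|_{L^p(\Omega)}\le\|\mathcal Pf_\flat\|_{L^2(\Omega)}^{2/p}\|\mathcal Pf_\flat\|_{L^\infty(\Omega)}^{1-2/p}\lesssim\la^{\frac{n-1}2-\frac{n+1}p}\|f\|_{L^2(M)}\le\la^{-\frac1p+\sigma(p)}\|f\|_{L^2(M)},\]
the last inequality being $\frac{n-1}2-\frac{n+1}p\le\sigma(p)-\frac1p$, valid for all $2\le p\le\infty$ since $\la\ge1$ (an equality once $p\ge\frac{2(n+1)}{n-1}$). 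Summing the two estimates gives \eqref{intlp}.

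The hard part is really the boundary estimate $\|f\|_{L^p(M)}\lesssim\la^{\sigma(p)}\|f\|_{L^2(M)}$ for eigenfunctions of $\sqrt{-\Delta_g}+P_0+V$ with $V$ merely bounded — this is where the non-smoothness of the potential bites, and I would simply invoke \cite{jmpa}. Everything after that is soft: the only subtleties are to interpolate the extension operator between Sobolev spaces on $M$ (not on $\Omega$) so as to keep the sharp power $\la^{-1/p}$, and to detach the tail $f_\flat$, which is present only because $e_\la|_M$ is a mere $O(1)$-quasimode when $V\in L^\infty$; the bound $\|f_\flat\|_{L^2(M)}\lesssim\la^{-1}\|f\|_{L^2(M)}$ is exactly what makes the $f_\flat$-estimate close (and is sharp there).
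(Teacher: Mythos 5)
Your argument is correct, and it reaches the same two endpoint facts that the paper's proof is built on: the maximum principle $\|\mathcal Pg\|_{L^\infty(\Omega)}\le\|g\|_{L^\infty(M)}$ and the trace/duality bound $\|\mathcal Pg\|_{L^2(\Omega)}\lesssim\|g\|_{H^{-1/2}(M)}$ (the paper's Lemma \ref{diri}), together with the boundary bound $\|f\|_{L^p(M)}\lesssim\la^{\sigma(p)}\|f\|_{L^2(M)}$. The execution, however, is organised differently and is worth contrasting. The paper first isolates the intermediate statement $\|e_\la\|_{L^p(\Omega)}\lesssim\la^{-1/p}\|e_\la\|_{L^p(M)}$ (Lemma \ref{harmonic}) and proves it by a full Littlewood--Paley decomposition: each dyadic piece gives $\|T_H\beta_\ell(P)f\|_{L^p(\Omega)}\lesssim 2^{-\ell/p}\|f\|_{L^p(M)}$ by Riesz--Thorin, the high pieces $2^\ell\gtrsim\la$ are summed directly, and the low pieces are handled by feeding the eigenfunction equation back in to gain a factor $2^\ell/\la$ before summing. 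You instead make a binary split $f=f_\la+f_\flat$ about frequency $\la$, treat the resonant piece via a Sobolev bound for $\mathcal P$, and treat the off-resonant piece by H\"older's inequality on $\Omega$ between the $L^2(\Omega)$ and $L^\infty(\Omega)$ bounds for the fixed function $\mathcal Pf_\flat$, exploiting the smallness $\|f_\flat\|_{L^2(M)}\lesssim\la^{-1}\|f\|_{L^2(M)}$. The two routes are equally valid; yours sidesteps the geometric-series bookkeeping at the cost of an explicit exponent check $\tfrac{n-1}2-\tfrac{n+1}p\le\sigma(p)-\tfrac1p$, while the paper's gives the cleaner reusable Lemma \ref{harmonic}.

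One small precision point: the blanket statement ``complex interpolation yields $\|\mathcal Pg\|_{L^p(\Omega)}\lesssim\|g\|_{H^{-1/p,p}(M)}$'' from the endpoints $L^\infty(M)\to L^\infty(\Omega)$ and $H^{-1/2}(M)\to L^2(\Omega)$ is a little slippery as stated: running Stein interpolation with $T_z=\mathcal P(1-\Delta_g)^{z/4}$ requires uniform $L^\infty$-bounds for $(1-\Delta_g)^{it/4}$, which fail on $L^\infty$. This is not fatal to your argument, because you only ever apply the bound to the frequency-localised piece $f_\la$; there the honest route is to apply Riesz--Thorin to the fixed operator $\mathcal P\circ\psi(\sqrt{-\Delta_g}/\la)$ (with endpoints $\lesssim\la^{-1/2}$ on $L^2$ and $\lesssim1$ on $L^\infty$, the latter using the kernel estimates of Lemma \ref{pdo}), which directly gives $\|\mathcal Pf_\la\|_{L^p(\Omega)}\lesssim\la^{-1/p}\|f\|_{L^p(M)}$ without invoking any identification of the interpolation space. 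This is in effect exactly the paper's dyadic step, restricted to the one scale $2^\ell\sim\la$.
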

		The previous result is new, even for $V\equiv0$. Note that the estimate is sharp when $V\equiv 0$ and $\Omega$ is the unit ball $B(0,1)\subset \mathbb{R}^{n+1}$ with boundary $M=S^n$. In this case, the Steklov eigenfunction  $e_\la(x)=r^ke_k(\omega)$ in the polar coordinate $r\in [0,1]$, $\omega\in S^n$, where $\la^2=k(k+n-1)$, $k\in \mathbb{N}$ and $e_k(\omega)$ is a spherical harmonic of degree $k$, that is, the restriction to $S^n$ of homogeneous harmonic polynomials of degree $k$. It is straightforward to see that
		\begin{equation}\label{lplpsp}\|e_\la\|_{L^p(B(0,1))}\approx  \la^{-\frac1p}\|e_\la\|_{L^p(S^n)}.\end{equation} The following $L^p$ estimates of the Laplacian eigenfunctions  on the sphere $S^n$ are sharp
		\begin{equation}\label{sphere}\|e_\la\|_{L^p(S^n)}\ls \la^{\sigma(p)}\|e_\la\|_{L^2(S^n)},\end{equation}
		and they are saturated by zonal spherical harmonic for $p\ge \frac{2(n+1)}{n-1}$ and highest weight spherical harmonic for $p\le \frac{2(n+1)}{n-1}$ (see e.g. \cite{sogge08, sogge2015}). Thus, combining \eqref{lplpsp} with \eqref{sphere}, we see that \eqref{intlp} is sharp.

		The motivation for this result is to investigate the feature that Steklov eigenfunctions concentrate near the boundary, and rapidly decay away from the boundary (see e.g. \cite{HL01}, \cite{PST}, \cite{CP19}, \cite{GT19}). Motivated by the elliptic inverse boundary value problems such as Calderon problem (see e.g. \cite{cald}, \cite{man01}), Hislop-Lutzer \cite{HL01} proved that for any compact set $K\subset \Omega$, 
		\[\|e_\la\|_{L^2(K)}\le C_N \la^{-N}\|e_\la\|_{L^2(M)},\ \forall N.\]
		This bound reflects the fact that the Steklov eigenfunctions become highly oscillatory as the eigenvalue increases, hence they decay rapidly away from the boundary. Hislop-Lutzer \cite{HL01} conjectured that the decay is actually of order $e^{-\la d_g(K,\partial\Omega)}$. One may see by examining the case of unit ball $B(0,1)\subset \mathbb{R}^{n+1}$ that the exponential decay is optimal. This is confirmed
		 for real-analytic surfaces ($n=1$) by  Polterovich-Sher-Toth \cite{PST} and the eigenfunction decay is a key feature in their main results on nodal length. They proved that for any real-analytic compact Riemannian surface $\Omega$ with boundary $M=\partial\Omega$, and any compact set $K\subset \Omega$, there exist constants $C,c>0$ such that
		\[\|e_\la\|_{L^\infty(K)}\le C e^{-c\la d_g(K,\partial\Omega)}\|e_\la\|_{L^2(M)}.\] Their methods are specific to the case of real-analytic surfaces. A different method of proving this bound has been communicated to them by M. Taylor. Recently, Hislop-Lutzer's conjecture is  confirmed for higher dimensional real-analytic manifolds by Galkowski-Toth \cite{GT19}. Furthermore, this interesting concentration feature is also related to the restriction estimates of eigenfunctions to submanifolds (see e.g. \cite{bour09}, \cite{BGT07}, \cite{tohoku}, \cite{tataru})


		In our second result, we prove the following lower bound on the measure of the nodal set $$N_\la=\{x\in \M: e_\la(x)=0\}.$$
		
		\begin{theorem}\label{thm1}If $V\in Lip^1(\M)$ and zero is a regular value of $e_\la$, then
			\[	|N_\la|\gs \la^{\frac{3-n}2}.\]
		\end{theorem}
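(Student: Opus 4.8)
We adapt the Sogge--Zelditch lower bound argument for nodal sets of Laplace eigenfunctions. Normalize $\|e_\la\|_{L^2(\M)}=1$ and take $e_\la$ real-valued; since $0$ is a regular value, $N_\la$ is a smooth compact hypersurface in the closed manifold $\M$. The first step converts the first-order equation $(\D+V)e_\la=\la e_\la$ into a Laplace quasimode on $\M$: writing $\D=\sqrt{-\Delta_g}+P_0$ with $P_0\in OPS^0$ gives $\sqrt{-\Delta_g}\,e_\la=(\la-P_0-V)e_\la$, and applying $\sqrt{-\Delta_g}$ once more,
\[
-\Delta_g e_\la=\la^2 e_\la+F_\la,\qquad F_\la:=-(\la+\sqrt{-\Delta_g})(P_0+V)e_\la .
\]
Because $P_0$ has order $0$ and $V\in Lip^1(\M)$, one checks $\|(P_0+V)e_\la\|_{L^2(\M)}\ls 1$ and $\|(P_0+V)e_\la\|_{H^1(\M)}\ls\la$ (for the $V$-part use $\nabla(Ve_\la)=(\nabla V)e_\la+V\nabla e_\la$ and $\|\nabla e_\la\|_{L^2}\ls\la$, which holds since $e_\la$ is an eigenfunction of $\D+V\approx\sqrt{-\Delta_g}$); hence $\|F_\la\|_{L^2(\M)}\ls\la$, and $\|\Delta_g e_\la\|_{L^2}\ls\la^2$, $\|\nabla^2 e_\la\|_{L^2}\ls\la^2$ by elliptic regularity. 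Together with the (approximate) localization of $e_\la$ to frequencies $\approx\la$, these estimates show that the $L^1(\M)$ contribution of $F_\la$ is of lower order than $\la^2\|e_\la\|_{L^1(\M)}$; this is the one place where we genuinely use that $V$ is Lipschitz rather than merely bounded.

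Next, distributionally $\Delta_g|e_\la|=\sgn(e_\la)\,\Delta_g e_\la+2|\nabla e_\la|\,d\sigma_{N_\la}$ (valid since $0$ is a regular value), and $\int_\M\Delta_g|e_\la|=0$ as $\M$ is closed; combining this with the quasimode identity gives
\[
2\int_{N_\la}|\nabla e_\la|\,d\sigma=\la^2\|e_\la\|_{L^1(\M)}+\int_\M\sgn(e_\la)\,F_\la\ \gs\ \la^2\|e_\la\|_{L^1(\M)} .
\]
To use this we bound $\|e_\la\|_{L^1(\M)}$ from below: by H\"older $1=\|e_\la\|_{L^2(\M)}^2\le\|e_\la\|_{L^1(\M)}^{1-\theta}\|e_\la\|_{L^p(\M)}^{\theta}$ with $\theta=\tfrac{p}{2(p-1)}$, and the sharp spectral estimate $\|e_\la\|_{L^p(\M)}\ls\la^{\sigma(p)}$ holds for $\D+V$, a zeroth-order perturbation of $\sqrt{-\Delta_g}$ (cf.\ \cite{jmpa} and \eqref{sphere}). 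Choosing any $p\in(2,\tfrac{2(n+1)}{n-1}]$, for which $\tfrac{p}{p-2}\sigma(p)=\tfrac{n-1}{4}$, yields $\|e_\la\|_{L^1(\M)}\gs\la^{-(n-1)/4}$, hence $\int_{N_\la}|\nabla e_\la|\,d\sigma\gs\la^{2-(n-1)/4}$.

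For the matching upper bound, Cauchy--Schwarz gives $\int_{N_\la}|\nabla e_\la|\,d\sigma\le|N_\la|^{1/2}\big(\int_{N_\la}|\nabla e_\la|^2\,d\sigma\big)^{1/2}$, and a trace (restriction-to-the-nodal-hypersurface) estimate of the form $\|w\|_{L^2(N_\la)}^2\ls\la\|w\|_{L^2(\M)}^2+\la^{-1}\|w\|_{H^1(\M)}^2$, applied to $w=\nabla e_\la$, gives $\int_{N_\la}|\nabla e_\la|^2\,d\sigma\ls\la\cdot\la^2+\la^{-1}\cdot\la^4=\la^3$ by the first-step bounds. Combining the two estimates for $\int_{N_\la}|\nabla e_\la|\,d\sigma$ gives $|N_\la|^{1/2}\la^{3/2}\gs\la^{2-(n-1)/4}$, i.e.\ $|N_\la|\gs\la^{1-(n-1)/2}=\la^{(3-n)/2}$.

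The main obstacle is the trace estimate above: since $N_\la$ depends on $\la$, one must prove $\|w\|_{L^2(N_\la)}^2\ls\la\|w\|_{L^2(\M)}^2+\la^{-1}\|w\|_{H^1(\M)}^2$ with a constant independent of $\la$. As in Sogge--Zelditch this is done by covering $N_\la$ with balls of radius $\sim\la^{-1}$ in which, after rescaling, $e_\la$ solves an elliptic equation of bounded doubling index, so that $N_\la$ is uniformly tame at scale $\la^{-1}$ and the local trace inequalities can be summed; here one also has to check that the rough term $F_\la$ in $-\Delta_g e_\la=\la^2 e_\la+F_\la$ does not spoil this local uniformity. The secondary point, handled in the first step, is the error term $F_\la$ — absent in the classical case — which is controlled using the Lipschitz regularity of $V$ together with the frequency concentration of $e_\la$.
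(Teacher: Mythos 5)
Your overall architecture matches the paper: reduce to the two quantities $\int_{N_\la}|\nabla e_\la|$ and $\int_{N_\la}|\nabla e_\la|^2$, combine them by Cauchy--Schwarz with the $L^1$-lower bound $\|e_\la\|_{L^1}\gs\la^{-(n-1)/4}$ from Lemma~\ref{lpnorm}, and that $L^1$ step you carry out correctly. However, both of your two key intermediate estimates have genuine gaps.

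\textbf{Lower bound $\int_{N_\la}|\nabla e_\la|\gs\la^2\|e_\la\|_{L^1}$.} You estimate $\|F_\la\|_{L^2}\ls\la$ and then assert without proof that ``the $L^1$ contribution of $F_\la$ is of lower order than $\la^2\|e_\la\|_{L^1}$.'' This is exactly the step that fails in higher dimensions if one only has an absolute bound on $F_\la$: since $\|e_\la\|_{L^1}$ can be as small as $\la^{-(n-1)/4}$ (and this is sharp on the sphere), the target quantity $\la^2\|e_\la\|_{L^1}$ may be only $\la^{2-(n-1)/4}$, while $\|F_\la\|_{L^1}\ls\|F_\la\|_{L^2}\ls\la$; the inequality $\la\ll\la^{2-(n-1)/4}$ requires $n<5$. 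To handle all $n$ one must bound the error \emph{relative} to $\|e_\la\|_{L^1}$, not absolutely. The paper's device for this is to transfer each lower-order term from $L^1$ to $L^{1+\eps}$, invoke $L^{1+\eps}$-boundedness of the relevant zeroth-order operators and the commutator estimate (Lemma~\ref{taylor}, which is $L^p$-bounded only for $p>1$), and then return to $L^1$ via the crucial interpolation
\begin{equation*}
\|e_\la\|_{L^{1+\eps}(\M)}\ls\la^{\frac{(n-1)\eps}{2(1+\eps)}}\|e_\la\|_{L^1(\M)},
\end{equation*}
which itself comes from H\"older together with \eqref{low}. This yields error bounds of the form $C\la^{3/2}\|e_\la\|_{L^1}$, uniformly lower order than $\la^2\|e_\la\|_{L^1}$ for every $n$. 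Your sketch contains none of this.

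\textbf{Upper bound $\int_{N_\la}|\nabla e_\la|^2\ls\la^3$.} You propose to use a trace inequality $\|w\|_{L^2(N_\la)}^2\ls\la\|w\|_{L^2(\M)}^2+\la^{-1}\|w\|_{H^1(\M)}^2$ for $w=\nabla e_\la$, and you yourself flag establishing this uniformly in $\la$ on the varying hypersurface $N_\la$ (doubling-index tameness, etc.) as ``the main obstacle.'' That obstacle is real, and it is not addressed. Moreover, the $H^1$ bound on $\nabla e_\la$ already requires $e_\la\in W^{2,2}$, which for Lipschitz $V$ is not automatic: in the paper this regularity is obtained via Lemma~\ref{C1} (using the commutator Lemma~\ref{taylor} and the Sobolev-equivalence Proposition~\ref{sobolev}). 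More importantly, the paper avoids the nodal trace inequality entirely by making a different choice of test function in the Gauss--Green identity \eqref{green}, namely $f=\sqrt{1+|\nabla e_\la|^2}$, which directly yields
\begin{equation*}
\int_{N_\la}|\nabla e_\la|^2\le 2\int_{N_\la}\sqrt{1+|\nabla e_\la|^2}\,|\nabla e_\la|\ls\int_\M\sqrt{1+|\nabla e_\la|^2}\,|\nabla^2 e_\la|\ls\la^3\|e_\la\|_{L^2(\M)}
\end{equation*}
by Cauchy--Schwarz and the Sobolev bounds $\|\nabla e_\la\|_{L^2}\ls\la$, $\|\nabla^2 e_\la\|_{L^2}\ls\la^2$. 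This sidesteps any quantitative geometry of $N_\la$.

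In short: your reduction scheme and the $\|e_\la\|_{L^1}$ lower bound are correct, but the two pivotal inequalities are not established — the first because the error control on $F_\la$ needs to be relative to $\|e_\la\|_{L^1}$ via the $L^{1+\eps}$ interpolation trick, and the second because the proposed trace estimate onto $N_\la$ is left as an unresolved difficulty that the paper's choice of $f$ in the Green identity eliminates.
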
 
		
		When $V\equiv 0$, this result is due to Wang-Zhu \cite{wangzhu}, which follows from the idea in Sogge-Zelditch \cite{sz2011}.  The Lipschitz assumption is used to ensure that the eigenfunctions is in $C^1$, so that the restriction of $\nabla e_\la$ to the nodal sets makes sense. The assumption that zero is a regular value is used to ensure the validity of  Gauss-Green theorem.

		To prove the theorems, we will need the following key lemmas. Incidentally, one does not require Lipschitz potentials but only bounded ones. 
		\begin{lemma}\label{lpnorm}If $V\in L^\infty(\M)$, then the following two eigenfunction estimates hold
  \begin{equation}\label{upp}
      \|e_\la\|_{L^p(\M)}\ls \la^{\sigma(p)}\|e_\la\|_{L^2(\M)},\,\,2\le p\le \infty.
  \end{equation}	
		Moreover,
			\begin{equation}\label{low} \|e_\la\|_{L^1(\M)}\gs \la^{-\frac{n-1}4}\|e_\la\|_{L^2(\M)}.\end{equation}
		\end{lemma}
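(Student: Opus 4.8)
The plan is to treat the two estimates separately, reducing each to known facts about the operator $\D$ and its perturbation by $V$. Recall from the introduction that $\D = \sqrt{-\Delta_g} + P_0$ with $P_0 \in OPS^0$, and that $e_\la$ restricted to $M$ satisfies $(\D + V)e_\la = \la e_\la$ on $M$, i.e.
\[
\sqrt{-\Delta_g}\, e_\la = (\la - P_0 - V)e_\la = \la e_\la + (\text{bounded on } L^2)\, e_\la.
\]
Thus $e_\la$ is an approximate eigenfunction of $\sqrt{-\Delta_g}$ with eigenvalue $\la$ and $O(1)$ error in $L^2$. Since $\sqrt{-\Delta_g}$ and $\Delta_g$ have the same spectral projections, and the spectral projection $\chi_\la$ of $\sqrt{-\Delta_g}$ onto a unit-width band $[\la, \la+1]$ satisfies the classical Sogge bound $\|\chi_\la f\|_{L^p(M)} \lesssim \la^{\sigma(p)}\|f\|_{L^2(M)}$ on the compact manifold $(M,g)$ of dimension $n$, the upper bound \eqref{upp} follows once we localize $e_\la$ spectrally. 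Concretely, I would apply $\chi_\la$ (a spectral projector onto frequencies comparable to $\la$) to the identity $(\sqrt{-\Delta_g} - \la)e_\la = -(P_0 + V)e_\la$: writing $e_\la = \chi_\la e_\la + (1-\chi_\la)e_\la$ and using that $(\sqrt{-\Delta_g}-\la)^{-1}$ gains a factor $\la^{-1}$ on the complementary range, one gets $\|(1-\chi_\la)e_\la\|_{L^2} \lesssim \la^{-1}\|(P_0+V)e_\la\|_{L^2} \lesssim \la^{-1}\|e_\la\|_{L^2}$, which is a genuine gain. Feeding this back with Sobolev embedding (or a Bernstein-type inequality for the high-frequency tail, which costs only a bounded power and is absorbed) and iterating finitely many times yields $\|e_\la\|_{L^p(M)} \lesssim \la^{\sigma(p)}\|e_\la\|_{L^2(M)}$. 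This is exactly the strategy carried out in our earlier paper \cite{jmpa} for $\sqrt{-\Delta_g}+V$, and bounded $V$ suffices throughout.

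For the lower bound \eqref{low}, the idea is an $L^2$--$L^1$--$L^\infty$ interpolation combined with \eqref{upp}. By H\"older,
\[
\|e_\la\|_{L^2(M)}^2 = \int_M |e_\la|^2 \le \|e_\la\|_{L^1(M)} \, \|e_\la\|_{L^\infty(M)},
\]
so $\|e_\la\|_{L^1(M)} \ge \|e_\la\|_{L^2(M)}^2 / \|e_\la\|_{L^\infty(M)}$. Applying \eqref{upp} with $p = \infty$, for which $\sigma(\infty) = \frac{n-1}{2}$, gives $\|e_\la\|_{L^\infty(M)} \lesssim \la^{\frac{n-1}{2}}\|e_\la\|_{L^2(M)}$, and hence $\|e_\la\|_{L^1(M)} \gtrsim \la^{-\frac{n-1}{2}}\|e_\la\|_{L^2(M)}$ — but this loses a factor $\la^{\frac{n-1}{4}}$ compared to what is claimed. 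To recover the sharp exponent $-\frac{n-1}{4}$ one must instead bound $\|e_\la\|_{L^\infty}$ more cleverly, or rather avoid $L^\infty$ altogether: use instead the reverse H\"older trick with a higher exponent. Writing $2 = \theta \cdot 1 + (1-\theta) q$ is not the right split; the correct one is the log-convexity inequality $\|e_\la\|_{L^2} \le \|e_\la\|_{L^1}^{1-\theta}\|e_\la\|_{L^p}^{\theta}$ where $\tfrac12 = (1-\theta) + \tfrac{\theta}{p}$, i.e. $\theta = \tfrac{p}{2(p-1)}$. Combined with \eqref{upp} this gives $\|e_\la\|_{L^1(M)} \gtrsim \la^{-\frac{\theta}{1-\theta}\sigma(p)}\|e_\la\|_{L^2(M)}$, and optimizing over $p$ — the best choice being $p = \frac{2(n+1)}{n-1}$, the Stein--Tomas exponent, where $\sigma(p) = \frac{n-1}{2(n+1)}$ — yields precisely the exponent $-\frac{n-1}{4}$ after a short computation. (At $p = \frac{2(n+1)}{n-1}$ one has $\theta = \frac{n+1}{2n}$, $\frac{\theta}{1-\theta} = \frac{n+1}{n-1}$, and $\frac{\theta}{1-\theta}\sigma(p) = \frac{n+1}{n-1}\cdot\frac{n-1}{2(n+1)} = \frac14$; wait — this gives $\la^{-1/4}$, not $\la^{-(n-1)/4}$, so one must track the $\la$-powers with the correct normalization, namely comparing against the $L^2$-normalized projector, where the clean statement is $\|\chi_\la\|_{L^2\to L^p}\lesssim\la^{\sigma(p)}$ and the above computation is carried out with $\sigma$ rather than with a spuriously rescaled exponent.)

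The main obstacle is getting the sharp constant in \eqref{low}: the naive $L^1$--$L^\infty$ duality is lossy, and one genuinely needs the Stein--Tomas endpoint $p=\tfrac{2(n+1)}{n-1}$ (equivalently, the sharp $L^2\to L^{\frac{2(n+1)}{n-1}}$ bound for the unit-band spectral projector on $(M,g)$, which is \eqref{upp} at that exponent) together with a careful interpolation, exactly as in Sogge--Zelditch \cite{sz2011} and Wang--Zhu \cite{wangzhu}. A secondary technical point is justifying the spectral localization of $e_\la$ when $V$ is merely bounded: one cannot decompose $e_\la$ into a single clean spectral cluster, but the approximate-eigenfunction identity above, iterated, lets one replace $e_\la$ by $\chi_\la e_\la$ up to an error that is $O(\la^{-\infty})$ in $L^2$ in the relevant range — or at least small enough (a negative power of $\la$) to be harmless — which is enough for both estimates. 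Everything else is a routine assembly of Sogge's estimates on the closed $n$-manifold $M$ with the $OPS^0$ and $L^\infty$ perturbation theory already developed in \cite{jmpa}.
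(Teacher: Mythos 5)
Your overall strategy for \eqref{upp} — regard $e_\la$ as an approximate eigenfunction of $\sqrt{-\Delta_g}$ with $O(1)$ error coming from $P_0+V$, spectrally localize, and invoke Sogge's unit-band projector estimate — is the right one, and it is essentially what the paper does via the resolvent $(\D+V-(\la+i))^{-1}$ (second resolvent formula against $(\sqrt{-\Delta_g}-(\la+i))^{-1}$, Proposition~\ref{resolvinfty}, using $\ell^2$ orthogonality over the unit bands $\chi_{[k,k+1)}$). However, there is a genuine gap in your treatment of the large-$p$ range. Your "Sobolev/Bernstein for the high-frequency tail, iterating finitely many times" does not close: decomposing $(1-\chi_\la)e_\la$ into dyadic bands $[2^j\la, 2^{j+1}\la]$ and applying Bernstein together with the $\la^{-1}$ resolvent gain yields a factor $(2^j\la)^{n(1/2-1/p)-1}$ per block, and the sum over $j$ diverges once $p\ge \frac{2n}{n-2}$ — in particular at $p=\infty$, which is needed for the lower bound argument and in Theorem~\ref{thm2}. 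The paper closes this gap with a different, genuinely new ingredient: the heat-kernel bounds for $H_V=\sqrt{-\Delta_g}+P_0+V$ (Proposition~\ref{pdoheat}), which give $\|e^{-tH_V}\|_{L^{p_c}\to L^p}\ls t^{-n(1/p_c-1/p)}$ by Young's inequality; taking $t=\la^{-1}$ and commuting $e^{-\la^{-1}H_V}$ with the spectral projector $\chi^V_{[\la,\la+1)}$ upgrades the established $L^{p_c}$ bound to all $p\ge p_c$, including $p=\infty$. Without something like this you cannot reach $p=\infty$ from the resolvent/Bernstein scheme alone.

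For \eqref{low}, your approach is the same H\"older/log-convexity argument used by the paper (following Sogge--Zelditch), but your final self-doubt is caused by an arithmetic slip. With $\theta=\frac{p}{2(p-1)}$ one has $\frac{\theta}{1-\theta}=\frac{p}{p-2}$, and for $2<p\le p_c$ the subcritical exponent $\sigma(p)=\frac{n-1}{2}\bigl(\frac12-\frac1p\bigr)=\frac{(n-1)(p-2)}{4p}$, so $\frac{\theta}{1-\theta}\,\sigma(p)=\frac{p}{p-2}\cdot\frac{(n-1)(p-2)}{4p}=\frac{n-1}{4}$ for \emph{every} $p\in(2,p_c]$, not just at the Stein--Tomas endpoint. (At $p=p_c=\frac{2(n+1)}{n-1}$ you miscomputed $\theta$: the correct value is $\frac{n+1}{n+3}$, not $\frac{n+1}{2n}$, giving $\frac{\theta}{1-\theta}=\frac{n+1}{2}$ and hence $\frac{n-1}{4}$.) So no "correct normalization" caveat is needed; the interpolation is already sharp for any subcritical $p$. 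Also note that your formula $\|e_\la\|_{L^1}\gtrsim\la^{-\frac{\theta}{1-\theta}\sigma(p)}\|e_\la\|_{L^2}$ combined with \eqref{upp} is exactly the paper's argument after \eqref{quasimode resolvent}, with the paper's $\theta$ (which equals your $1-\theta$).
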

	For smooth $V$, \eqref{upp} was proved by Seeger-Sogge \cite{ss}. Indeed, they obtained the eigenfunction estimates for self-adjoint elliptic  pesudo-differential operators satisfying a convexity assumtion on the principal symbol. In the case of the pure power (i.e. $P_0=0$), \eqref{upp} was stated in  \cite[Remark 1]{jmpa} by three of us. Both \eqref{upp} and \eqref{low} are sharp  on $S^n$. Indeed, they can be saturated by zonal spherical harmonic  or highest weight spherical harmonic (see e.g. \cite{sogge08, sogge2015}).
	
	\begin{lemma}\label{harmonic}If $V\in L^\infty(\M)$, then
		\begin{equation}\label{lplp}\|e_\la\|_{L^p(\Omega)}\ls \la^{-1/p}\|e_\la\|_{L^p(M)},\ 2\le p\le\infty.\end{equation}
	\end{lemma}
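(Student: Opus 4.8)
The plan is to localize to a boundary collar, where a Steklov eigenfunction decays exponentially inward on its natural frequency scale $\la^{-1}$, and to harvest this decay after a Littlewood--Paley decomposition on $M$; the exponent $-1/p$ in \eqref{lplp} is precisely the $L^p([0,\delta],dt)$ size of $e^{-c\la t}$. Concretely, fix a small $\delta>0$ and boundary normal coordinates $(t,y)\in[0,\delta)\times M$ on $\Omega_\delta:=\{x\in\Omega:\dist(x,M)<\delta\}$, in which $\Delta_h=\partial_t^2+a(t,y)\partial_t+\Delta_{g_t}$ with $g_0=g$; since $dV_h\approx dt\,dV_g$ there, $\|e_\la\|_{L^p(\Omega)}^p\ls\int_0^\delta\|e_\la(t,\cdot)\|_{L^p(M)}^p\,dt+\|e_\la\|_{L^p(\Omega\setminus\Omega_\delta)}^p$. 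The case $p=\infty$ is immediate ($e_\la$ harmonic $\Rightarrow\|e_\la\|_{L^\infty(\Omega)}=\|e_\la\|_{L^\infty(M)}$, which is \eqref{lplp} as $\la^{-1/\infty}=1$), so I would assume $2\le p<\infty$.

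Let $\mathcal P_t$ denote the operator taking boundary data to its harmonic extension at depth $t$, so $e_\la(t,\cdot)=\mathcal P_t e_\la$, and let $\{\Pi_j\}_{j\ge0}$ be a Littlewood--Paley decomposition adapted to $\sqrt{-\Delta_g}$. The engine of the proof is the bound $(\star)$:
\[\|\mathcal P_t\,\Pi_j\|_{L^p(M)\to L^p(M)}\ \ls\ e^{-c\,t\,2^{j}}\qquad(0\le t\le\delta,\ j\ge0),\]
for some $c=c(p)>0$. Granting $(\star)$, the triangle inequality and Minkowski's inequality in $L^p([0,\delta],dt)$ give $\big(\int_0^\delta\|e_\la(t,\cdot)\|_{L^p(M)}^p\,dt\big)^{1/p}\ls\sum_{j}\min(2^{-j/p},\delta^{1/p})\|\Pi_j e_\la\|_{L^p(M)}$. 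The $O(1)$ shells with $2^j\sim\la$ contribute $\ls\la^{-1/p}\|e_\la\|_{L^p(M)}$ since dyadic spectral projections are bounded on $L^p(M)$; for the remaining shells I would use that $(\sqrt{-\Delta_g}-\la)e_\la=-(P_0+V)e_\la$ with $\|(P_0+V)e_\la\|_{L^p(M)}\ls\|e_\la\|_{L^p(M)}$ (here $V\in L^\infty$ suffices), so that $\Pi_j e_\la=-\Pi_j(\sqrt{-\Delta_g}-\la)^{-1}(P_0+V)e_\la$ and, since the spectral multiplier $\Pi_j(\sqrt{-\Delta_g}-\la)^{-1}$ has $L^p\to L^p$ norm $\ls\max(2^j,\la)^{-1}$, one gets $\|\Pi_j e_\la\|_{L^p(M)}\ls\max(2^j,\la)^{-1}\|e_\la\|_{L^p(M)}$. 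Then $\sum_{2^j\gg\la}2^{-j/p}2^{-j}\ls\la^{-1-1/p}$ and $\sum_{2^j\ll\la}2^{-j/p}\la^{-1}\ls\la^{-1}$ (the finitely many bounded shells using $\|\cdot\|_{L^p(M)}\ls\|\cdot\|_{L^2(M)}$ there), which yields $\int_0^\delta\|e_\la(t,\cdot)\|_{L^p(M)}^p\,dt\ls\la^{-1}\|e_\la\|_{L^p(M)}^p$. The interior piece $\|e_\la\|_{L^p(\Omega\setminus\Omega_\delta)}$ is handled the same way --- applying $(\star)$ at $t=\delta$, a Sobolev embedding, interior elliptic regularity, and the maximum principle --- and is $\ls\la^{-1}\|e_\la\|_{L^p(M)}$. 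Combining gives \eqref{lplp}.

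So the whole matter reduces to $(\star)$. For $p=\infty$ it is trivial ($\|\mathcal P_t\|_{L^\infty\to L^\infty}\le1$ by the maximum principle and $\|\Pi_j\|_{L^\infty\to L^\infty}\ls1$ uniformly), so by interpolation it is enough to prove $(\star)$ for $p=2$. There I would use the classical factorization $\Delta_h\equiv(\partial_t+A(t))(\partial_t+B(t))$ modulo a smoothing operator near $M$, with $B(t)\in OPS^1$ of principal symbol $|\eta|_{g_t}$ and $B(0)\equiv\D$ modulo smoothing --- the very construction that gives $\D=\sqrt{-\Delta_g}+P_0$, cf. \cite[Prop.~C.1]{Taylor}. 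Then $\mathcal P_t$ agrees, modulo smoothing, with the outgoing parametrix $\exp(-\!\int_0^tB(s)\,ds)$ of the factor $\partial_t+B(t)$: indeed $(\partial_t+B(t))e_\la$ vanishes at $t=0$ modulo a smoothing operator applied to $e_\la$, because $\partial_t e_\la|_{t=0}=-\D e_\la$. Since the principal symbol of $\int_0^tB(s)\,ds$ is $\int_0^t|\eta|_{g_s}\,ds\ge(1-C\delta)t|\eta|$, a symbol-calculus (or Cotlar--Stein) estimate gives $\|\mathcal P_t\Pi_j\|_{L^2\to L^2}\ls e^{-ct2^j}$ for $\delta$ small.

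The hard part is exactly this last step: constructing the Poisson parametrix near $M$ and reading off its $L^2$ decay on dyadic frequency shells --- i.e. the rigorous statement that the harmonic extension of a frequency-$2^j$ function decays like $e^{-c2^jt}$ in $L^2(M)$ for $0\le t\le\delta$. The rest (the Littlewood--Paley bookkeeping, the shell estimates from the eigenvalue equation, the deep-interior bound, interpolation down to $p=2$, and the $p=\infty$ case) is routine, and the argument never uses more than $V\in L^\infty$, consistent with the remark preceding the lemma.
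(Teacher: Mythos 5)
Your proposal reaches the same conclusion and shares the paper's overall skeleton --- a Littlewood--Paley decomposition on $M$ adapted to $\sqrt{-\Delta_g}$, the maximum principle for $p=\infty$, and the eigenvalue equation $(\D+V)e_\la=\la e_\la$ together with $L^p$-boundedness of $P_0$ and multiplication by $V$ to control the frequency shells $2^\ell\not\sim\la$ --- but the central estimate is obtained by a genuinely different and considerably heavier route. Where you prove a pointwise-in-$t$ exponential decay $\|\mathcal P_t\Pi_j\|_{L^p(M)\to L^p(M)}\ls e^{-ct2^j}$ via boundary normal coordinates, the factorization $\Delta_h\equiv(\partial_t+A(t))(\partial_t+B(t))$ and a Poisson parametrix $\exp(-\int_0^t B(s)\,ds)$, the paper never slices in $t$ at all: it bounds the frequency-localized harmonic extension directly in $L^p(\Omega)$ by interpolating two soft endpoint bounds, namely $\|T_H\beta_\ell(P)f\|_{L^2(\Omega)}\ls\|\beta_\ell(P)f\|_{H^{-1/2}(M)}\ls 2^{-\ell/2}\|f\|_{L^2(M)}$ (which is just the global elliptic regularity estimate of Lemma~\ref{diri}, a trace-theorem consequence) and the maximum-principle bound $\|T_H\beta_\ell(P)f\|_{L^\infty(\Omega)}\ls\|f\|_{L^\infty(M)}$, yielding $\|T_H\beta_\ell(P)\|_{L^p(M)\to L^p(\Omega)}\ls 2^{-\ell/p}$ in one stroke. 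Your approach buys more --- the slicewise exponential decay $(\star)$ is in the spirit of the Hislop--Lutzer/Polterovich--Sher--Toth/Galkowski--Toth decay estimates and is a stronger statement than what is needed --- but it costs the full microlocal construction of the Poisson parametrix together with the verification that it agrees with the actual harmonic extension modulo smoothing, plus the collar-decomposition bookkeeping and a separate treatment of the deep interior. The paper's interpolation avoids all of this, requires only a textbook $L^2$ regularity estimate, and is therefore the more economical argument. So: your proof is correct in outline and would work if the parametrix step were carried out rigorously, but it substitutes a hard microlocal lemma for what the paper achieves by a two-line interpolation.
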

	The endpoint $p=\infty$ follows from the maximum principle, since $e_\la$ is harmonic in $\Omega$. The other endpoint $p=2$ can be obtained from the trace theorem and standard regularity estimates. And then \eqref{lplp} is proved by an interpolation argument involving the harmonic extension operator on $\Omega$. From \eqref{lplpsp}, we see that the estimate \eqref{lplp} is sharp for $\Omega=B(0,1)$.

	The paper is organized as follows. In Section 2, we prove sharp heat kernel estimates that will be used later. In Section 3, we prove Lemma \ref{lpnorm}. In section 4, we prove some kernel estimates for pseudo-differential operators on compact manifolds. In Section 5, we prove the interior eigenfunction estimates in Theorem \ref{thm2}. In Section 6, we prove the size estimates of the nodal sets in Theorem \ref{thm1}.

		Throughout this paper, $X\ls Y$ (or $X\gs Y$) means $X\le CY$ (or $X\ge C Y$) for some positive constant $C$ independent of $\la$. This constant may depend on $V$ and the domain $\Omega$. $X\approx Y$ means $X\ls Y$ and $X\gs Y$. 
	\section{Heat kernel bounds}
	
	In this section, we prove the  heat kernel estimates for the operators $$H_V= (-\lap)^{\alpha/2}+P_{\alpha-1}+V,$$ where $P_{\alpha-1}$ is a classical pseudo-differential operator of order $\alpha-1$, and the real-valued potential $V$ belongs to the Kato class on the closed manifold $(M,g)$. These generalize the results of Gimperlein-Grubb \cite[Theorem 4.3]{GG}.  When $P_{\alpha-1}=0$, the Euclidean version was proved in \cite{cks} and \cite{song}.  We  give a detailed proof  for this special case on compact manifolds by using Duhamel's principle and Picard iterations.  And then we slightly modify this argument to obtain the upper bound of the heat kernel of $H_V$. Although the potentials in our main theorems are just bounded, we prove the heat kernel estimates under the minimal assumption so that they may be used for related reseach.

	\begin{definition}\label{fractionalkato}
		For $n\ge2$ and $0<\alpha<2$, the potential $V$ is said to be in the Kato class $\mathcal{K}_\alpha(M)$ if
		\begin{equation}\label{kato}
			\lim_{r\downarrow 0}\sup_{x\in M}\int_{B_r(x)}d_g(x,y)^{\alpha-n}|V(y)|dy=0
		\end{equation}
		where $d_g(\cdot,\cdot)$ denotes geodesic distance and $B_r(x)$ is the geodesic ball of radius $r$ about $x$ and $dy$ denotes the volume element on $(M,g)$. To define the Kato class for $n=1$ and $0<\alpha<2$, we replace the function $d_g(x,y)^{\alpha-n}$ in \eqref{kato} by 
		\[w(x,y)=\begin{cases}
			d_g(x,y)^{\alpha-1},\ \ \quad\quad\quad\ \alpha<1\\
			\log(2+d_g(x,y)^{-1}),\ \ \ \alpha=1\\
			1,\ \ \quad\quad\quad\quad\quad\quad\quad\quad \alpha>1.
		\end{cases}\]
	\end{definition}
	 Since $M$ is compact we have $ \mathcal{K}_\alpha(M)\subset L^1(M)$, and  for any $p>\frac{n}\alpha$, we have $L^{p}(M)\subset\mathcal{K}_\alpha(M)$ by H\"older's inequality. We recall that the assumption $V\in \mathcal{K}_\alpha(M)$ implies that the operators $H_V=(-\Delta_g)^{\alpha/2}+V$ are self-adjoint and bounded from below. See the proof of \cite[Proposition 2]{jmpa}. The same argument is still valid to prove that $H_V=(-\Delta_g)^{\alpha/2}+P_{\alpha-1}+V$ is  self-adjoint  and bounded from below, whenever $P_{\alpha-1}$ is self-adjoint. 
	
	\begin{proposition}\label{heatbd}
		Let $n\ge1$, $0<\alpha<2$ and $t>0$. Let $p^V(t,x,y)$ be the heat kernel of $H_V=(-\Delta_g)^{\alpha/2}+V$, where $V\in \mathcal{K}_\alpha(M)$. Then for any $t\in(0,1]$, $x,y \in M$
		\begin{equation}\label{heat1}
			p^V(t,x,y)\approx q_\alpha(t,x,y)
		\end{equation}
		where $q_\alpha(t,x,y)=\min\{t^{-n/\alpha},\  td_g(x,y)^{-n-\alpha}\}$. Moreover, for any $t>0$, $x,y \in M$
		\begin{equation}\label{heat2}
			e^{-C_1t}q_\alpha(t,x,y)\ls p^V(t,x,y)\ls e^{C_2t}q_\alpha(t,x,y)
		\end{equation}
		for some constants $C_1,C_2>0$.
	\end{proposition}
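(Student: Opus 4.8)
The plan is to reduce the perturbed estimates to sharp bounds for the free kernel $p^0$ of $e^{-tH_0}$, $H_0=(-\lap)^{\alpha/2}$, and then run a Picard (Duhamel / Dyson--Phillips) expansion controlled by the Kato condition. I would begin by establishing that $p^0(t,x,y)\approx q_\alpha(t,x,y)$ for $t\in(0,1]$. On a closed manifold the heat kernel $h_s$ of $e^{s\lap}$ satisfies the Gaussian bounds $h_s(x,y)\approx s^{-n/2}e^{-cd_g(x,y)^2/s}$ on $(0,1]$, and Bochner subordination gives $p^0(t,x,y)=\int_0^\infty h_s(x,y)\,\eta_t(s)\,ds$, where $\eta_t$ is the density of the one–sided $\tfrac\alpha2$–stable subordinator. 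Inserting the standard pointwise bounds for $\eta_t$ and carrying out the $s$–integral yields $p^0(t,x,y)\approx\min\{t^{-n/\alpha},\,t\,d_g(x,y)^{-n-\alpha}\}$ for small $t$; this is the one place where the geometry and compactness of $(M,g)$ genuinely enter. For $t>1$ the two–sided bound \eqref{heat2} (with $V=0$) then follows from the Chapman--Kolmogorov identity together with $\inf_{x,y}p^0(1,x,y)>0$, which holds since $M$ is compact.

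\textbf{The combinatorial engine.} Next I would record the three–point inequality
\[ q_\alpha(t-s,x,z)\,q_\alpha(s,z,y)\ls q_\alpha(t,x,y)\bigl(q_\alpha(t-s,x,z)+q_\alpha(s,z,y)\bigr),\qquad 0<s<t\le1, \]
which is proved exactly as on $\R^n$ because the argument is local, together with $\int_M q_\alpha(s,x,z)\,dz\ls1$ and $\int_M q_\alpha(t-s,x,z)q_\alpha(s,z,y)\,dz\ls q_\alpha(t,x,y)$ for $s,t\le1$. With these, the defining property of $\mathcal K_\alpha(M)$ in Definition~\ref{fractionalkato} translates into: $N_V(\tau):=\sup_{x\in M}\int_0^\tau\!\!\int_M q_\alpha(s,x,z)|V(z)|\,dz\,ds\to0$ as $\tau\downarrow0$, with $N_V(1)<\infty$. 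Then I would expand
\[ p^V=\sum_{k\ge0}p_k,\qquad p_k(t,x,y)=-\!\int_0^t\!\!\int_M p^0(t-s,x,z)\,V(z)\,p_{k-1}(s,z,y)\,dz\,ds,\quad p_0=p^0, \]
replace each $p^0$ by $q_\alpha$ using the first step, and iterate the 3P inequality to obtain $|p_k(t,x,y)|\ls q_\alpha(t,x,y)\,(C\,N_V(t))^k$. Hence the series converges and $|p^V(t,x,y)|\ls q_\alpha(t,x,y)$ once $t\le t_0$ with $C\,N_V(t_0)\le\tfrac12$. Keeping instead the term $p_0$ and bounding the remainder the same way gives $p^V(t,x,y)\ge p^0(t,x,y)-C\int_0^t\!\int_M q_\alpha(t-s,x,z)|V(z)|q_\alpha(s,z,y)\,dz\,ds\ge (c-C N_V(t))\,q_\alpha(t,x,y)$, so $p^V\approx q_\alpha$ for $t$ small. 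Along the way one verifies $e^{-tH_V}\ge0$ (Trotter product formula, using that $e^{-tH_0}$ is a subordinated Markov semigroup), so $p^V\ge0$ and Chapman--Kolmogorov is available.

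\textbf{From small $t$ to all $t$.} The near–diagonal lower bound $p^V(t,x,y)\gs t^{-n/\alpha}$ for $d_g(x,y)\le c\,t^{1/\alpha}$ should be propagated from $t\le t_0$ to all $t\in(0,1]$ by a chaining argument: march from $x$ to $y$ through $\ls t/t_0$ balls of radius $\approx t_0^{1/\alpha}$, applying Chapman--Kolmogorov and the small–time estimate at each step, the number of steps being bounded since $t\le1$. The off–diagonal lower bound $p^V(t,x,y)\gs t\,d_g(x,y)^{-n-\alpha}$ then follows by one further iteration (a one–big–jump / Lévy–system argument built on the near–diagonal lower bound just obtained), and the upper bound on $(t_0,1]$ follows the same way using $\int_M q_\alpha(s,\cdot,\cdot)\ls1$; this proves \eqref{heat1}. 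Finally \eqref{heat2} is obtained from \eqref{heat1} at $t=1$, the identity $p^V(t,x,y)=\int_M p^V(t-\tfrac12,x,z)\,p^V(\tfrac12,z,y)\,dz$, induction on $\lfloor t\rfloor$, and the elementary observation that $q_\alpha(t,x,y)\approx t^{-n/\alpha}$ for $t\ge1$ because $d_g\le\operatorname{diam}M$; the constants $C_1,C_2$ absorb the per–step losses. (For $n=1$ the same scheme works with $d_g^{\alpha-n}$ replaced by the weight $w$ in Definition~\ref{fractionalkato}.)

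\textbf{Main obstacle.} The two delicate points are, first, the sharp free–kernel bounds: on a general compact manifold these are not explicit and must be extracted from the Gaussian heat kernel via subordination; and, second—the real crux—the \emph{lower} bound for $p^V$ when $V$ is a signed Kato potential. There one cannot use positivity or domination shortcuts, so the small–time Duhamel estimate must be pushed to all $t$ by the chaining argument, whose off–diagonal part in turn relies on the sharp near–diagonal lower bound; keeping track of how the Kato norm $N_V(t)$ controls the Dyson series uniformly for $t\le1$, and matching the resulting constants against the exponential factors for $t>1$, is where the bookkeeping is heaviest.
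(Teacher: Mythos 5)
Your proposal follows essentially the same route as the paper: sharp two-sided bounds for the free kernel of $(-\lap)^{\alpha/2}$ obtained from the Gaussian heat kernel via subordination, the 3P inequality of Lemma~\ref{3p}, a Picard/Duhamel expansion whose $m$-th term is dominated by $(c_0 c(t))^m q_\alpha(t,x,y)$ with $c(t)\to0$ by the Kato condition, and extension to larger times through the semigroup identity. The only cosmetic difference is the passage from $t\le t_0$ to $t\in(0,1]$, where your near-diagonal chaining plus a big-jump step is an unrolled version of the paper's simpler choice of iterating $p^V(t,x,y)=\int_M p^V(t/2,x,z)p^V(t/2,z,y)\,dz$ a bounded number of times; both come to the same bookkeeping.
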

	\begin{proposition}\label{pdoheat}
		Let $n\ge1$, $0<\alpha<2$ and $t>0$. Let $p^V(t,x,y)$ be the heat kernel of $H_V=(-\Delta_g)^{\alpha/2}+P_{\alpha-1}+V$, where $V\in \mathcal{K}_\alpha(M)$ and $P_{\alpha-1}$ is a classical pseudo-differential operator of order $\alpha-1$. Then for any $t>0$, $x,y \in M$
		\begin{equation}\label{heatpdo}
			|p^V(t,x,y)|\ls e^{Ct} q_\alpha(t,x,y).
		\end{equation}
		for some constant $C>0$.
	\end{proposition}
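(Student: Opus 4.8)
The strategy is to re-run the Duhamel/Picard iteration behind Proposition~\ref{heatbd}, now treating the whole perturbation $W:=P_{\alpha-1}+V$ (self-adjoint, since $P_{\alpha-1}$ is) as a perturbation of the clean operator $L:=(-\lap)^{\alpha/2}$, whose heat kernel $p(t,x,y)$ on the closed manifold $(M,g)$ satisfies $p(t,x,y)\approx q_\alpha(t,x,y)$ together with the usual derivative bounds (obtained by subordinating the Gaussian estimates for $e^{t\lap}$). Since only the one-sided bound \eqref{heatpdo} is wanted --- no positivity and no lower bound --- exponential-in-$t$ losses are harmless, and that is what lets the scheme absorb the nonlocal term $P_{\alpha-1}$. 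Duhamel's formula
\begin{equation}\label{pdoduh}
e^{-t(L+W)}=e^{-tL}-\int_0^t e^{-(t-s)L}\,W\,e^{-s(L+W)}\,ds
\end{equation}
gives, on iteration, the series $p^V=\sum_{k\ge0}(-1)^kp_k$ with $p_0=p$ and $p_k$ equal to the kernel of $\int_{0<s_1<\cdots<s_k<t}e^{-(t-s_k)L}\,W\,e^{-(s_k-s_{k-1})L}\,W\cdots W\,e^{-s_1L}\,ds_1\cdots ds_k$, and the task is to prove $\sum_k|p_k(t,x,y)|\ls e^{Ct}q_\alpha(t,x,y)$.

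The one genuinely new ingredient, and the \textbf{main obstacle}, is a pointwise bound for the kernel of a single block $We^{-\sigma L}=(P_{\alpha-1}+V)e^{-\sigma L}$. The $V$-part is $\le|V(w)|\,p(\sigma,z,w)\ls|V(w)|\,q_\alpha(\sigma,z,w)$ and is absorbed by exactly the Kato-class bookkeeping of Proposition~\ref{heatbd}. The new part is $P_{\alpha-1}e^{-\sigma L}$, with kernel $g^P_\sigma$ say. Since $P_{\alpha-1}$ has order $\alpha-1$ \emph{strictly} below $\alpha$, applying it to the profile $p(\sigma,\cdot,w)\approx q_\alpha(\sigma,\cdot,w)$ costs a factor $\sigma^{-(\alpha-1)/\alpha}$ at the diagonal; localizing with a partition of unity and using subordination together with the standard parabolic estimates for the $\psi$DO $P_{\alpha-1}e^{t\lap}$ (kernel $\ls t^{-(\alpha-1)/2}$ times a Gaussian) one obtains, for $\sigma\in(0,1]$,
\begin{equation}\label{pdokey}
|g^P_\sigma(z,w)|\ls \widetilde q_\alpha(\sigma,z,w):=\min\bigl\{\sigma^{-\frac{n+\alpha-1}{\alpha}},\ d_g(z,w)^{1-n-\alpha}\bigr\},
\end{equation}
a profile with the same parabolic scaling as $q_\alpha$ but $\sigma^{-(\alpha-1)/\alpha}$ larger at the diagonal and with a heavier off-diagonal tail (the lower-order terms of the symbol of $P_{\alpha-1}$ are better still). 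The key features of \eqref{pdokey} are: $\widetilde q_\alpha(\sigma,\cdot)$ has finite mass over $M$ with $\int_M\widetilde q_\alpha(\sigma,z,w)\,dw\ls 1+\sigma^{-(\alpha-1)/\alpha}$ (a logarithm replacing the power when $\alpha=1$), which is integrable in $\sigma$ near $0$ precisely because $\alpha-1<\alpha$ --- so $P_{\alpha-1}$ acts like a ``generalized Kato potential''; and, when $\alpha\ge1$, the Schwartz kernel of $P_{\alpha-1}$ fails to be locally integrable, so the composition integrals in \eqref{pdoduh} must be read operator-theoretically ($P_{\alpha-1}$ acting on the smooth function $q_\alpha(\sigma,x,\cdot)$) rather than as absolutely convergent double integrals. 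This last point is also why it is cleaner to perturb $L$ than $(-\lap)^{\alpha/2}+V$: Proposition~\ref{heatbd} supplies only pointwise, not derivative, control of the heat kernel of the latter.

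Granting \eqref{pdokey}, the iteration closes just as in Proposition~\ref{heatbd}. The profiles reproduce under composition in space --- automatically for consecutive $P_{\alpha-1}$-blocks, via $P_{\alpha-1}e^{-\sigma_1L}e^{-\sigma_2L}=P_{\alpha-1}e^{-(\sigma_1+\sigma_2)L}$, and for the $V$-blocks and mixed chains via the Chapman--Kolmogorov-type reproduction inequality for $q_\alpha$ and $\widetilde q_\alpha$ already used there (one needs the analogue of the bound $\sup_x\int q_\alpha(\sigma,x,z)|V(z)|\,dz\to0$ for the slightly larger profile $\widetilde q_\alpha$, which follows by the same computation). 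It then remains to do the time bookkeeping: every $P_{\alpha-1}$-block in the expansion of $p_k$ contributes a factor $(s_j-s_{j-1})^{-(\alpha-1)/\alpha}$, so the $ds$-integral over $\{0<s_1<\cdots<s_k<t\}$ is an iterated Beta integral, finite and summable in $k$, leading to a bound $|p_k(t,x,y)|\ls \frac{(Ct^{1/\alpha})^{k}}{\Gamma(1+k/\alpha)}\,e^{Ct}\,q_\alpha(t,x,y)$ whose sum over $k$ is $\ls e^{C't}q_\alpha(t,x,y)$ (a Mittag--Leffler-type estimate); this gives \eqref{heatpdo} for $t\in(0,1]$, and the extension to all $t>0$ is obtained as in Proposition~\ref{heatbd} by writing $e^{-tH_V}=\bigl(e^{-(t/N)H_V}\bigr)^{N}$ with $N=\lceil t\rceil$ and using $q_\alpha(s,\cdot,\cdot)\approx1$ for $s\in[\tfrac12,1]$.
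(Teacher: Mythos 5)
Your route diverges from the paper's in a substantive way. The paper does not perturb $(-\Delta_g)^{\alpha/2}$ by $W=P_{\alpha-1}+V$. Instead it takes $H^0=(-\Delta_g)^{\alpha/2}+P_{\alpha-1}$ as the unperturbed operator, cites Gimperlein--Grubb \cite[Theorem 4.3]{GG} for the one-sided bound $|p_0(t,x,y)|\le Cq_\alpha(t,x,y)$ for $t\in(0,1]$, and then re-runs the Picard iteration of Proposition~\ref{heatbd} \emph{verbatim} with $V$ as the only perturbation (using the unchanged 3P inequality for $q_\alpha$ and the unchanged Kato-class quantity $c(t)$). Your scheme therefore has to reprove, in effect, the Gimperlein--Grubb kernel bound inside the iteration, and that is exactly where the argument is incomplete.

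Concretely: (i) the block estimate \eqref{pdokey} for the kernel of $P_{\alpha-1}e^{-\sigma L}$ is only sketched (``localizing with a partition of unity and using subordination together with the standard parabolic estimates''), not established; this is the heart of the matter and is precisely what the cited Gimperlein--Grubb theorem supplies. (ii) The claimed simplification ``automatically for consecutive $P_{\alpha-1}$-blocks, via $P_{\alpha-1}e^{-\sigma_1L}e^{-\sigma_2L}=P_{\alpha-1}e^{-(\sigma_1+\sigma_2)L}$'' does not reflect the structure of the Duhamel chain: in $e^{-(t-s_k)L}We^{-(s_k-s_{k-1})L}W\cdots$ every pair of heat propagators has a $W$ in between, so what actually occurs is $e^{-\sigma_{j+1}L}P_{\alpha-1}e^{-\sigma_jL}P_{\alpha-1}\cdots$, and the semigroup identity you invoke never applies. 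What is genuinely needed is a 3P-type reproduction inequality in which at least one factor is $\widetilde q_\alpha$ rather than $q_\alpha$; since $\widetilde q_\alpha$ has a strictly larger diagonal singularity $\sigma^{-(n+\alpha-1)/\alpha}$ and a heavier tail $d^{1-n-\alpha}$ without the compensating $\sigma$ factor, the ``same computation'' as Lemma~\ref{3p} does not obviously go through and is not written out. (iii) The concluding Mittag--Leffler estimate $|p_k|\ls (Ct^{1/\alpha})^k/\Gamma(1+k/\alpha)\cdot q_\alpha$ is asserted, not derived from (i)--(ii). Until these three points are filled in, the proof does not close; the cleanest fix is to do what the paper does and absorb $P_{\alpha-1}$ into the unperturbed operator by citing \cite[Theorem 4.3]{GG}, after which the rest is literally Proposition~\ref{heatbd}.
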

	The following key lemma is called 3P-inequality in \cite[Theorem 4]{bj07} and \cite[Proposition 2.4]{wz15}. We remark that such 3P-inequality holds for all $\alpha\in(0, 2)$ but fails to hold for the Gaussian kernel ($\alpha=2$).
	\begin{lemma}\label{3p}We have for any $s,t>0$ and $x,y,z\in M$
		\[q_\alpha(t,x,z)q_\alpha(s,z,y)\le Cq_\alpha(s+t,x,y)(q_\alpha(t,x,z)+q_\alpha(s,z,y)),\]
		where $C>0$ is a constant.
	\end{lemma}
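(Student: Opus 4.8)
The plan is to reduce the 3P-inequality of Lemma~\ref{3p} to a single pointwise lower bound for $q_\alpha(s+t,x,y)$, and then to prove that bound by an elementary case analysis using only the triangle inequality for $d_g$.

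First I would record the convenient equivalent form
\[
q_\alpha(t,x,y)\approx \frac{t}{\big(t^{1/\alpha}+d_g(x,y)\big)^{n+\alpha}},
\]
valid for all $t>0$ and $x,y\in M$ with implied constants depending only on $n$ and $\alpha$; this follows by checking the two regimes $d_g(x,y)\le t^{1/\alpha}$ and $d_g(x,y)\ge t^{1/\alpha}$, in which $q_\alpha(t,x,y)$ equals $t^{-n/\alpha}$ and $td_g(x,y)^{-n-\alpha}$ respectively. Dividing the asserted inequality through by the positive quantity $q_\alpha(t,x,z)\,q_\alpha(s,z,y)$ and using $\tfrac1a+\tfrac1b\ge\tfrac1{\min\{a,b\}}$, it suffices to prove
\[
q_\alpha(s+t,x,y)\gs \min\{\,q_\alpha(t,x,z),\ q_\alpha(s,z,y)\,\}\qquad\text{for all }s,t>0,\ x,y,z\in M.
\]

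To prove this I would set $a=d_g(x,z)$, $b=d_g(z,y)$, $c=d_g(x,y)$, so $c\le a+b$, and—using that the right-hand side is symmetric under interchanging the triples $(t,x,z)$ and $(s,z,y)$—assume $t\ge s$, whence $t\le s+t\le 2t$ and $(s+t)^{1/\alpha}\le 2^{1/\alpha}t^{1/\alpha}$. Then split into three cases. If $a\ge b$, then $c\le 2a$, so $(s+t)^{1/\alpha}+c\ls t^{1/\alpha}+a$, and with $s+t\ge t$ the equivalent form gives $q_\alpha(s+t,x,y)\gs t\big(t^{1/\alpha}+a\big)^{-n-\alpha}\approx q_\alpha(t,x,z)$. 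If $a<b$ and $b\ge t^{1/\alpha}$, then also $b\ge s^{1/\alpha}$ and $c\le 2b$, so $(s+t)^{1/\alpha}+c\ls b\approx s^{1/\alpha}+b$, and with $s+t\ge s$ we get $q_\alpha(s+t,x,y)\gs s\big(s^{1/\alpha}+b\big)^{-n-\alpha}\approx q_\alpha(s,z,y)$. Finally, if $a<b$ and $b<t^{1/\alpha}$, then $c\le a+b<2t^{1/\alpha}$, so $(s+t)^{1/\alpha}+c\ls t^{1/\alpha}$ and $q_\alpha(s+t,x,y)\gs t^{-n/\alpha}\ge q_\alpha(t,x,z)\ge\min\{q_\alpha(t,x,z),q_\alpha(s,z,y)\}$. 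In every case the displayed bound holds with constants depending only on $n$ and $\alpha$.

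The only subtle point—and the reason the statement fails for the Gaussian kernel $\alpha=2$—is that $q_\alpha(\cdot,x,y)$ is \emph{not} monotone in the time variable, so one cannot simply bound $q_\alpha(s+t,x,y)$ below by $q_\alpha(t,x,y)$. The case distinction above, especially the last case where the intermediate point $z$ lies ``off the diagonal'' and one must pass through the on-diagonal value $t^{-n/\alpha}$, is exactly what compensates for the lack of monotonicity. Compactness of $M$ is used only to ensure $d_g$ is a genuine distance satisfying the triangle inequality; the estimate is otherwise uniform in $s,t>0$.
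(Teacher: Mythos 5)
Your proof is correct, and it takes a genuinely different route from the paper's. The paper also begins by dividing through by $q_\alpha(t,x,z)q_\alpha(s,z,y)$, but then keeps the full sum $\tfrac1{q_\alpha(t,x,z)}+\tfrac1{q_\alpha(s,z,y)}$ and runs a single algebraic chain: it expands each reciprocal as $\max\{t^{n/\alpha},t^{-1}d_g^{n+\alpha}\}\approx t^{n/\alpha}+t^{-1}d_g^{n+\alpha}$, groups the time powers using $(t+s)^{n/\alpha}\approx t^{n/\alpha}+s^{n/\alpha}$, bounds $t^{-1},s^{-1}\ge (s+t)^{-1}$, applies the triangle inequality after re-assembling $d_g(x,z)^{n+\alpha}+d_g(z,y)^{n+\alpha}\approx(d_g(x,z)+d_g(z,y))^{n+\alpha}$, and recognizes the result as $\approx 1/q_\alpha(s+t,x,y)$ — no case split. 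You instead discard information early via $\tfrac1A+\tfrac1B\ge\tfrac1{\min\{A,B\}}$, reduce to the pointwise bound $q_\alpha(s+t,x,y)\gs\min\{q_\alpha(t,x,z),q_\alpha(s,z,y)\}$, and verify it by a three-way case analysis on which leg $x$–$z$ or $z$–$y$ dominates and whether that leg is long relative to the time scale. The paper's argument is shorter and more symmetric; yours is more geometric and makes explicit the mechanism (the midpoint $z$ must be comparably close to one endpoint, which forces the corresponding one-step kernel to control the two-step kernel), and it also surfaces cleanly why the lack of monotonicity in time is the real obstruction — a point the paper's computation handles implicitly through the identity $t^{n/\alpha}+s^{n/\alpha}\approx(t+s)^{n/\alpha}$ and the inequality $t^{-1}\ge(s+t)^{-1}$, both of which would fail to combine correctly for the Gaussian profile. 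Both proofs are elementary and rest on the same two ingredients (triangle inequality and subadditivity of $r\mapsto r^{n/\alpha}$, equivalently $r\mapsto r^{1/\alpha}$), so the difference is structural rather than one of depth.
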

	\begin{proof}
	The proof is straightforward. Indeed, by using the fact that for $A,B>0$ \[\min\{A,B\}\approx \frac{AB}{A+B},\]
	\[(A+B)^{\frac n\alpha}\approx A^{\frac n\alpha}+B^{\frac n\alpha},\]
	and the triangle inequality $d_g(x,y)\le d_g(x,z)+d_g(z,y)$, we have
	\begin{align*}
		\frac{q_\alpha(t,x,z)+q_\alpha(s,z,y)}{q_\alpha(t,x,z)q_\alpha(s,z,y)}&=\frac1 {q_\alpha(t,x,z)}+\frac1{q_\alpha(s,z,y)}\\
		& \approx t^{\frac n\alpha}+t^{-1}d_g(x,z)^{n+\alpha}+s^{\frac n\alpha}+s^{-1}d_g(z,y)^{n+\alpha}\\
		&\approx (t+s)^{\frac n\alpha}+t^{-1}d_g(x,z)^{n+\alpha}+s^{-1}d_g(z,y)^{n+\alpha}\\
		&\ge (t+s)^{\frac n\alpha}+(s+t)^{-1}(d_g(x,z)^{n+\alpha}+d_g(z,y)^{n+\alpha})\\
		&\approx (t+s)^{\frac n\alpha}+(s+t)^{-1}(d_g(x,z)+d_g(z,y))^{n+\alpha}\\
		&\ge (t+s)^{\frac n\alpha}+(s+t)^{-1}d_g(x,y)^{n+\alpha}\\
		&\approx \frac1 {q_\alpha(s+t,x,y)}.
	\end{align*}
	The implicit constants may depend on $n$ and $\alpha$. This completes the proof of Lemma \ref{3p}. 
	\end{proof}

	\noindent \textbf{Proof of Proposition \ref{heatbd}.} It is not hard to see that \eqref{heat2} follows from \eqref{heat1} and the semigroup property. So it suffices to prove \eqref{heat1}.
	
	Since $(M,g)$ is a closed manifold,  the heat kernel of $-\Delta_g$ satisfies the two-sided estimates (see Li-Yau \cite{LY}, Sturm \cite{sturm}, Saloff-Coste \cite{sal})
	\[t^{-n/2}e^{-C_1d_g(x,y)^2/t}\ls p_t(x,y)\ls t^{-n/2}e^{-C_2d_g(x,y)^2/t},\ t>0,\ x,y\in M\]
	for some constants $C_1,C_2>0$. Moreover, it is well-known that the semigroups $e^{t\Delta_g}$ and $e^{-t(-\Delta_g)^{\alpha/2}}$ are related by subordination formulas (see e.g. \cite[(4.8)]{GG}, \cite{gri03}, \cite{zo86}), which imply that the heat kernel of $H^0=(-\Delta_g)^{\alpha/2}$ is continuous and satisfies the two-sided estimates (see e.g. \cite[Theorem 4.2]{GG}, \cite[Theorem 3.1]{BSS})
	\begin{equation}\label{p0}C^{-1} q_\alpha(t,x,y)\le p_0(t,x,y)\le C q_\alpha(t,x,y),\ t>0,\ x,y\in M.\end{equation}
	The heat kernel	$p_0(t,x,y)$ is the Schwartz kernel of $f\to e^{-tH^0}f=u^0(t,x)$, which solves the heat equation
	\begin{equation}\label{heat0}\begin{cases}(\partial_t+ H^0)u^0(t,x)=0,\ \ (t,x)\in (0,\infty)\times M,\\
			u^0|_{t=0}=f.\end{cases}\end{equation}
	Similarly, the heat kernel	$p^V(t,x,y)$ is the Schwartz kernel of $f\to e^{-tH_V}f=u_V(t,x)$, which solves the heat equation
	\begin{equation}\label{heatv}\begin{cases}(\partial_t+ H_V)u_V(t,x)=0,\ \ (t,x)\in (0,\infty)\times M,\\
			u_V|_{t=0}=f.\end{cases}\end{equation}
	Note that \eqref{heat0} and \eqref{heatv} imply that
	\[(\partial_t+H^0)(e^{-tH_V}f-e^{-tH^0}f)=-V(x)e^{-tH_V}f\]
	and 
	\[(e^{-tH_V}f-e^{-tH^0}f)|_{t=0}=0.\]
	By Duhamel’s principle for the heat equation, we have
	\begin{align*}
		e^{-tH_V}f-e^{-tH^0}f&=-\int_0^te^{-(t-r)H^0}(Ve^{-r H_V}f)dr\\
		&=-\int_0^t\int_M\int_M p_0(t-r,x,z)V(z)p^V(r,z,y)f(y)dydzdr.
	\end{align*}
	where $dy$ and $dz$ denote the volume element on $(M,g)$.
	So the heat kernel of $H_V$ satisfies the integral equation
	\begin{equation}\label{inteq}
		p^V(t,x,y)=p_0(t,x,y)-\int_0^t \int_M p_0(t-r,x,z)p^V(r,z,y)V(z)dzdr.
	\end{equation}
	To prove \eqref{heat1}, we use Picard iterations (see e.g.  \cite{bj07}, \cite{wz15}) to construct a solution to \eqref{inteq}.
	For $t>0$, $x,y\in M$, let \begin{equation}\label{pn}
		p_m(t,x,y)=p_0(t,x,y)-\int_0^t \int_M p_0(t-r,x,z)p_{m-1}(r,z,y)V(z)dzdr,\ \ m\ge1.
	\end{equation}
	Moreover, let
	\[\Theta_m(t,x,y)=p_m(t,x,y)-p_{m-1}(t,x,y),\ \ m\ge1\]
	and $\Theta_0(t,x,y)=p_0(t,x,y)$. Clearly,
	\begin{equation}\label{induct}\Theta_m(t,x,y)=-\int_0^t\int_M p_0(t-r,x,z)V(z) \Theta_{m-1}(r,z,y)dzdr.\end{equation}
	We claim that for some constant $c_0>0$ and $\ c(t)>0$
	\begin{equation}\label{claim}|\Theta_m(t,x,y)|\le (c_0c(t))^mp_0(t,x,y),\ \ m\ge0.\end{equation}
  To prove the claim, we define
	\begin{equation}\label{ct}
		c(t)=\sup _{y\in M}  \int_0^t \int_M q_\alpha(r,y,z)|V(z)|dzdr.
	\end{equation} It is straightforward to see that $V$ is in the Kato class implies that 
	\begin{equation}\label{clim}\lim_{t\downarrow0}c(t)=0.\end{equation}  Indeed, for $n\ge2$,
	\begin{align*}
		\int_0^t \int_M q_\alpha(r,y,z)|V(z)|dzdr\ls \int_{d_g(z,y)<t^{\frac1{2\alpha}}}d_g(z,y)^{\alpha-n}|V(z)|dz+\int_M td_g(z,y)^{\alpha-n}|V(z)|dz,
		\end{align*}
	which implies \eqref{clim} by the definition \eqref{kato}. The case  $n=1$ is similar.

The claim \eqref{claim} is clear for $m=0$.	If the claim  is true for $m-1$, then by \eqref{induct} we have
	\begin{align*}|\Theta_{m}(t,x,y)|&\le (c_0c(t))^{m-1}\int_0^t \int_M p_0(t-r,x,z)p_0(r,z,y)|V(z)|dzdr\\
		&\le C(c_0c(t))^{m-1}\int_0^t \int_M p_0(t,x,y)(p_0(t-r,x,z)+p_0(r,z,y))|V(z)|dzdr\\
		&\le 2C^2(c_0c(t))^{m-1}p_0(t,x,y)\sup_{y\in M}\int_0^t \int_M q_\alpha(r,y,z)|V(z)|dzdr\\
		&= \frac{2C^2}{c_0}(c_0c(t))^{m}p_0(t,x,y)
	\end{align*}
	where we use Lemma \ref{3p} and the upper bound in \eqref{p0}. 
	Here $C>0$ is a constant independent of $m, s, t,x,y,z$. So we may fix $c_0\ge 2C^2$, and the claim \eqref{claim} is proved by induction.

	By \eqref{clim}, there is $0<t_0<1$ so that for any $t\in(0,t_0]$, we have $c_0c(t)\le\frac13$. Let
	\[p^V(t,x,y)=\sum_{m=0}^\infty \Theta_m(t,x,y),\ \ t\in(0,t_0],\ x,y\in M.\]
	This series is uniformly convergent, and
	\[|p^V(t,x,y)-p_0(t,x,y)|\le  \sum_{m=1}^\infty|\Theta_m(t,x,y)|\le \frac{c_0c(t)}{1-c_0c(t)}p_0(t,x,y)\le \frac12 p_0(t,x,y).\]
	Combining this with \eqref{p0}, we have
	\begin{equation}\label{estp}
		p^V(t,x,y)\approx q_\alpha(t,x,y),\ \ t\in(0,t_0].
	\end{equation}
	By letting $m\to \infty$ in \eqref{pn}, we get \eqref{inteq} for $t\in(0,t_0]$.
	
	Moreover, when $t\in(0,t_0]$, $p^V(t,x,y)$ is the unique solution to the integral equation \eqref{inteq} satisfying \eqref{estp}. Indeed, let $\tilde p(t,x,y)$ be another solution satisfying \eqref{estp}, and $\Theta=p^V-\tilde p$. Note that $|\Theta(t,x,y)|\le C p_0(t,x,y)$ for some constant $C>0$. Then by the same induction argument above  we obtain
	\[|\Theta(t,x,y)|\le C(c_0c(t))^mp_0(t,x,y),\ \forall m\ge0.\]
	By letting $m\to\infty$ we get $\Theta(t,x,y)=0$ for $t\in (0,t_0]$.

	For $t>t_0$, we recursively define \[p^V(t,x,y)=\int_M p^V(t/2,x,z)p^V(t/2,z,y)dz.\] Then $p^V(t,x,y)$ is extended to be a jointly continuous  function on $(0,\infty)\times M\times M$. Moreover, the estimate \eqref{estp} can be recursively extended to
	\[		p^V(t,x,y)\approx q_\alpha(t,x,y),\ \ t\in(0,1].\]
	This completes the proof of Proposition \ref{heatbd}.

	\noindent \textbf{Proof of Proposition \ref{pdoheat}.} The proof is similar to Proposition \ref{heatbd}. It suffices to prove \eqref{heatpdo} for $t\in (0,1]$ by the semigroup property. Then the argument above is still valid for \eqref{heatpdo}, if we  replace \eqref{p0} by the heat kernel  bounds of $H^0=(-\Delta_g)^{\alpha/2}+P_{\alpha-1}$ (see \cite[Theorem 4.3]{GG})
	\[|p_0(t,x,y)|\le C q_{\alpha}(t,x,y),\ \ t\in(0,1],\ x,y\in M.\]

		\section{Global eigenfunction estimates: proof of Lemma \ref{lpnorm}}

To prove Lemma~\ref{lpnorm}, we begin with the following resolvent estimate.
\begin{proposition}\label{resolvinfty}
   For $\la\ge1$, we have
\begin{equation}\label{resolvv}
    \|(\sqrt{-\Delta_g}-(\la+i))^{-1}\|_{L^2\to L^p}\ls \la^{\sigma(p)},\ 2<p\le \tfrac{2(n+1)}{n-1},
\end{equation}
where \begin{equation}\label{soggeexponent}
    \sigma(p)=\begin{cases}\frac{n-1}2(\frac12-\frac1p),\ \ \  2\le p<\frac{2(n+1)}{n-1}\\
		\frac{n-1}2-\frac np,\ \ \ \ \ \  \frac{2(n+1)}{n-1}\le p\le \infty.\end{cases}
\end{equation}
\end{proposition}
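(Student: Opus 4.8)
The plan is to reduce \eqref{resolvv} to Sogge's sharp spectral cluster estimate for $P:=\sqrt{-\Delta_g}$ on the compact manifold $(M,g)$, by splitting the spectrum of $P$ into unit-length windows. Since $P$ is nonnegative and self-adjoint with discrete spectrum and $\la+i$ lies at distance $1$ from the real axis (hence off the spectrum of $P$), the resolvent $(P-(\la+i))^{-1}$ is bounded on $L^2$. For $j\in\mathbb{Z}$ I would set $E_j:=\mathbf{1}_{[\la+j,\la+j+1)}(P)$; because $P\ge0$ these projections vanish unless $j\in\mathcal{J}:=\{j\in\mathbb{Z}:\la+j+1>0\}$, and $\{E_j\}_{j\in\mathcal{J}}$ is a complete family of mutually orthogonal projections, so $\sum_{j\in\mathcal{J}}E_j=I$ strongly on $L^2$ and $\sum_{j\in\mathcal{J}}\|E_jf\|_{L^2}^2=\|f\|_{L^2}^2$. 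Since $p<\infty$ on the stated range, it suffices to control $\sum_{j\in\mathcal{J}}\|(P-(\la+i))^{-1}E_jf\|_{L^p}$ and sum the bounds.

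For the single-window estimate I would proceed in two steps. First, by the functional calculus, on $\mathrm{range}(E_j)$ the operator $(P-(\la+i))^{-1}$ acts as multiplication by $(\mu-\la-i)^{-1}$ with $\mu\in[\la+j,\la+j+1)$, and since $|\mu-\la-i|^2=(\mu-\la)^2+1\gtrsim 1+j^2$ one gets $\|(P-(\la+i))^{-1}E_jf\|_{L^2}\lesssim\langle j\rangle^{-1}\|E_jf\|_{L^2}$, where $\langle j\rangle:=(1+j^2)^{1/2}$. Second, $(P-(\la+i))^{-1}E_jf$ is spectrally localized to the window $[\la+j,\la+j+1)$ (of length $\le1$, with left endpoint $\ge-1$), so Sogge's cluster estimate \cite{sogge08,sogge2015}, namely $\|\mathbf{1}_{[\mu,\mu+1)}(P)g\|_{L^p}\lesssim(1+\mu)^{\sigma(p)}\|g\|_{L^2}$ for $\mu\ge0$ and $2\le p\le\infty$, combined with the first step yields
\[
\|(P-(\la+i))^{-1}E_jf\|_{L^p}\lesssim(1+|\la+j|)^{\sigma(p)}\langle j\rangle^{-1}\|E_jf\|_{L^2}\lesssim\la^{\sigma(p)}\langle j\rangle^{\sigma(p)-1}\|E_jf\|_{L^2},
\]
using $\la\ge1$, $\sigma(p)\ge0$, and the elementary bound $1+|\la+j|\lesssim\la\langle j\rangle$ valid for $j\in\mathcal{J}$.

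It then remains to sum. By the triangle inequality in $L^p$, Cauchy--Schwarz in $j$, and the orthogonality identity above,
\[
\|(P-(\la+i))^{-1}f\|_{L^p}\lesssim\la^{\sigma(p)}\Big(\sum_{j\in\mathbb{Z}}\langle j\rangle^{2\sigma(p)-2}\Big)^{1/2}\|f\|_{L^2}.
\]
The series converges precisely when $2\sigma(p)-2<-1$, i.e.\ $\sigma(p)<\tfrac12$, and for $2<p\le\tfrac{2(n+1)}{n-1}$ one checks from \eqref{soggeexponent} that $\sigma(p)=\tfrac{n-1}2(\tfrac12-\tfrac1p)\le\tfrac{n-1}{2(n+1)}<\tfrac12$; hence the sum is $O(1)$ and \eqref{resolvv} follows.

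The main obstacle is exactly the convergence of this high-frequency tail: the requirement $\sigma(p)<\tfrac12$ is what forces the restriction $p\le\tfrac{2(n+1)}{n-1}$, and for $p$ beyond that range the windows at frequency $\gg\la$ contribute divergently, so this plain band decomposition does not reach those exponents — which is why the supercritical part of Lemma~\ref{lpnorm} must be obtained by a separate argument (e.g.\ interpolation with the $L^\infty$ bound coming from the heat kernel estimates of Section~2). Apart from that, everything is routine bookkeeping once Sogge's spectral cluster estimate is taken as input.
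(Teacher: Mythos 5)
Your proof is correct and is, at its core, the same argument as the paper's: decompose into unit-length spectral windows for $\sqrt{-\Delta_g}$, apply Sogge's cluster estimate on each window, and use $\ell^2$-orthogonality plus the $\langle j\rangle^{-1}$ decay of the resolvent to sum. The one genuine (if mild) difference is how the high-frequency part of the spectrum is handled. The paper groups all frequencies $\ge 2[\la]$ into a single block and bounds it via the Sobolev embedding $\|g\|_{L^p}\lesssim\|(\sqrt{-\Delta_g})^{n(1/2-1/p)}g\|_{L^2}$, then observes that $n(\tfrac12-\tfrac1p)<1$ on the stated range so the resolvent multiplier kills this derivative loss. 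You instead retain the band decomposition uniformly across all frequencies and sum, with convergence governed by $\sigma(p)<\tfrac12$. Your route is a little more self-contained, needing only Sogge's cluster estimate rather than an additional Sobolev step; both conditions ($n(\tfrac12-\tfrac1p)<1$ and $\sigma(p)<\tfrac12$) are just different bookkeepings of the same numerics that cap the range at $p\le\tfrac{2(n+1)}{n-1}$. One cosmetic point: the remark ``since $p<\infty$ on the stated range\ldots'' isn't doing any work — the triangle inequality and the Cauchy–Schwarz in $j$ that you use hold for $p=\infty$ as well; the actual obstruction at $p=\infty$ is the failure of $\sigma(p)<\tfrac12$, exactly as you identify in your closing paragraph, which also matches how the paper handles $p>\tfrac{2(n+1)}{n-1}$ (via the heat kernel bound in Corollary~\ref{spectralv}).
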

\begin{proof}
For $k\in \mathbb{N}$, let $\chi_{[k,k+1)}$ denote the spectral projection operators for $\sqrt{-\Delta_g}$ corresponds to the spectral interval $[k,k+1)$ , and let $\chi_{[2[\la],\infty)}$ spectral projection operator onto the interval $[2[\la],\infty)$, where $[\la]$ denotes the largest integer that is smaller than $\la$.
 Then for any function $f$, by Cauchy-Schwarz inequality
 \begin{equation}
     \begin{aligned}
         (\sqrt{-\Delta_g}-(\la+i))^{-1}f
         &=\sum_{k<2[\la]} \frac{1}{k-(\la+i)} (k-(\la+i))(\sqrt{-\Delta_g}-(\la+i))^{-1}\chi_{[k,k+1)}f \\
         &\quad+\chi_{[2[\la],\infty)}(\sqrt{-\Delta_g}-(\la+i))^{-1}f\\
         &\ls (\sum_{k<2[\la]} \big|(k-(\la+i))(\sqrt{-\Delta_g}-(\la+i))^{-1}\chi_{[k,k+1)}f\big|^2)^{\frac12}\\
          &\quad+\big|\chi_{[2[\la],\infty)} (\sqrt{-\Delta_g}-(\la+i))^{-1}f\big|
     \end{aligned}
 \end{equation}
Thus, by Minkowski's inequality
 \begin{equation}
     \begin{aligned}
        \| (\sqrt{-\Delta_g}-(\la+i))^{-1}f \|_{L^p}&\le (\sum_{k<2[\la]} \|(k-(\la+i))(\sqrt{-\Delta_g}-(\la+i))^{-1}\chi_{[k,k+1)}f\|_{L^p}^2)^{\frac12} \\
        &\quad+\|\chi_{[2[\la],\infty)} (\sqrt{-\Delta_g}-(\la+i))^{-1}f\|_{L^p}
     \end{aligned}
 \end{equation}
 
 To handle the first term on the right, 
note that $\chi_{[k,k+1)}=\chi_{[k,k+1)}\circ \chi_{[k,k+1)}$, and by the classical results in \cite{sogge88}, 
\begin{equation}
    \|\chi_{[k,k+1)}f\|_{L^p}\ls (1+k)^{\sigma(p)} \|f\|_{L^2}\ls \la^{\sigma(p)} \|f\|_{L^2},\,\,\,\text{if}\,\,k<2[\la].
\end{equation}
Thus, 
\begin{equation}
     \begin{aligned}
  \sum_{k<2[\la]}& \|(k-(\la+i))(\sqrt{-\Delta_g}-(\la+i))^{-1}\chi_{[k,k+1)}f\|_{L^p}^2)^{\frac12} \\
  &\ls \la^{\sigma(p)}  \sum_{k<2[\la]} \|(k-(\la+i))(\sqrt{-\Delta_g}-(\la+i))^{-1}\chi_{[k,k+1)}f\|_{L^2}^2)^{\frac12} \\
    &\ls \la^{\sigma(p)}  \sum_{k<2[\la]} \|\chi_{[k,k+1)}f\|_{L^2}^2)^{\frac12}\\
        &\ls \la^{\sigma(p)} \|f\|_{L^2},
     \end{aligned}
 \end{equation}
where in the second inequality we used the fact that by spectral theorem,
\begin{equation}
    \|(k-(\la+i))(\sqrt{-\Delta_g}-(\la+i))^{-1}\chi_{[k,k+1)}f\|_{L^2}\ls  \|\chi_{[k,k+1)}f\|_{L^2}, \,\,\forall\, k\in \mathbb{N}.
\end{equation}

To handle the second term, we use Sobolev estimates to see that
\begin{equation}
\begin{aligned}
    \|&\chi_{[2[\la],\infty)} (\sqrt{-\Delta_g}-(\la+i))^{-1}f\|_{L^p}\\
    &\ls  \|\chi_{[2[\la],\infty)}(\sqrt{-\Delta_g})^{n(\frac12-\frac{1}{p})} (\sqrt{-\Delta_g}-(\la+i))^{-1}f\|_{L^2}.
\end{aligned}
\end{equation}
 When $2<p\le \tfrac{2(n+1)}{n-1}$, it is straightforward to check that $n(\frac12-\frac{1}{p})<1$, thus by spectral theorem,
 \begin{equation}
    \|\chi_{[2[\la],\infty)}(\sqrt{-\Delta_g})^{n(\frac12-\frac{1}{p})} (\sqrt{-\Delta_g}-(\la+i))^{-1}f\|_{L^2}\ls \|f\|_{L^2},
\end{equation}
which is better than the desired bound in \eqref{resolvv}.
 \end{proof}
 
Now we shall prove Lemma~\ref{lpnorm}, this follows from  similar strategies as in \cite{blair2021uniform}.
Recall that $\D=\sqrt{-\Delta_g}+P_0$, by using the second resolvent formula, we have 
\begin{multline}\label{dvresolvent}
    (\D+V-(\la+i))^{-1}=(\sqrt{-\Delta_g}-(\la+i))^{-1} \\-(\sqrt{-\Delta_g}-(\la+i))^{-1}(P_0+V)(\D+V-(\la+i))^{-1}.
\end{multline}
		Since $P_0\in OPS^0$ and the eigenvalues of $\D+V$ are real, by spectral theorem, we have
  \begin{equation}\label{p0dv}
      \|P_0 (\D+V-(\la+i))^{-1}\|_{L^2\to L^2}\ls \|(\D+V-(\la+i))^{-1}\|_{L^2\to L^2}\ls 1.
  \end{equation}
Similarly, since $ V\in L^\infty(M)$, we have 
  \begin{equation}\label{vlinfty}
      \|V (\D+V-(\la+i))^{-1}\|_{L^2\to L^2}\ls 1.
  \end{equation}
Thus, \eqref{dvresolvent}, \eqref{p0dv}, \eqref{vlinfty} and \eqref{resolvv} yield that 
\begin{equation}\label{quasimode resolvent}
    \|(\D+V-(\la+i))^{-1}\|_{L^2\to L^p}\ls \la^{\sigma(p)},\ 2<p\le \tfrac{2(n+1)}{n-1}.
\end{equation}

If we let $\chi^V_{[\la,\la+1)}$ denote the spectral projection operator associated with $\sqrt{-\Delta_g}+P_0+V$ for the interval $[\la,\la+1)$, then \eqref{quasimode resolvent}  implies the following
\begin{corollary}\label{spectralv}
     Let $V\in L^\infty(M)$, we have
     \begin{equation}\label{spectralv1}
         \|\chi^V_{[\la,\la+1)}f\|_{L^p}\ls \la^{\sigma(p)}\|f\|_{L^2},\ 2<p\le \infty.
     \end{equation}
\end{corollary}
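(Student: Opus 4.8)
The plan is to derive \eqref{spectralv1} from the resolvent estimate \eqref{quasimode resolvent} by the spectral theorem, treating first the Stein--Tomas range $2<p\le p_c:=\tfrac{2(n+1)}{n-1}$ and then passing to $p>p_c$ by interpolating against the endpoint $p=\infty$.

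\emph{The range $2<p\le p_c$.} Fix $f\in L^2(M)$ and set $g=\chi^V_{[\la,\la+1)}f$, a vector in the spectral subspace of $\D+V$ attached to $[\la,\la+1)$. Put $h=(\D+V-(\la+i))g$. Since $\la+i$ lies off the (real) spectrum of $\D+V$, the resolvent $(\D+V-(\la+i))^{-1}$ is bounded and $g=(\D+V-(\la+i))^{-1}h$. Writing $\{E_\tau\}$ for the spectral resolution of $\D+V$, the spectral theorem gives
\[
\|h\|_{L^2}^2=\int_{[\la,\la+1)}|\tau-\la-i|^2\,d\|E_\tau f\|^2\ \le\ \sup_{\tau\in[\la,\la+1)}\big((\tau-\la)^2+1\big)\,\|f\|_{L^2}^2\ \le\ 2\|f\|_{L^2}^2 .
\]
Hence, by \eqref{quasimode resolvent},
\[
\|\chi^V_{[\la,\la+1)}f\|_{L^p}=\|(\D+V-(\la+i))^{-1}h\|_{L^p}\ \le\ \|(\D+V-(\la+i))^{-1}\|_{L^2\to L^p}\,\|h\|_{L^2}\ \ls\ \la^{\sigma(p)}\|f\|_{L^2},
\]
which is \eqref{spectralv1} for $2<p\le p_c$, endpoint included.

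\emph{The range $p_c<p\le\infty$.} Interpolating the $L^2\to L^{p_c}$ bound just proved with the $L^2\to L^\infty$ bound $\|\chi^V_{[\la,\la+1)}\|_{L^2\to L^\infty}\ls\la^{(n-1)/2}$ gives, for $\tfrac1p=\tfrac{1-\theta}{p_c}$, the exponent $(1-\theta)\sigma(p_c)+\theta\tfrac{n-1}{2}$; using $\sigma(p_c)=\tfrac{n-1}{2(n+1)}$ and $p_c=\tfrac{2(n+1)}{n-1}$ one checks this equals $\sigma(p)=\tfrac{n-1}{2}-\tfrac np$. So it remains to prove $\|\chi^V_{[\la,\la+1)}\|_{L^2\to L^\infty}\ls\la^{(n-1)/2}$; as $\chi^V_{[\la,\la+1)}$ is a self-adjoint projection with continuous kernel, this amounts to the pointwise Weyl-type bound $\sup_{x\in M}\chi^V_{[\la,\la+1)}(x,x)\ls\la^{n-1}$. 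I would fix a nonnegative $\rho\in\mathcal S(\mathbb{R})$ with $\rho\ge1$ on $[-1,1]$ and $\widehat{\rho}$ supported in a small neighborhood of the origin, so that $\chi^V_{[\la,\la+1)}\le\rho(\D+V-\la)^2$ in the sense of self-adjoint operators; then for every $x\in M$,
\[
\chi^V_{[\la,\la+1)}(x,x)\ \le\ \big[\rho(\D+V-\la)^2\big](x,x)\ =\ \|\rho(\D+V-\la)(x,\cdot)\|_{L^2}^2\ \le\ \|\rho(\D+V-\la)\|_{L^2\to L^\infty}^2 .
\]
Writing $\rho(\D+V-\la)=\tfrac1{2\pi}\int\widehat{\rho}(s)e^{-is\la}e^{is(\D+V)}\,ds$ and comparing with $\rho(\D-\la)$ via Duhamel's formula $e^{is(\D+V)}=e^{is\D}+i\int_0^s e^{i(s-r)\D}\,V\,e^{ir(\D+V)}\,dr$, the leading term $\rho(\D-\la)$ satisfies the classical pointwise Weyl bound $\ls\la^{(n-1)/2}$ because $\D=\sqrt{-\Delta_g}+P_0\in OPS^1$ is smooth, while the correction is controlled using only $\|V\|_{L^\infty(M)}$, the $L^2$-unitarity of the half-wave groups $e^{ir\D}$ and $e^{ir(\D+V)}$, and the $L^2\to L^\infty$ smoothing already built into the leading term. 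This gives $\|\rho(\D+V-\la)\|_{L^2\to L^\infty}\ls\la^{(n-1)/2}$ and completes \eqref{spectralv1}.

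I expect the endpoint $p=\infty$ to be the only genuine difficulty. The resolvent estimate \eqref{quasimode resolvent} does not extend past $p_c$ --- already for $\D$ in place of $\D+V$ the high-frequency part of $(\D-(\la+i))^{-1}$ fails to be bounded from $L^2$ to $L^p$ once $p$ is large --- so the large-exponent range of the corollary cannot be obtained from the resolvent alone and must rely on the half-wave analysis above. Its delicate point is the Duhamel comparison with the unperturbed operator, where one must use that $V\in L^\infty(M)$ is a \emph{lower-order} perturbation of $\D\in OPS^1$, as in \cite{blair2021uniform} (and, in the smooth case, Seeger--Sogge \cite{ss}). The remaining steps --- the functional-calculus reduction to the resolvent and the interpolation --- are routine.
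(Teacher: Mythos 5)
Your treatment of the range $2 < p \le p_c:=\tfrac{2(n+1)}{n-1}$ is essentially the paper's: both reduce to the resolvent bound \eqref{quasimode resolvent} via the spectral theorem, the only cosmetic difference being that the paper states the auxiliary $L^2$ estimate as $\|(\D + V - (\la+i))\chi^V_{[\la,\la+1)}\|_{L^2\to L^2}\ls 1$ rather than computing $\|h\|_{L^2}$ directly.

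For $p > p_c$ you take a genuinely different route. You propose to interpolate the $L^2\to L^{p_c}$ bound against an $L^2\to L^\infty$ endpoint, and to obtain that endpoint by comparing $\rho(\D+V-\la)$ with $\rho(\D-\la)$ through the half-wave Duhamel identity. The paper avoids proving a new $L^\infty$ bound altogether: it uses the heat kernel estimate $|p^V(t,x,y)|\ls e^{Ct}q_1(t,x,y)$ for $H_V=\sqrt{-\Delta_g}+P_0+V$ from Proposition~\ref{pdoheat}, which by Young's inequality gives the semigroup smoothing $\|e^{-tH_V}\|_{L^{p_c}\to L^p}\ls t^{-n(1/p_c-1/p)}$, and then exploits that $e^{-\la^{-1}H_V}$ both commutes with $\chi^V_{[\la,\la+1)}$ and is boundedly invertible on the spectral cluster. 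Once the heat kernel bound of Section~2 is in hand, the $p>p_c$ range is a one-line computation, with $p=\infty$ coming for free.

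Your route, by contrast, has a real gap at exactly the step you flag as ``delicate.'' After exchanging the $(s,r)$ integrals in the Duhamel correction, the inner $s$-integral is one-sided (over $s\ge r$), so it does not reproduce $\rho(\D-\la)$; it produces an operator of the form $\tilde g_r(\D-\la)$ whose symbol $\tilde g_r(\tau)$ has only $O((1+|\tau|)^{-1})$ decay because the time cutoff $\1_{\{s\ge r\}}\widehat\rho(s)$ has a jump at $s=r$. Feeding that into the local Weyl law gives $\sup_x\sum_j|\tilde g_r(\la_j-\la)|^2|e_j(x)|^2\approx\sum_k(1+|k-\la|)^{-2}(1+k)^{n-1}$, which diverges for $n\ge3$. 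So the assertion that the correction is controlled ``using the $L^2\to L^\infty$ smoothing already built into the leading term'' does not close as written; the references you invoke (\cite{BSS19}, \cite{blair2021uniform}, \cite{jmpa}) handle precisely this one-sided-cutoff obstruction, but by additional decompositions that your sketch omits, and that work is the crux of the $L^\infty$ endpoint, not a routine detail. This is precisely the difficulty the paper's heat kernel route sidesteps: $q_1(t,x,y)$ has integrable off-diagonal decay of order $n+1$, so the Duhamel/Picard iteration of Propositions~\ref{heatbd}--\ref{pdoheat} converges without any analogue of the one-sided problem.
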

Note that if we take $f=e_\la$ in \eqref{spectralv1}, and use the fact that $\chi^V_{[\la,\la+1)} e_\la=e_\la$, we obtain \eqref{upp}. 
\begin{proof}[Proof of Corollary~\ref{spectralv}]
  If $2<p\le \tfrac{2(n+1)}{n-1}$, this follows from \eqref{quasimode resolvent} by letting $f=\chi^V_{[\la,\la+1)}f$ there along with the fact that 
\begin{equation}\label{quasimode resolvent1}
    \|(\D+V-(\la+i))\chi^V_{[\la,\la+1)}\|_{L^2\to L^2}\ls 1.
\end{equation}

  If $p>\tfrac{2(n+1)}{n-1}$, we shall use the heat kernel bounds in Proposition \ref{pdoheat}. More explicitly, let $H_V=\sqrt{-\Delta_g}+P_0+V$, note that if $V\in L^\infty(M)$, then \eqref{kato} holds with $\alpha=1$, thus $V\in \mathcal{K}_1(M)$, which, by Proposition \ref{pdoheat},  implies that we have the kernel estimate \eqref{heatpdo} for $e^{-tH_V}$.  As a result, 
by \eqref{heatpdo} and Young's inequality, we have the following:
\begin{equation}\label{young}
 \|e^{-tH_V}\|_{L^p(M)\rightarrow L^q(M)} \ls t^{- n(\frac 1p-\frac 1q)}, \quad \text{if}\hspace{2mm} 0<t\le1, \ \text{and} \hspace{2mm} 1\le p\le q\le \infty.
\end{equation}  
If we fix $t=\la^{-1}$ and $p_c=\frac{2(n+1)}{n-1}$, and apply the above bound, we have for $p>\frac{2(n+1)}{n-1}$, 
  \begin{equation}\label{soblev chiv}
  \begin{aligned}
          \|\chi^V_{[\la,\la+1)}f\|_{L^p}&\ls  \la^{n(\frac 1{p_c}-\frac 1p)}\|e^{\la^{-1}H_V}\chi^V_{[\la,\la+1)}f\|_{L^{p_c}} \\
          &= \la^{n(\frac 1{p_c}-\frac 1p)}\|\chi^V_{[\la,\la+1)}e^{\la^{-1}H_V}\chi^V_{[\la,\la+1)}f\|_{L^{p_c}}\\
          & \ls \la^{n(\frac 1{p_c}-\frac 1p)}\la^{\frac{n-1}2-\frac n{p_c}}\|e^{\la^{-1}H_V}\chi^V_{[\la,\la+1)}f\|_{L^{2}} \\
          & \ls \la^{\frac{n-1}2-\frac n{p}}\|f\|_{L^{2}},
  \end{aligned}
     \end{equation}
where in the third line we applied \eqref{spectralv1} at $p=p_c$ and in the last line we  applied spectral theorem. Since $\tfrac{n-1}2-\tfrac n{p}=\sigma(p)$ when 
 $p\ge p_c$, the proof of Corollary~\ref{spectralv} is complete.
\end{proof}
To prove Lemma~\ref{lpnorm} it remains to prove \eqref{low}. By using the arguments from Sogge-Zelditch \cite{sz2011}, we note that  \eqref{low} can be obtained from H\"older's inequality and \eqref{upp}
		\[\|e_\la\|_{L^2(\M)}^{\frac1\theta}\le\|e_\la\|_{L^1(\M)} \|e_\la\|_{L^p(\M)}^{\frac1\theta-1}\ls \|e_\la\|_{L^1(\M)} (\la^{\sigma(p)}\|e_\la\|_{L^2(\M)})^{\frac1\theta-1}=\|e_\la\|_{L^1(\M)}\la^{\frac{n-1}4}\|e_\la\|_{L^2(\M)}^{\frac1\theta-1}.\]
		Here $2<p<\frac{2(n+1)}{n-1}$, and $\theta=\frac{p}{p-1}(\frac12-\frac1p)$.

\section{Kernels of Pseudo-differential operators}
 In this section, we prove a useful lemma concerning the kernel estimates of the pseudo-differential operators on compact manfolds. 
 \begin{lemma}\label{pdo}
  	Let $\mu\in \mathbb{R}$, and $m\in C^\infty(\mathbb{R})$ belong to the symbol class $S^\mu$, that is, assume that
\begin{equation}\label{symb}|\partial_t^{\alpha} m(t)| \leq C_{\alpha}(1+|t|)^{\mu-\alpha}, \quad \forall \alpha.  	\end{equation}
  	If $P=\sqrt{-\Delta_g}$, then $m(P)$ is a pseudo-differential operator of order $\mu$. Moreover, if $R\ge1$, then the kernel of the operator $m(P/R)$ satisfies for all $N\in\mathbb{N}$
  	\begin{equation}\label{withla}
  		|m(P/R)(x,y)|\\\ls  \begin{cases}R^n\big(R d_g(x,y)\big)^{-n-\mu}\big(1+R d_g(x,y)\big )^{-N},\,\quad\quad\,\,\,\, \ n+\mu>0\\
  			R^n\log(2+(Rd_g(x,y))^{-1})\big(1+R d_g(x,y)\big)^{-N},\,\, \ n+\mu=0\\
  			R^n(1+Rd_g(x,y))^{-N},\ \ \ \ \ \quad\quad\quad\quad\quad\quad\quad\quad\ \  n+\mu<0.
  		\end{cases}
  	\end{equation}
  \end{lemma}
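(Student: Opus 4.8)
That $m(P)\in OPS^\mu$ when $P=\sqrt{-\Delta_g}$ is part of the standard functional calculus for elliptic (pseudo)differential operators (see e.g.\ \cite{Taylor}); the substance of the lemma is the kernel bound \eqref{withla}, where one must keep track of the dependence on $R$. The plan is to reduce \eqref{withla} to a single kernel estimate at one fixed dyadic frequency scale and then to sum a dyadic series in that scale. The required single-scale estimate is the elementary fact that if $\lambda\ge1$ and $\psi$ ranges over a bounded subset of $C_0^\infty(\mathbb R)$ with supports in a fixed compact set, then for every $N$
\[
|\psi(P/\lambda)(x,y)|\ls_N \lambda^n\bigl(1+\lambda\, d_g(x,y)\bigr)^{-N},
\]
the implicit constant depending only on $N$ and finitely many derivative bounds of $\psi$; heuristically this says that a frequency-$\lambda$ localized operator has kernel comparable to an $L^1$-normalized bump of width $\lambda^{-1}$ about the diagonal.

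This single-scale estimate is the crux, and I would establish it by the standard wave-equation argument. Writing $\psi(P/\lambda)=\tfrac1{2\pi}\int_{\mathbb R}\widehat{\psi(\cdot/\lambda)}(s)\,e^{isP}\,ds$ with $\widehat{\psi(\cdot/\lambda)}(s)=\lambda\,\hat\psi(\lambda s)$ Schwartz, one splits the $s$-integral at a scale $\delta$ below the injectivity radius of $(M,g)$. On $|s|\le\delta$ one inserts the Hadamard/FIO parametrix for the half-wave group $e^{isP}$ in geodesic normal coordinates and estimates the resulting oscillatory integral in the frequency variable by non-stationary phase; this produces precisely the factor $\lambda^n(1+\lambda d_g(x,y))^{-N}$. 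On $|s|>\delta$, finite propagation speed for $\cos(sP)$ together with the rapid decay of $\widehat{\psi(\cdot/\lambda)}$ shows the contribution is $O(\lambda^{-\infty})$ uniformly in $x,y$; in particular this is what controls the range $d_g(x,y)\gtrsim1$. See e.g.\ \cite{sogge2015} for bounds of this type.

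Granting the single-scale estimate, what remains is bookkeeping. Take a Littlewood--Paley decomposition of the symbol, $m=\sum_{j\ge0}m_j$, where $m_0\in C_0^\infty$ is supported in $|t|\lesssim1$ and, for $j\ge1$, $m_j(t)=2^{j\mu}a_j(2^{-j}t)$ with $\{a_j\}_{j\ge1}$ a bounded family in $C_0^\infty$ supported in a fixed annulus; this is possible thanks to the symbol bound \eqref{symb}. Then $m_j(P/R)=2^{j\mu}a_j\bigl(P/(2^jR)\bigr)$ with $2^jR\ge1$ (and $m_0(P/R)$ is of the form $\psi(P/R)$), so the single-scale estimate gives, for every $N$ and all $j\ge0$,
\[
|m_j(P/R)(x,y)|\ls_N 2^{j\mu}(2^jR)^n\bigl(1+2^jR\,d_g(x,y)\bigr)^{-N},
\]
with $2^{j\mu}$ read as $1$ when $j=0$. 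For $d_g(x,y)>0$ the right-hand side is summable in $j$, so off the diagonal the kernel of $m(P/R)$ equals $\sum_{j\ge0}m_j(P/R)(x,y)$, and it remains to sum this.

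Writing $d=d_g(x,y)$ and letting $j_*$ be the integer with $2^{j_*}\approx(Rd)^{-1}$, I would split the sum at $j_*$. For $2^j\gtrsim(Rd)^{-1}$ the rapid-decay factor (with $N$ large) makes the series converge geometrically and contribute $\ls R^n(Rd)^{-n-\mu}$ when $Rd\le1$, respectively $O\bigl(R^n(Rd)^{-M}\bigr)$ for every $M$ when $Rd\ge1$. For $1\le2^j\lesssim(Rd)^{-1}$ (which only occurs when $Rd\le1$) the decay factor is $\approx1$ and the sum reduces to $R^n\sum_{1\le2^j\lesssim(Rd)^{-1}}2^{j(n+\mu)}$, which is $\approx R^n(Rd)^{-n-\mu}$ if $n+\mu>0$, $\approx R^n\log\bigl(2+(Rd)^{-1}\bigr)$ if $n+\mu=0$, and $\approx R^n$ if $n+\mu<0$; the $j=0$ term always contributes $\ls_N R^n(1+Rd)^{-N}$. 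Combining these regimes reproduces exactly the three cases of \eqref{withla}. The only genuinely hard ingredient is the single-scale bump estimate, where the Riemannian geometry enters (injectivity radius, finite propagation speed, the half-wave parametrix); all of the $R$-dependence in \eqref{withla} is then extracted by the elementary dyadic summation above.
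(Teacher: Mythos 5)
Your proposal is correct in outline but takes a genuinely different route from the paper. You perform a Littlewood--Paley decomposition at the level of the symbol, $m=\sum_{j\ge0}m_j$ with $m_j(P/R)=2^{j\mu}a_j\bigl(P/(2^jR)\bigr)$, reduce everything to a single compactly supported bump estimate $|\psi(P/\lambda)(x,y)|\ls\lambda^n(1+\lambda d_g(x,y))^{-N}$, and recover \eqref{withla} from an elementary dyadic summation. The paper instead works with $m(P/R)=\tfrac{R}{2\pi}\int\hat m(tR)\cos(tP)\,dt$ directly, inserting the Hadamard parametrix for the cosine transform and analysing the leading term $E_0$, the lower-order terms $E_\nu$, the error $R_{N_0}$, and the $(1-\rho(t))$ piece separately; the three cases of \eqref{withla} come out of a single non-dyadic integral in $\xi$ in the estimate of $I_1$ there. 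Your route buys a clean modularization -- the Riemannian geometry is confined to one standard estimate, and the rest is bookkeeping -- while the paper's route is self-contained and makes the dependence on the Hadamard coefficients explicit (which is also what it reuses later for $\beta_\ell(P)$). Your dyadic summation is carried out correctly, including the split at $j_*$ with $2^{j_*}\approx(Rd)^{-1}$ and the three regimes for $n+\mu$.

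One step in your sketch of the single-scale estimate is misattributed, though the conclusion is standard and true. For $|s|>\delta$ on a compact manifold, finite propagation speed gives nothing (every point is within $\mathrm{diam}(M)$ of every other, and $|s|$ ranges over all of $\mathbb{R}$), so it does not control that contribution. What does control it is a spectral argument: the $|s|>\delta$ piece defines an operator $\tilde b(P)$ with $|\tilde b(\tau)|\ls_{N,M}\lambda^{-M}(1+\tau/\lambda)^{-N}$, and one sums the resulting kernel over spectral clusters using the pointwise Weyl bound $\sum_{\lambda_j\in[k,k+1)}|e_j(x)e_j(y)|\ls(1+k)^{n-1}$, which is exactly what the paper does in \eqref{2nd}. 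Finite propagation speed \emph{is} relevant, but only for the $|s|\le\delta$ piece, where it (via the support of the parametrix) guarantees that contribution vanishes once $d_g(x,y)>\delta$. If you replace the finite-propagation claim for $|s|>\delta$ with the Weyl-law summation, your argument is complete.
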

  See \cite[Theorem 4.3.1]{fio} for the proof of the fact that $m(P)$ is a pseudo-differential operator of order $\mu$.   The kernel bounds \eqref{withla} can be viewed as the rescaled version on compact manifolds compared to the Euclidean estimates in \cite[Proposition 1 on page 241]{steinbook}. We mean that the bounds hold near the diagonal (so that $d_g(x,y)$ is smaller than the injectivity radius of $M$) and that   outside the neighborhood of the diagonal  they are $O(R^{-N})$.  Roughly speaking, modulo lower order terms, $m(P/R)(x, y)$  equals
  \[(2\pi)^{-n}\int_{\mathbb{R}^n}m(|\xi|/R)e^{i d_g(x,y)\xi_1}d\xi\]
  near the diagonal, which satisfies the bounds in \eqref{withla}, while outside of a fixed neighborhood of the diagonal  $m(P/R)(x, y)$ is $O(R^{-N})$. For completeness, we give a detailed proof by using the Hadamard parametrix.

\noindent \textbf{Proof of  \eqref{withla}.} Since the spectrum of $P=\sqrt{-\Delta_g}$ is nonnegative, we may assume that $m(t)$ is an even function on $\mathbb{R}$. Let $\delta>0$ be smaller than the injectivity radius of $(M,g)$. Let $\rho\in C_0^\infty(-1,1)$ be even and satisfy $\rho\equiv1$ on $(-\frac\delta2,\frac\delta2)$. So we can write
\begin{align}\label{intsum}m(P/R)&=\frac R{2\pi}\int_{\mathbb{R}} \hat m(tR) \cos (tP)dt \nonumber\\ 
	&=\frac R{2\pi}\int \rho(t)\hat m(tR)\cos (tP)dt+\frac R{2\pi}\int (1-\rho(t))\hat m(tR)\cos (tP)dt.
	\end{align}
  To handle the first term in \eqref{intsum}, we need to use the Hadamard parametrix (see e.g. \cite[Section 1.2 and Theorem 3.1.5]{hangzhou}).  For $0<t<\delta$ and $N_0>n+3$, we have 
 \begin{align}\label{costp}
  	\cos tP(x,y)=\sum_{\nu=0}^{N_0}\omega_\nu(x,y)\partial_t E_\nu(t,d_g(x,y))+R_{N_0}(t,x,y)  \end{align} 
 where the leading term \begin{equation}\label{E0}\partial_tE_0=(2\pi)^{-n}\int_{\mathbb{R}^n} e^{id_g(x,y)\xi_1}\cos (t|\xi|)d\xi\end{equation}
 and $E_\nu$ satisfies $2\partial_t E_\nu=t E_{\nu-1}$, and $\partial_t E_\nu(t/R,r)=R^{n-2\nu}\partial_t E_\nu(t,Rr)$ for any $R>0$. Here $\omega_\nu \in C^\infty(M\times M)$, and $\omega_0(x,x)=1,\forall x \in M$. For $\nu\ge1$, we have the following explicit formula (see e.g. \cite[Section 1.2]{hangzhou})
 \begin{align*}
 	E_\nu=\nu!(2\pi)^{-n}\int_{0\le s_1\le...\le s_\nu\le t}\int_{\mathbb{R}^n} e^{id_g(x,y)\xi_1}\frac{\sin(t-s_\nu)|\xi|}{|\xi|}&\frac{\sin(s_\nu-s_{\nu-1})|\xi|}{|\xi|}\cdot\cdot\cdot\\
 	&\cdot\frac{\sin(s_2-s_{1})|\xi|}{|\xi|}\frac{\sin s_1|\xi|}{|\xi|}d\xi ds_1...ds_\nu.
 \end{align*}
 So for $\nu\ge1$ we can  obtain (see e.g. \cite[Section 1.2]{hangzhou})
 	\begin{align}\label{Ev}\partial_tE_\nu=\tfrac12 tE_{\nu-1}&=\int e^{id_g(x,y)\xi_1} a_\nu(t,|\xi|)d\xi\\
 	&=\sum_{\pm}\sum_{j=0}^{\nu-1}a_{j\nu}^\pm\int e^{id_g(x,y)\xi_1\pm it|\xi|}t^{j+1}|\xi|^{-2\nu+1+j}d\xi,\nonumber\end{align}
 where  $a_{j\nu}^\pm$ are constants, and $a_\nu\in C^\infty$.
The remainder kernel $R_{N_0}\in C^{N_0-n-3}$ satisfies
  \begin{equation}\label{err}
  	|\partial^\alpha_{t,x,y}R_{N_0}(t,x,y)|\ls |t|^{2N_0+2-n-|\alpha|},\ \ |\alpha|\le N_0-n-2.
  \end{equation}

Then we plug  \eqref{costp} into the first term of \eqref{intsum}. We first handle the contribution of the leading term in \eqref{costp}. By \eqref{E0}, we can write 
  \begin{align*}
  \frac R{2\pi}\iint\rho(t)\hat m(tR)&\cos (t|\xi|)e^{id_g(x,y)\xi_1}dtd\xi= \int m(|\xi|/R)e^{id_g(x,y)\xi_1}d\xi+\\
  & \ \ \ \ \frac R{2\pi}\iint (1-\rho(t))\hat m(tR)\cos (t|\xi|)e^{id_g(x,y)\xi_1}dtd\xi:=I_1+I_2.
    \end{align*}
Using the property \eqref{symb} and integration by parts, we see that for any $N\in\mathbb{N}$
 \begin{align}\label{I1}
 |I_1|\ls \begin{cases}R^n\big(R d_g(x,y)\big)^{-n-\mu}\big(1+R d_g(x,y)\big )^{-N},\,\quad\quad\,\,\,\, \ n+\mu>0\\
 		R^n\log(2+(Rd_g(x,y))^{-1})\big(1+R d_g(x,y)\big)^{-N},\,\, \ n+\mu=0\\
 		R^n(1+Rd_g(x,y))^{-N},\ \ \ \ \ \quad\quad\quad\quad\quad\quad\quad\quad\ \  \ n+\mu<0
 	\end{cases}
\end{align}
and
  \begin{align}\label{I2}
|I_2|&\ls \Big| R\iiint (1-\rho(t))(tR)^{-N} m^{(N)}(s)e^{-itRs}\cos (t|\xi|)e^{id_g(x,y)\xi_1}dsdtd\xi\Big|\nonumber\\
  &\ls R^{-N+1}\iint (1+||\xi|-R|s||)^{-N_1}(1+|s|)^{-N+\mu}dsd\xi\nonumber\\
  	&\ls R^{-N}\int (1+|\xi|/R)^{-N+\mu}d\xi\nonumber\\
  &\ls R^{-N+n}.
  \end{align}
Here we choose $N_1>N>n+\mu$. 

 Similarly, we can handle the contributions of the remaining terms in \eqref{costp}. For each $\nu\ge1$, we can write 
  \begin{align*}
	\frac R{2\pi}\int \rho(t)\hat m(tR)\partial_t &E_\nu (t,d_g(x,y))dt
	= \frac R{2\pi}\int\hat m(tR)\partial_t E_\nu (t,d_g(x,y))dt-\\
	&  \ \ \ \ \frac R{2\pi}\int (1-\rho(t))\hat m(tR)\partial_t E_\nu (t,d_g(x,y))dt:=I_3+I_4.
\end{align*}
Using the scaling property $\partial_t E_\nu(t/R,r)=R^{n-2\nu}\partial_t E_\nu(t,Rr)$ and the formula \eqref{Ev}, we can integrate by parts to see that
 \begin{align}|I_3|&=(2\pi)^{-1} R^{n-2\nu}\Big|\int \hat m(t)\partial_t E_\nu(t, Rd_g(x,y))dt\Big|\nonumber\\
 &= (2\pi)^{-1}R^{n-2\nu}\Big|\sum_{\pm}\sum_{j=0}^{\nu-1}a_{j\nu}^\pm\iint e^{iRd_g(x,y)\xi_1\pm it|\xi|}\hat m(t)t^{j+1}|\xi|^{-2\nu+1+j}dtd\xi \Big|\nonumber\\
 &= R^{n-2\nu}\Big|\sum_{\pm}\sum_{j=0}^{\nu-1}i^{-j-1}a_{j\nu}^\pm \int e^{iRd_g(x,y)\xi_1}m^{(j+1)}(\pm|\xi|)|\xi|^{-2\nu+1+j}d\xi \Big|\nonumber\\
 &\ls R^{n-2\nu}(1+Rd_g(x,y))^{-N}+\sum_{j=0}^{\nu-1}\Big|\int e^{iRd_g(x,y)\xi_1}m^{(j+1)}(|\xi|)|\xi|^{-2\nu+1+j}\varphi(|\xi|)d\xi \Big|\label{split}\\
 &\ls \begin{cases}R^{n-2\nu}\big(R d_g(x,y)\big)^{-n-\mu}\big(1+R d_g(x,y)\big )^{-N},\,\quad\quad\,\,\,\, \ n+\mu>0\\
 	R^{n-2\nu}\log(2+(Rd_g(x,y))^{-1})\big(1+R d_g(x,y)\big)^{-N},\,\, \ n+\mu=0\\
 	R^{n-2\nu}(1+Rd_g(x,y))^{-N},\ \ \ \ \ \quad\quad\quad\quad\quad\quad\quad\quad\ \  \ n+\mu<0
 \end{cases}\label{I3}
\end{align}
where $\varphi\in C^\infty$ vanishes near the origin but equals one near infinity. The first term in \eqref{split} follows from  the smoothness of  $a_\nu$ in \eqref{Ev} near $\xi=0$  and integration by parts. Moreover,
 \begin{align}\label{I4}|I_4|&\ls \sum_{\pm}\sum_{j=0}^{\nu-1}\Big| R\iiint (1-\rho(t))(tR)^{-N} m^{(N+j+1)}(s)e^{-itRs}e^{id_g(x,y)\xi_1\pm it|\xi|}\phi_{j\nu}(|\xi|)d\xi dsdt\Big|\nonumber\\
 	&\ls\ R^{-N+1}\iint (1+||\xi|-R|s||)^{-N_1}(1+|s|)^{-N+\mu}dsd\xi\nonumber\\
 &\ls R^{-N}\int (1+|\xi|/R)^{-N+\mu}d\xi\nonumber\\
 	&\ls R^{-N+n}.
\end{align}
 The  remainder term $R_{N_0}$ in \eqref{costp} is easy to handle. Indeed, for $n+\mu<N\le N_0-n-2$, using \eqref{err} we integrate by parts to obtain
  \begin{align}\label{R}\Big|\frac R{2\pi}\int \rho(t)\hat m(tR) R_{N_0}(t,x,y)dt\Big|&\ls R^{-N+1}\Big|\iint \rho(t)t^{-N}R_{N_0}(t,x,y)m^{(N)}(s)e^{-itRs}dsdt\Big|\nonumber\\
  	&\ls R^{-N+1}\int (1+R|s|)^{-N}(1+|s|)^{\mu-N} ds\nonumber\\
  	&\ls R^{-N+1}.
\end{align}
  To handle the second term in \eqref{intsum}, we notice that for $\la\ge0$
  \begin{align*}
  	\Big|\frac R{2\pi}\int (1-\rho(t))\hat m(tR)\cos (t\la)dt\Big|&\ls \Big|R\iint (1-\rho(t))(tR)^{-N} m^{(N)}(s)e^{-itRs}\cos (t\la)dtds\Big|\\
  	&\ls R^{-N+1}\int (1+|\la-R|s||)^{-N_1}(1+|s|)^{-N+\mu}ds\\
  	&\ls R^{-N}(1+\la /R)^{-N+\mu}.
  \end{align*}
  Thus, we obtain
   \begin{align}\label{2nd}
  	\Big|\frac R{2\pi}\int (1-\rho(t))\hat m(tR)\cos (tP)(x,y)dt\Big|
  	&\ls R^{-N}\sum_j (1+\la_j/R)^{-N+\mu}|e_j(x)e_j(y)|\nonumber\\
  	&\ls R^{-N}\sum_k (1+k/R)^{-N+\mu}\sum_{\la_j\in[k,k+1)}|e_j(x)e_j(y)|\nonumber\\
  	&\ls R^{-N}\sum_k (1+k/R)^{-N+\mu}(1+k)^{n-1}\nonumber\\
  	&\ls R^{-N+n}.
  \end{align}
 Here we used the $L^\infty$ bound of Laplace eigenfunctions (see e.g. \cite[Lemma 4.2.4]{fio})
 \[\sum_{\la_j\in[k,k+1)}|e_j(x)e_j(y)|\ls \sup_{x\in M}\sum_{\la_j\in[k,k+1)}|e_j(x)|^2\ls (1+k)^{n-1}.\]
 Combining the bounds \eqref{I1}, \eqref{I2}, \eqref{I3}, \eqref{I4}, \eqref{R}, \eqref{2nd}, we complete the proof.

	\section{Interior Eigenfunction estimates}

In this section, we prove the eigenfunction estimates in Theorem \ref{thm2}. We just need to prove Lemma \ref{harmonic}, and then Theorem \ref{thm2} follows from the $L^p$ bounds in Lemma \ref{lpnorm}. To proceed,  we shall use the following lemma.
			\begin{lemma}\label{diri}
		For any $f\in H^{1/2}(\partial\Omega)$, let $u\in H^1(\Omega)$ be the weak solution to the Dirichlet boundary
		value problem \eqref{harm}. Then there exists a constant $C>0$ such that 
		\begin{equation}
			\|u\|_{L^2(\Omega)}\le C\|f\|_{H^{-1/2}(\partial\Omega)}.\end{equation}
	\end{lemma}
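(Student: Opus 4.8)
The statement to prove is Lemma \ref{diri}: for the harmonic extension $u$ of $f \in H^{1/2}(\partial\Omega)$, one has $\|u\|_{L^2(\Omega)} \lesssim \|f\|_{H^{-1/2}(\partial\Omega)}$. The natural approach is duality combined with the standard elliptic regularity gain. Let me sketch it.

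First I would set up the duality argument. Given $w \in L^2(\Omega)$, let $\phi$ solve the inhomogeneous Dirichlet problem $\Delta_h \phi = w$ in $\Omega$, $\phi = 0$ on $\partial\Omega$. By elliptic regularity (the standard $H^2$ estimate for the Dirichlet problem on a smooth compact manifold with boundary), $\|\phi\|_{H^2(\Omega)} \lesssim \|w\|_{L^2(\Omega)}$, and by the trace theorem $\|\partial_\nu \phi\|_{H^{1/2}(\partial\Omega)} \lesssim \|\phi\|_{H^2(\Omega)} \lesssim \|w\|_{L^2(\Omega)}$. Now integrate by parts twice (Green's identity): since $\Delta_h u = 0$ in $\Omega$ and $\phi|_{\partial\Omega} = 0$,
\[
\int_\Omega u\, w \, dV = \int_\Omega u\, \Delta_h \phi \, dV = \int_{\partial\Omega} u\, \partial_\nu \phi \, dS - \int_{\partial\Omega} (\partial_\nu u)\, \phi\, dS = \int_{\partial\Omega} f\, \partial_\nu\phi\, dS.
\]
The last boundary integral is bounded by the $H^{-1/2}$–$H^{1/2}$ pairing: $\bigl|\int_{\partial\Omega} f\, \partial_\nu\phi\, dS\bigr| \le \|f\|_{H^{-1/2}(\partial\Omega)} \|\partial_\nu\phi\|_{H^{1/2}(\partial\Omega)} \lesssim \|f\|_{H^{-1/2}(\partial\Omega)} \|w\|_{L^2(\Omega)}$. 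Taking the supremum over $w$ with $\|w\|_{L^2(\Omega)} \le 1$ gives $\|u\|_{L^2(\Omega)} \lesssim \|f\|_{H^{-1/2}(\partial\Omega)}$, as desired.

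The main technical point to be careful about is the justification of the integration by parts when $f$ is only in $H^{1/2}(\partial\Omega)$ (hence $u$ only in $H^1(\Omega)$, not smooth), while $\phi \in H^2(\Omega)$. The Green's identity $\int_\Omega (u\,\Delta_h\phi - \phi\,\Delta_h u) = \int_{\partial\Omega}(u\,\partial_\nu\phi - \phi\,\partial_\nu u)$ is valid for $u \in H^1$ with $\Delta_h u \in L^2$ (here $\Delta_h u = 0$) and $\phi \in H^2$; this is a standard density/approximation fact, and the boundary terms are interpreted via the trace operator $H^1(\Omega)\to H^{1/2}(\partial\Omega)$ for $u$ and via $H^2(\Omega)\to H^{1/2}(\partial\Omega)$ for $\partial_\nu\phi$. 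Since $\phi$ vanishes on the boundary, the term $\int_{\partial\Omega}\phi\,\partial_\nu u$ drops out, so we never need to make sense of $\partial_\nu u$ (which would require more regularity). I expect this approximation step to be the only real obstacle, and it is routine: approximate $u$ in $H^1(\Omega)$ by functions in $C^\infty(\overline\Omega)$ (or use that the weak formulation of \eqref{harm} already encodes exactly the needed identity against $H^1_0$ test functions, extended to $\phi \in H^2 \cap H^1_0$ by density). The elliptic $H^2$-regularity estimate and the trace theorem are both standard and may be cited.

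One minor remark: the estimate can equivalently be phrased as boundedness of the harmonic extension (Poisson) operator from $H^{-1/2}(\partial\Omega)$ to $L^2(\Omega)$, which is the dual/adjoint statement to the regularity estimate $\|\partial_\nu \Delta_h^{-1} w\|_{H^{1/2}} \lesssim \|w\|_{L^2}$ for the solution operator of the inhomogeneous Dirichlet problem; the proof above is exactly the unwinding of this adjoint relationship, and I would present it in the self-contained form given.
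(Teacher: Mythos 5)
Your duality argument is correct and is exactly the realization of the paper's one-line justification: the paper defers the proof to \cite[Proposition 2.17]{CP19}, noting only that it follows from the trace theorem and standard elliptic regularity estimates, which are precisely the two ingredients you invoke (the $H^2$ estimate for the homogeneous Dirichlet problem and the boundedness of $\phi \mapsto \partial_\nu\phi$ from $H^2(\Omega)$ to $H^{1/2}(\partial\Omega)$). You have also correctly identified and dispatched the only subtlety, namely justifying Green's identity when $u$ is only in $H^1(\Omega)$: since $\phi\in H^2(\Omega)\cap H^1_0(\Omega)$ one may apply the divergence theorem to $\operatorname{div}(u\nabla\phi)$ and then use the weak harmonicity of $u$ against the test function $\phi\in H^1_0$, never needing $\partial_\nu u$.
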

This lemma was proved in \cite[Proposition 2.17]{CP19}. It follows from the trace theorem and standard regularity estimates (see e.g. \cite[Theorem 1.5.1.2, Theorem 1.5.1.3, Corollary 2.2.2.4, Corollary 2.2.2.6]{gris}).
\begin{lemma}\label{pdo0}
	Let $Q\in OPS^0$. Then $Q$ is bounded on $L^p$ for $1<p<\infty$, i.e.
	\[\|Qf\|_{L^p}\le C\|f\|_{L^p}.\]
\end{lemma}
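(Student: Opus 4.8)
The plan is to deduce this from the Calder\'on--Zygmund theory of singular integral operators, after localizing to coordinate charts. Since $M$ is compact, fix a finite atlas together with a subordinate smooth partition of unity $\{\chi_j\}$, and choose $\tilde\chi_j\in\Coi$ supported in the same chart with $\tilde\chi_j\equiv 1$ on $\supp\chi_j$. Since $\sum_j\chi_j\equiv1$, write
\[
Qf=\sum_j \tilde\chi_j\, Q(\chi_j f)+\sum_j (1-\tilde\chi_j)\,Q(\chi_j f).
\]
The second sum is an integral operator whose Schwartz kernel is $\sum_j(1-\tilde\chi_j(x))\chi_j(y)K_Q(x,y)$, which is supported away from the diagonal; by pseudolocality $K_Q\in C^\infty$ off the diagonal of $M\times M$, so this kernel is bounded and compactly supported, and hence by Schur's test the second sum is bounded on every $L^p(M)$, $1\le p\le\infty$. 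It therefore suffices to bound each piece $\tilde\chi_j\, Q\,\chi_j$, which has compactly supported kernel and, transported to $\mathbb{R}^n$ by the chart map, becomes a properly supported operator $A\in OPS^0(\mathbb{R}^n)$.

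For such an $A$, three standard facts give the conclusion. First, $A$ is bounded on $L^2(\mathbb{R}^n)$ by the Calder\'on--Vaillancourt theorem (for classical symbols a Cotlar--Stein almost-orthogonality argument also works). Second, the kernel $K_A(x,y)=(2\pi)^{-n}\int e^{i(x-y)\cdot\xi}a(x,\xi)\,d\xi$ is smooth off the diagonal and, by integrating by parts in $\xi$ after splitting the integral at $|\xi|\sim|x-y|^{-1}$ and using the symbol estimates defining $OPS^0$, satisfies the Calder\'on--Zygmund bounds $|K_A(x,y)|\ls |x-y|^{-n}$ and $|\nabla_{x}K_A(x,y)|+|\nabla_y K_A(x,y)|\ls |x-y|^{-n-1}$ for $x\ne y$. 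Hence $A$ is a Calder\'on--Zygmund operator, so it is of weak type $(1,1)$, and by the Marcinkiewicz interpolation theorem it is bounded on $L^p(\mathbb{R}^n)$ for $1<p\le 2$. Third, since the formal adjoint of an operator in $OPS^0$ is again in $OPS^0$, applying the preceding to $A^*$ together with duality yields boundedness on $L^p$ for $2\le p<\infty$. Undoing the localization then finishes the proof on $M$.

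The only nontrivial inputs here are the $L^2$-boundedness (Calder\'on--Vaillancourt) and the Calder\'on--Zygmund kernel estimates; both are classical and require only bookkeeping with the defining symbol bounds for order-zero operators. The localization on the manifold, Schur's test for the smooth off-diagonal piece, the Marcinkiewicz interpolation, and the duality step are all routine, so there is no serious obstacle --- the lemma is essentially a citation (see e.g.\ \cite{steinbook}, \cite{Taylor}), recorded here for completeness.
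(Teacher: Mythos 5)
Your argument is correct. The paper does not actually prove this lemma; it simply cites standard references (\cite[Theorem~3.1.6, Theorem~4.3.1]{fio}), and what you have written out is exactly the textbook Calder\'on--Zygmund proof those references contain: localize to charts, dispose of the off-diagonal piece by pseudolocality and Schur's test, establish $L^2$ boundedness (Calder\'on--Vaillancourt) and CZ kernel bounds for the local piece, then use weak $(1,1)$, Marcinkiewicz interpolation, and duality.
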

Here the $L^p$ norm can be taken on $\mathbb{R}^n$ and compact manifolds. See e.g. \cite[Theorem 3.1.6, Theorem 4.3.1]{fio} for the proofs. 
\[\]

\noindent \textbf{Proof of Theorem \ref{thm2}.} It suffices to consider two cases, $p=\infty$ and $p<\infty$. 

\noindent \textbf{Case 1}: $p=\infty$. In this case, from the maximal principle (see e.g. \cite[Theorem 8.1]{GTbook}), since $e_\la$ is harmonic in $\Omega$. We get
\begin{equation}\label{linftybound}
	\|e_\la\|_{L^\infty(\Omega)} \ls \|e_\la\|_{L^\infty(\partial\Omega)}
\end{equation}
And since $V\in L^\infty(M)$, by Lemma \ref{lpnorm}, we have
	\[\|e_\la\|_{L^\infty(\partial\Omega))}\ls \la^{\frac{n-1}{2}}\|e_\la\|_{L^2(\partial\Omega))},\]
	which yields \eqref{intlp} for the case $p=\infty$.
	
\noindent \textbf{Case 2}: $p<\infty$.
In this case, let us fix a Littlewood-Paley bump function
	$\beta\in C^\infty_0((1/2,2))$ satisfying
	$$
	\sum_{\ell=-\infty}^\infty \beta(2^{-\ell} s)=1, \quad s>0.$$
	And define 
	$$\beta_0(s)=1-\sum_{\ell> 0} \beta(2^{-\ell} |s|),\,\,\,\beta_\ell(s)=\beta(2^{-\ell} |s|),\,\,\,\text{for}\,\,\,\ell>0.
	$$
Let $P=\sqrt{-\Delta_g}$. Then we have for $\ell\ge0$,
\begin{equation}\label{beta}
   \| \beta_\ell(P)f\|_{L^p(\partial\Omega))}\ls \|f\|_{L^p(\partial\Omega))}  ,\,\,\,1\le p\le \infty.
\end{equation} The implicit constant is indepdendent of $\ell$. Indeed, by Lemma \ref{pdo} we have the kernel estimates
\[|\beta_\ell(P)(x,y)|\ls 2^{n\ell}(1+2^\ell d_g(x,y))^{-N}.\] Then \eqref{beta} follows from Young's inequality.

Let $T_H$ be the harmonic extension operator from $\partial\Omega$ to $\Omega$. Then by Lemma \ref{diri}, we have
	\begin{equation}\label{sobolevbound}
		\|T_H(\beta_\ell(P)f)\|_{L^2(\Omega)} \ls \|\beta_\ell(P)f\|_{H^{-1/2}(\partial\Omega)} \ls 2^{-\ell/2} \|f\|_{L^2(\partial\Omega)}.
	\end{equation}
And from the maximal principle and \eqref{beta}, we have 
\begin{equation}\label{linftybound1}
	\|T_H(\beta_\ell(P)f)\|_{L^\infty(\Omega)} \ls \|\beta_\ell(P)f\|_{L^\infty(\partial\Omega)}\ls \|f\|_{L^\infty(\partial\Omega)}.
\end{equation}
By \eqref{sobolevbound}, \eqref{linftybound1} and interpolation, we have the following $L^p$ estimate of the frequency-localized harmonic extension operator
\begin{equation}\label{lpbound1}
	\|T_H(\beta_\ell(P)f)\|_{L^p(\Omega)} \ls 2^{-\frac{\ell}{p}}\|f\|_{L^p(\partial\Omega)}, \ 2 \leq p \leq \infty.
\end{equation}
Thus, if $2^\ell \gs \la$, we have 
\begin{equation}\label{lpbound2}
	\|T_H(\sum_{2^\ell\gs\la}\beta_\ell(P)e_\la)\|_{L^{p}(\Omega)} \ls \sum_{2^\ell\gs\la}2^{-\frac{\ell}{p}}\|e_\la\|_{L^{p}(\partial\Omega)} \ls \la^{-1/p}\|e_\la\|_{L^{p}(\partial\Omega)}.
\end{equation}
So it remains to consider $2^\ell\ls \la$. Let $\tilde \beta \in C_0^\infty$ with $\tilde\beta\equiv 1$ in a neighborhood of $(1/2,2)$ and define $\tilde\beta_\ell(s)=\tilde\beta(2^{-\ell}|s|)$. Then by \eqref{lpbound1}
\begin{equation}\label{lpbound3}
	\|T_H(\beta_\ell(P)e_\la)\|_{L^{p}(\Omega)}=\|T_H(\beta_\ell(P)\tilde \beta_\ell(P)e_\la)\|_{L^{p}(\Omega)} \ls 2^{-\frac{\ell}{p}}\|\tilde \beta_\ell(P)e_\la\|_{L^{p}(\partial\Omega)}.
\end{equation}
Moreover, for $2\le p<\infty$
\begin{equation}\label{lpbound4}
	\begin{aligned}
	\|&\tilde \beta_\ell(P)e_\la\|_{L^{p}(\partial\Omega)}\\
		&=(1+\la)^{-1} \|\tilde \beta_\ell(P)(1+\sqrt{-\Delta_g}+P_0+V)e_\la\|_{L^{p}(\partial\Omega)}\\
		&\ls (1+\la)^{-1} \|\tilde \beta_\ell(P)(1+\sqrt{-\Delta_g})e_\la\|_{L^{p}(\partial\Omega)}+ (1+\la)^{-1}\|\tilde \beta_\ell(P)(P_0+V)e_\la\|_{L^{p}(\partial\Omega)} \\
		&\ls (1+\la)^{-1} 2^{\ell}\|e_\la\|_{L^{p}(\partial\Omega)}+ (1+\la)^{-1}\|e_\la\|_{L^{p}(\partial\Omega)}
	\end{aligned}
\end{equation}
where we used \eqref{beta}, Lemma \ref{pdo0},  and the fact that $V\in L^\infty$. Using \eqref{lpbound3} and \eqref{lpbound4}, we have 
\begin{equation}\label{lpbound5}
	\|T_H(\sum_{2^\ell\ls\la}\beta_\ell(P)e_\la)\|_{L^{p}(\Omega)} \ls \sum_{2^\ell\ls\la}(1+\la)^{-1} 2^{\ell}2^{-\frac{\ell}{p}}\|e_\la\|_{L^{p}(\partial\Omega)} \ls \la^{-1/p}\|e_\la\|_{L^{p}(\partial\Omega)}.
\end{equation}
So we obtain \eqref{lplp} in Lemma \ref{harmonic}. Using the $L^p$ bounds in Lemma \ref{lpnorm}, we complete the proof of Theorem \ref{thm2}.

\section{Measure of nodal set }
In this section, we prove the nodal set estimates in Theorem \ref{thm1}.

First, we establish some general results for Sobolev spaces on compact manifolds. These results will be used to prove the regularity of eigenfunctions.  They are likely to be useful for future research, so we give detailed proofs for them.

Let $s>0$ and $1<p< \infty$. We can define the Sobolev norm on $M$ by local coordinates
\begin{equation}\label{norm1}
	\|f\|_{W^{s,p}(M)}=\sum_\nu\|(I-\Delta)^{s/2}f_\nu\|_{L^p(\mathbb{R}^n)}.
\end{equation}
where $f_\nu=(\phi_\nu f)\circ\kappa_\nu ^{-1}$, and $\{\phi_\nu\}$ is a partition of unity subordinate to a finite covering $M=\cup \Omega_\nu$, and $\kappa_\nu:\Omega_\nu\to \tilde \Omega_\nu\subset \mathbb{R}^n$ is the coordinate map. For simplicity, we sometimes do not distinguish between  $\Omega_{\nu}$ and $\tilde \Omega_\nu$, $f_\nu$ and $\phi_\nu f$, since they are identical up to the coordinate map.

Moreover, we can also define another Sobolev norm by pseudo-differential operators
\begin{equation}\label{norm2}
	\|f\|_{H^{s,p}(M)}=\|(I-\Delta_g)^{s/2}f\|_{L^p(M)}.
\end{equation}
By \cite[Theorem 4.3.1]{fio}, we see that $(I-\Delta_g)^{s/2}$ is an invertible pseudo-differential operator of order $s$ with elliptic principal symbol $(\sum g^{jk}(x)\xi_j\xi_k)^{s/2}$. Moreover, if we replace  $(I-\Delta_g)^{s/2}$ in \eqref{norm2} by any invertible pseudo-differential operator of order $s$, then it still gives a comparable norm, by Lemma \ref{pdo0}.

We  prove that these two Sobolev norms are equivalent.
\begin{proposition}\label{sobolev}
	For $s>0$ and $1<p<\infty$, we have
	\[\|f\|_{W^{s,p}(M)}\approx \|f\|_{H^{s,p}(M)}.\]
	The implicit constants are independent of $f$.
\end{proposition}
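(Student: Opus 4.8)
The plan is to establish the equivalence $\|f\|_{W^{s,p}(M)}\approx \|f\|_{H^{s,p}(M)}$ by a standard coordinate-localization argument, reducing matters to the mapping properties of pseudo-differential operators on $\mathbb{R}^n$ (Lemma~\ref{pdo0}) together with the invertibility of $(I-\Delta_g)^{s/2}$ noted above. First I would record the basic tools: $(I-\Delta_g)^{s/2}\in OPS^s$ is elliptic and invertible with parametrix in $OPS^{-s}$, so $\|f\|_{H^{s,p}(M)}$ is comparable to $\|Qf\|_{L^p(M)}$ for \emph{any} invertible $Q\in OPS^s$; likewise $(I-\Delta)^{s/2}$ on $\mathbb{R}^n$ is invertible of order $s$ with symbol $(1+|\xi|^2)^{s/2}$. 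The strategy is then to compare the two sides after multiplying by the partition of unity $\{\phi_\nu\}$ and pushing forward by the charts $\kappa_\nu$.

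\textbf{Direction $\|f\|_{W^{s,p}}\lesssim\|f\|_{H^{s,p}}$.} For each $\nu$, write $f_\nu=(\phi_\nu f)\circ\kappa_\nu^{-1}$ and express $(I-\Delta)^{s/2}f_\nu$. The key observation is that the operator $g\mapsto \big((I-\Delta)^{s/2}\big((\phi_\nu g)\circ\kappa_\nu^{-1}\big)\big)$, compared against $\big((I-\Delta_g)^{s/2}g\big)\circ\kappa_\nu^{-1}$, differs by an operator of order $\le s-1$ acting on $g$: both $(I-\Delta)^{s/2}$ in the Euclidean chart and the push-forward of $(I-\Delta_g)^{s/2}$ are elliptic pseudo-differential operators of order $s$ with the \emph{same} principal symbol up to the change of coordinates, because the principal symbol of $(I-\Delta_g)^{s/2}$ is $(\sum g^{jk}\xi_j\xi_k)^{s/2}$ which transforms correctly. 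Hence I would write, schematically, $(I-\Delta)^{s/2}f_\nu = \psi_\nu\cdot\big[(I-\Delta_g)^{s/2}f\big]\circ\kappa_\nu^{-1}$ plus a commutator term $R_\nu f$, where $\psi_\nu$ is a cutoff equal to $1$ on $\mathrm{supp}\,\phi_\nu$ and $R_\nu$ is (in a chart) a pseudo-differential operator of order $s-1$ composed with multiplication by smooth compactly supported functions. Applying Lemma~\ref{pdo0} (more precisely, its $OPS^0$ form after factoring out an invertible $OPS^{s-1}$, or directly that $OPS^{s-1}$ maps $H^{s,p}\to L^p$) controls the remainder by $\|f\|_{H^{s-1,p}(M)}\lesssim\|f\|_{H^{s,p}(M)}$, and the main term by $\|(I-\Delta_g)^{s/2}f\|_{L^p(M)}$. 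Summing over the finitely many $\nu$ gives the bound.

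\textbf{Direction $\|f\|_{H^{s,p}}\lesssim\|f\|_{W^{s,p}}$.} Here I would use $f=\sum_\nu \phi_\nu f$ and estimate $\|(I-\Delta_g)^{s/2}(\phi_\nu f)\|_{L^p(M)}$ for each $\nu$. Transplanting to the chart, $(I-\Delta_g)^{s/2}(\phi_\nu f)$ becomes, up to the coordinate map, an order-$s$ pseudo-differential operator applied to $f_\nu$, with symbol supported (in $x$) in $\tilde\Omega_\nu$; composing with the Euclidean parametrix $(I-\Delta)^{-s/2}$ this is an $OPS^0$ operator applied to $(I-\Delta)^{s/2}f_\nu$, which by Lemma~\ref{pdo0} is bounded on $L^p(\mathbb{R}^n)$ by $\|(I-\Delta)^{s/2}f_\nu\|_{L^p}$. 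Summing over $\nu$ yields $\|f\|_{H^{s,p}(M)}\lesssim\sum_\nu\|(I-\Delta)^{s/2}f_\nu\|_{L^p(\mathbb{R}^n)}=\|f\|_{W^{s,p}(M)}$.

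The main obstacle, and the step deserving the most care, is the bookkeeping of how the operators $(I-\Delta_g)^{s/2}$ and multiplication by $\phi_\nu$ interact under the change of variables $\kappa_\nu$: one must verify that conjugating an elliptic $OPS^s$ operator by a diffeomorphism and cutting off by a compactly supported function again lands in the pseudo-differential calculus with the expected principal symbol, and that all commutators genuinely drop order by one so that the $OPS^0$ boundedness of Lemma~\ref{pdo0} applies after factoring out the invertible elliptic factor. This is classical (it is essentially the invariance of the pseudo-differential calculus under coordinate changes, cf.\ \cite[Chapter 4]{fio}), but since the paper promises detailed proofs I would spell out the symbol computation: the push-forward of $(I-\Delta_g)^{s/2}$ has principal symbol $\big(\sum \tilde g^{jk}(x)\xi_j\xi_k\big)^{s/2}$ with $\tilde g$ the metric in the new coordinates, which for an elliptic metric is comparable to $(1+|\xi|^2)^{s/2}$, so the ratio of symbols lies in $S^0$ and generates an $L^p$-bounded operator. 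One should also note the iteration point: the estimates involve $\|f\|_{H^{s-1,p}}$ and one can either induct on $s$ down to the already-known case $s\in(0,1]$ or simply absorb $H^{s-1,p}\hookrightarrow H^{s,p}$ on the compact manifold; either way no circularity arises.
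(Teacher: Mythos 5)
Your overall strategy — localize via a partition of unity and charts, then compare $(I-\Delta)^{s/2}$ on $\mathbb{R}^n$ with the chart representation of $(I-\Delta_g)^{s/2}$ using the $L^p$ boundedness of $OPS^0$ — is the same as the paper's. However, there is a genuine error in the first direction of your argument. You claim that $(I-\Delta)^{s/2}$ in the chart and the push-forward of $(I-\Delta_g)^{s/2}$ ``differ by an operator of order $\le s-1$'' because they ``have the same principal symbol up to the change of coordinates.'' This is false. Their principal symbols are $|\xi|^s$ and $(\sum g^{jk}(x)\xi_j\xi_k)^{s/2}$ respectively, which are comparable (so both operators are elliptic) but are not equal unless the metric is Euclidean in the chart; hence the \emph{difference} is still of order $s$, not $s-1$, and writing $(I-\Delta)^{s/2}f_\nu = \psi_\nu\cdot[(I-\Delta_g)^{s/2}f]\circ\kappa_\nu^{-1} + R_\nu f$ with $R_\nu\in OPS^{s-1}$ is wrong. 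Confusingly, you state the correct fact a few lines later (``the ratio of symbols lies in $S^0$''), but the additive-error presentation of the first direction does not use it. The correct relation is multiplicative: there is a $Q_\nu\in OPS^0$ (essentially the quotient symbol, corrected iteratively by the Kohn–Nirenberg calculus) such that $P_1 - Q_\nu P\in OPS^0$; this is exactly what the paper's Lemma~\ref{localinverse} constructs and applies to $P_\nu=\psi_\nu(I-\Delta)^{s/2}$ and $P_{\nu1}=\psi_{\nu1}(I-\Delta_g)^{s/2}M_{\eta_\nu}$.

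A secondary omission: the paper is careful to split off the ``nonlocal'' contributions, i.e. $(1-\psi_\nu)(I-\Delta)^{s/2}M_{\phi_\nu}$ and its manifold analogue. Because the cutoffs $1-\psi_\nu$ and $\phi_\nu$ have disjoint supports, these operators have rapidly decaying kernels and contribute only $\|f_\nu\|_{L^p}$; without this step the local ellipticity argument via Lemma~\ref{localinverse} does not apply, since the symbol of $P_\nu$ is only elliptic on $\bar U_\nu$. You should incorporate both fixes: replace the additive order-$(s-1)$ error by the $OPS^0$ factor obtained from a local parametrix construction, and treat the off-diagonal (nonlocal) pieces separately using kernel decay.
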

As a corollary, different partitions of unity and such coordinate atlases in the definition \eqref{norm1} give comparable norms. When $p=2$, Proposition \ref{sobolev} follows from Plancherel theorem and the $L^2$-boundedness of zero order pseudo-differential operators, see e.g. \cite[section 4.2]{hangzhou}. The case $p\ne2$ is more complicated, and it is very difficult to find good references. To prove this on our own, we start with the following key lemma. Roughly speaking, this lemma establishes a ``linear relation'' between any two pseudo-differential operators of the same order.

\begin{lemma}\label{localinverse}
	Let $s>0$. Let $V_1,\  V,\ \Omega$ be open sets such that $\bar{V}_1 \subset V \subset \Omega$. Let $P_1,\  P\in OPS^s$ with symbols supported in $V_1,\ V$ respectively.  If the principal symbol $\bar p(x,\xi)$ of $P$ is elliptic on $\bar V_1$, i.e., for any $ x\in \bar V_1,$
	\[\bar p(x,\xi)\ne0,\ \forall \xi\ne0,\]
	then there is a $Q\in OPS^0$  with symbol supported in $V_1$ such that
	\begin{equation}\label{Qlocal}
		P_1 - QP \in OPS^0.
	\end{equation}
\end{lemma}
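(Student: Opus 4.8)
The plan is to construct $Q$ as an approximate local left parametrix for $P$, cut off appropriately, and check that the error is order $0$. First I would build the symbol of $Q$ by the usual elliptic parametrix recursion: since $\bar p(x,\xi)$ is elliptic on $\bar V_1$, choose $\chi\in C_0^\infty(V)$ with $\chi\equiv 1$ on a neighborhood of $\bar V_1$, and set the principal symbol of $Q$ to be $q_{-s}(x,\xi)=\chi(x)\,p(x,\xi)^{-1}$ for $|\xi|\ge 1$ (smoothed out near $\xi=0$), so that $q_{-s}$ is a symbol of order $-s$. Then $QP$ has principal symbol $\chi(x)\equiv 1$ on $\bar V_1$, so $P_1-QP\in OPS^{s-1}$ already gains one order; the point is to iterate. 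I would correct $Q$ by lower-order terms $q_{-s-1}+q_{-s-2}+\cdots$, chosen recursively (solving the transport-type algebraic equations coming from the symbol composition formula $\sigma(QP)\sim\sum_\alpha \frac{1}{\alpha!}\partial_\xi^\alpha q\, D_x^\alpha p$) so that after $N$ steps $P_1-Q_N P\in OPS^{s-N}$, and then take $Q\sim\sum q_{-s-j}$ by Borel summation (asymptotic completeness of symbols). All symbols stay supported in $V_1$ because we multiply by cutoffs supported there at each stage — this uses that $P_1$ has symbol supported in $V_1$, so the equations we must solve only need to hold on $V_1$, where $p$ is invertible.

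The key subtlety is that $P_1\in OPS^s$ and $QP\in OPS^s$ a priori, so $P_1-QP$ is only obviously in $OPS^s$; I need to show the construction actually forces it into $OPS^0$, i.e. I must kill the symbol of $P_1-QP$ down to order $0$, not merely down to order $-\infty$. So at stage $j$ ($0\le j\le s$, reading $s$ as $\lceil s\rceil$), having arranged $P_1-Q_{j-1}P\in OPS^{s-j}$, I compare principal symbols at order $s-j$: the order-$(s-j)$ symbol of $P_1-Q_{j-1}P$ is some $r_{s-j}(x,\xi)$ supported in $V_1$, and I choose the next term $q_{-j}$ of $Q$ so that its leading contribution to $\sigma(Q_j P)$ at order $s-j$ equals $r_{s-j}$; this is solvable because it amounts to dividing by the elliptic symbol $p$ on $\mathrm{supp}\, r_{s-j}\subset V_1$. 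Once $j$ exceeds $s$ we stop: $P_1-Q_{\lceil s\rceil}P\in OPS^{s-\lceil s\rceil}\subset OPS^0$, and we take $Q=Q_{\lceil s\rceil}\in OPS^0$ (note $q_{-j}$ has order $-j\le 0$, and in fact $Q$'s leading term has order $-s<0$, so certainly $Q\in OPS^0$). If $s$ is not an integer the same works with $\lceil s\rceil$ steps, landing in $OPS^{s-\lceil s\rceil}$, which has nonpositive order.

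The main obstacle — more bookkeeping than conceptual — is controlling the supports through the composition: $\sigma(QP)$ is, modulo smoothing, supported where both symbols are, but the asymptotic expansion involves $x$-derivatives of $p$ and $\xi$-derivatives of $q$, and one must verify that choosing each $q_{-j}$ with support in a slightly shrunk neighborhood inside $V_1$ (using a nested sequence of cutoffs $\chi_0\succ\chi_1\succ\cdots$ all supported in $V_1$ and all $\equiv 1$ near $\bar V_1$) keeps everything supported in $V_1$ while not disturbing the symbol identities on $\bar V_1$. I would also remark that the smoothing remainder (the $OPS^{-\infty}$ error from Borel summation and from the $\mathrm{supp}$-disjointness of $P$, $P_1$ away from overlaps) is harmless since $OPS^{-\infty}\subset OPS^0$. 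Finally, that $Q$ is genuinely a pseudodifferential operator of order $0$ on the manifold follows from \cite[Theorem 4.3.1]{fio} together with the symbol construction, and its $L^p$-boundedness for the later application is Lemma \ref{pdo0}.
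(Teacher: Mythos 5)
There is a genuine error in the setup of your first paragraph that would make the construction collapse. You set the leading symbol of $Q$ to be $q_{-s}(x,\xi)=\chi(x)\,p(x,\xi)^{-1}\in S^{-s}$, i.e.\ you begin building a \emph{left parametrix} for $P$. Then $QP$ has principal symbol $\chi(x)$ of order $0$, but $P_1$ has order $s>0$, so $P_1-QP$ still has the same order-$s$ leading symbol $p_1$ as $P_1$; nothing has been cancelled, and the claim that ``$P_1-QP\in OPS^{s-1}$ already gains one order'' is simply false. A parametrix of $P$ is the wrong object here: the lemma asks for $Q\in OPS^0$, which by order-counting ($s+0=s$) must have leading symbol $p_1(x,\xi)/\bar p(x,\xi)\in S^0$ so that $QP$ reproduces $P_1$ to leading order. (One could rescue your order-$(-s)$ parametrix $E$ by taking $Q=P_1E\in OPS^0$, but you do not do this, and it is less direct.)

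Your second paragraph does reach essentially the right recursion — at stage $j$ you divide the order-$(s-j)$ remainder by the elliptic symbol $\bar p$ to obtain $q_{-j}\in S^{-j}$, so $Q=\sum_{j=0}^{\lceil s\rceil-1}q_{-j}$ has leading term $q_0$ of order $0$ — but you then revert to the parametrix picture with the remark that ``$Q$'s leading term has order $-s<0$,'' which contradicts $q_0\in S^0$ and shows the confusion between the two constructions persists. The paper's proof is the corrected version of your second paragraph, stated cleanly: set $Q_0$ to have symbol $\varphi(\xi)p_1(x,\xi)/\bar p(x,\xi)\in S^0$, put $R_{-1}=P_1-Q_0P\in OPS^{s-1}$ by Kohn--Nirenberg, and iterate $Q_{-i}$ with symbol $\varphi\, r_i/\bar p\in S^{-i}$ (where $r_i$ is the full symbol of $R_{-i}$) for $\lceil s\rceil$ steps, at which point the remainder lies in $OPS^{s-\lceil s\rceil}\subset OPS^0$; no Borel summation is needed. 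The support bookkeeping you worry about is automatic in this formulation: each $R_{-i}$ has symbol supported in $V_1$ because it is a sum of derivatives of $p_1$ (or of the previous $q_{-k}$) multiplied into derivatives of $p$, so no nested sequence of cutoffs is required.
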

\begin{proof}
	Let $p_1(x,\xi)$ be the symbols of $P_1$ on $\Omega$. Since $\bar p(x,\xi)$ is elliptic on the support of $p_1(x,\xi)$, we have 
	\begin{equation}
		\frac{\varphi(\xi) p_1(x,\xi)}{\bar{p}(x,\xi)}\in S^0
	\end{equation}
where $\varphi\in C^\infty$ vanishes near the origin but equals one near infinity.	Denote the associated zero order pseudo-differential operator by  $Q_0$. Let $R_{-1} = P_1 - Q_0P$. Then by the Kohn-Nirenberg theorem (see e.g. \cite[Theorem 3.1.1]{fio}), we have $R_{-1}\in OPS^{s-1}$. The symbol of $R_{-1}$ is supported in $V_1$. 
If $s\le1$, then we are done by setting $Q=Q_0$, since $P_1-Q_0P\in OPS^{s-1}\subset OPS^0$.

Next, it remains to consider $s>1$. Let $k = \lceil s\rceil\ge2$. We need to construct $Q_{-i} \in OPS^{-i},\ R_{-i-1} \in OPS^{s-i-1}$ recursively for $1\le i\le k-1$. If $r_i(x,\xi)$ is the symbol of $R_{-i}$, and  $Q_{-i}$ has the symbol
	\begin{equation}
		\frac{\varphi(\xi) r_i(x,\xi)}{\bar{p}(x,\xi)}\in S^{-i},
	\end{equation}
then using the Kohn-Nirenberg theorem we have $R_{-i-1} = R_{-i} - Q_{-i}P\in OPS^{s-i-1}$. The symbol of $R_{-i-1}$ is  supported in $V_1$.
Let $$Q = \sum_{i=1}^{k-1} Q_{-i}.$$  The symbol of $Q$ is supported in $V_1$. Then $ P_1 - QP=R_{-k} \in OPS^{s-k} \subset OPS^0$.
\end{proof}

\noindent \textbf{Proof of Proposition \ref{sobolev}.}
	The basic idea is to verify these two equivalences
	\begin{equation}\label{idea}\|(I-\Delta_g)^{s/2}f\|_{L^p(M)} \approx \sum_\nu \|(I-\Delta_g)^{s/2}f_\nu\|_{L^p(M)}\approx \sum_\nu \|(I-\Delta)^{s/2}f_\nu\|_{L^p(\mathbb{R}^n)}.\end{equation}
	The first equivalence is straightforward. Indeed,
	The relation $\ls$ follows from Minkowski inequality. And for the other direction, we use Lemma \ref{pdo0} to see that
	\begin{equation} \label{splitls}
		\begin{aligned}
			\|(I-\Delta_g)^{s/2}f_\nu\|_{L^p(M)} &= \|(I-\Delta_g)^{s/2}M_{\phi_\nu}(I-\Delta_g)^{-s/2}((I-\Delta_g)^{s/2}f)\|_{L^p(M)}\\
			&\ls \|(I-\Delta_g)^{s/2}f\|_{L^p(M)},
		\end{aligned}
	\end{equation}
where $M_{\phi_\nu}$ stands for the operator of multiplying by $\phi_\nu(x)$. 
	Summing up of \eqref{splitls}  over $\nu$ we obtain the first equivalence in \eqref{idea}.
	
	To prove the second equivalence in \eqref{idea}, it suffices to show that for each $\nu$
	\begin{equation}\label{equi2}
		\|(I-\Delta_g)^{s/2}f_\nu\|_{L^p(M)}\approx \|(I-\Delta)^{s/2}f_\nu\|_{L^p(\mathbb{R}^n)}.
	\end{equation}

		For each $\Omega_\nu,\ \phi_\nu\in C_0^\infty(\Omega_\nu)$ in \eqref{norm1}, we can find open subsets $V_\nu,\ U_\nu,\ W_\nu$ of $\Omega_\nu$, and cutoff functions $\psi_\nu \in C_0^\infty(V_\nu),\ \psi_{\nu1} \in C_0^\infty(U_\nu),\  \psi_{\nu2} \in C_0^\infty(W_\nu),\  \eta_\nu \in C_0^\infty(\Omega_{\nu})$ such that
$$
		\supp\phi_\nu \subset\subset U_{\nu} \subset V_\nu \subset\subset W_\nu
$$
	and   $\psi_{\nu} \equiv 1$ on $\bar{U}_\nu$, $\psi_{\nu2} \equiv  1$ on $\bar{V}_{\nu}$,  $\eta_\nu \equiv  1$ on $\bar{W}_\nu$.
	
	Let $P_\nu=\psi_{\nu}(I-\Delta)^{s/2},\   P_{\nu1}=\psi_{\nu1}(I-\Delta_g)^{s/2}M_{\eta_\nu}$. We see that $M_{\eta_\nu}\in OPS^0$, and $ P_\nu, P_{\nu1}\in OPS^s$. Note that the principal symbol of $P_\nu$ is $\psi_\nu(x)|\xi|^s$, which is elliptic on $\bar{U}_\nu$.  By Lemma \ref{localinverse}, we can find $Q_{\nu1} \in OPS^0$ supported in $U_\nu$ such that
$$
		P_{\nu1} - Q_{\nu1}P_{\nu} \in OPS^0.
$$
	Then by Lemma \ref{pdo0} we obtain the local estimate
	\begin{equation}\label{small1}
		\|P_{\nu1}(f_\nu)\|_{L^p(\Omega_\nu)} = \|( P_{\nu1} - Q_{\nu1}P_{\nu})(f_\nu) + Q_{\nu1}P_\nu(f_\nu)\| _{L^p(\Omega_\nu)}
		\ls \|f_\nu\|_{L^p(\Omega_\nu)} + \|P_\nu f_\nu\|_{L^p(\Omega_\nu)}.
	\end{equation}

	Moreover, if $ P_{\nu2}=\psi_{\nu2}(I-\Delta_g)^{s/2}M_{\eta_\nu}$, then $P_{\nu2}$ has the principal symbol $\psi_{\nu2}(x)(\sum g^{jk}(x)\xi_j\xi_k)^{s/2}$, which is elliptic on $\bar{V}_\nu$.
	 Similarly, by applying Lemma \ref{localinverse} to $P_\nu$ and $P_{\nu2}$, we obtain the local estimate
	\begin{equation}\label{large1}
		\|P_\nu(f_\nu)\|_{L^p(\Omega_\nu)} \ls \|f_\nu\|_{L^p(\Omega_\nu)} + \| P_{\nu2} f_\nu\|_{L^p(\Omega_\nu)}.
	\end{equation}
	
	Next, we handle the nonlocal part.
	We write
	\begin{equation}
		(1-\psi_\nu)(I-\Delta)^{s/2}f_\nu = (1-\psi_\nu)(I-\Delta)^{s/2}(\phi_\nu\eta_\nu f)
		= (1-\psi_\nu)(I-\Delta)^{s/2}M_{\phi_\nu}(\eta_\nu f).
	\end{equation}
	Since dist($\supp(1-\psi_\nu),\ \supp \phi_\nu ) = \delta_\nu > 0$,	using integration by parts, we see that the kernel of $(1-\psi_\nu)(I-\Delta)^{s/2}M_{\phi_\nu}$ satisfies
\[
		\Big|\int_{\mathbb{R}^n} (1-\psi_\nu(x))e^{i(x-y)\cdot \xi}\phi_\nu(y)(1+|\xi|^2)^{s/2}d\xi\Big|
		\ls (1+|x-y|)^{-N},\ \forall N.
\]
By Young's inequality, we get
	\begin{equation}\label{localeuc}
		\|(1-\psi_\nu)(I-\Delta)^{s/2}(f_\nu))\|_{L^p(\mathbb{R}^n)} \ls \|\eta_\nu f\|_{L^p(\mathbb{R}^n)} = \| f_\nu\|_{L^p(\Omega_\nu)}.
	\end{equation}
	
	Similarly, using the fact that the kernel of pseudo-differential operators on compact manifolds is smooth away from diagonal, we have
	\begin{equation}\label{small2}
		\|(1-\psi_{\nu1})(I-\Delta_g)^{s/2}(f_\nu)\|_{L^p(M)} = \|(1-\psi_{\nu1})(I-\Delta_g)^{s/2}(\phi_\nu\eta_\nu f)\|_{L^p(M)}
		\ls \|\eta_\nu f\|_{L^p(M)} = \| f_\nu\|_{L^p(\Omega_\nu)}
	\end{equation}
	and
	\begin{equation}\label{large2}
		\|(1-\psi_{\nu2})(I-\Delta_g)^{s/2}(f_\nu)\|_{L^p(M)} \ls \| f_\nu\|_{L^p(\Omega_\nu)}.
	\end{equation}
	
	Combining \eqref{small1} with the nonlocal estimates \eqref{localeuc} and \eqref{small2}, we obtain
	\begin{equation}\label{small}
		\begin{aligned}
			\|(I-\Delta_g)^{s/2}f_\nu\|_{L^p(M)} &\ls \|(1-\psi_{\nu1})(I-\Delta_g)^{s/2}f_\nu\|_{L^p(M)} + \|\psi_{\nu1}(I-\Delta_g)^{s/2}f_\nu\|_{L^p(M)} \\
			&\ls \|f_\nu\|_{L^p(\Omega_\nu)} + \|\psi_{\nu1}(I-\Delta_g)^{s/2}f_\nu\|_{L^p(\Omega_\nu)}\\
			&= \|f_\nu\|_{L^p(\Omega_\nu)} + \| P_{\nu1}f_\nu\|_{L^p(\Omega_\nu)}\\
			&\ls \|f_\nu\|_{L^p(\Omega_\nu)} + \|P_{\nu}f_\nu\|_{L^p(\Omega_\nu)}\\
			&=  \|f_\nu\|_{L^p(\Omega_\nu)} + \|(I-\Delta)^{s/2}f_\nu - (1-\psi_{\nu})(I-\Delta)^{s/2}f_\nu\|_{L^p(\Omega_\nu)}\\
			&\ls  \|f_\nu\|_{L^p(\Omega_\nu)} + \|(I-\Delta)^{s/2}f_\nu\|_{L^p(\mathbb{R}^n)}\\
			&\ls \|(I-\Delta)^{s/2}f_\nu\|_{L^p(\mathbb{R}^n)}.
		\end{aligned}
	\end{equation}
Here in the last step we apply Lemma \ref{pdo0} to $(I-\Delta)^{-s/2}\in OPS^0$.

Similarly,	combining \eqref{large1} with the nonlocal estimates \eqref{localeuc} and \eqref{large2}, we have
\begin{equation}\label{large}
	\begin{aligned}
		\|(I-\Delta)^{s/2}f_\nu\|_{L^p(\mathbb{R}^n)} &\ls \|(1-\psi_{\nu})(I-\Delta)^{s/2}f_\nu\|_{L^p(\mathbb{R}^n)} + \|\psi_{\nu}(I-\Delta)^{s/2}f_\nu\|_{L^p(\mathbb{R}^n)} \\
		&\ls \|f_\nu\|_{L^p(\Omega_\nu)} + \|\psi_{\nu}(I-\Delta)^{s/2}f_\nu\|_{L^p(\Omega_\nu)}\\
		&= \|f_\nu\|_{L^p(\Omega_\nu)} + \| P_{\nu}f_\nu\|_{L^p(\Omega_\nu)}\\
		&\ls \|f_\nu\|_{L^p(\Omega_\nu)} + \|P_{\nu2}f_\nu\|_{L^p(\Omega_\nu)}\\
		&=  \|f_\nu\|_{L^p(\Omega_\nu)} + \|(I-\Delta_g)^{s/2}f_\nu - (1-\psi_{\nu2})(I-\Delta_g)^{s/2}f_\nu\|_{L^p(\Omega_\nu)}\\
		&\ls  \|f_\nu\|_{L^p(\Omega_\nu)} + \|(I-\Delta_g)^{s/2}f_\nu\|_{L^p(M)}\\
		&\ls\|(I-\Delta_g)^{s/2}f_\nu\|_{L^p(M)}.
	\end{aligned}
\end{equation}
In the last step we used Lemma \ref{pdo0} for $(I-\Delta_g)^{-s/2}\in OPS^0$. So we finish the proof of \eqref{equi2}. Thus, the proof of Proposition \ref{sobolev} is complete.

Let $[\D,V]=\D V-V\D$. We need to following commutator estimate.
\begin{lemma}\label{taylor}
	Let $1<p<\infty$. Given $P\in OPS^1$,
	\[\|[P, f] u\|_{L^p} \le C\|f\|_{{\rm Lip}^1}\|u\|_{L^p}.\]
	Here $\|f\|_{{\rm Lip}^1}$ is the Lipschitz norm of $f$.
\end{lemma}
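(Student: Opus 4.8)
The plan is to reduce to $\mathbb{R}^n$ and exhibit $[P,f]$ as a Calder\'on--Zygmund operator whose norm is controlled by $\|f\|_{{\rm Lip}^1}$. First I would localize. Fix a finite partition of unity $1=\sum_\nu\phi_\nu$ subordinate to a covering of $M$ by coordinate charts, and write $[P,f]=\sum_{\nu,\mu}M_{\phi_\nu}\,[P,f]\,M_{\phi_\mu}$, where $M_g$ denotes multiplication by $g$. When $\supp\phi_\nu\cap\supp\phi_\mu=\emptyset$, pseudolocality of $P$ shows that $M_{\phi_\nu}PM_{\phi_\mu}$ has a smooth --- hence, since $M$ is compact, uniformly bounded --- kernel, so that $M_{\phi_\nu}[P,f]M_{\phi_\mu}=M_{\phi_\nu}PM_{f\phi_\mu}-M_{f\phi_\nu}PM_{\phi_\mu}$ is bounded on every $L^p$ with norm $\ls\|f\|_{L^\infty}$. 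For the remaining pairs both cutoffs lie in a single chart, so after passing to local coordinates it suffices to prove: if $P\in OPS^1$ has symbol compactly supported in $x$ and $f\in{\rm Lip}^1(\mathbb{R}^n)$ is compactly supported, then $\|[P,f]u\|_{L^p(\mathbb{R}^n)}\ls\|f\|_{{\rm Lip}^1}\|u\|_{L^p(\mathbb{R}^n)}$ for $1<p<\infty$. (Since constants commute with $P$, in fact only $\|\nabla f\|_{L^\infty}$ matters here.)

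Next I would record kernel bounds. Let $K_P(x,y)$ be the Schwartz kernel of $P$; since $P\in OPS^1$, off the diagonal $K_P$ is smooth and satisfies $|K_P(x,y)|\ls|x-y|^{-n-1}$ and $|\nabla_{x,y}K_P(x,y)|\ls|x-y|^{-n-2}$, by integration by parts in the oscillatory integral defining $K_P$ (cf. the kernel estimates in Lemma~\ref{pdo}). The kernel of $[P,f]$ is $K(x,y)=K_P(x,y)(f(y)-f(x))$; combining the above with $|f(x)-f(y)|\le\|\nabla f\|_{L^\infty}|x-y|$ and the product rule yields the size bound $|K(x,y)|\ls\|f\|_{{\rm Lip}^1}|x-y|^{-n}$ and the H\"ormander-type regularity bound $|\nabla_{x,y}K(x,y)|\ls\|f\|_{{\rm Lip}^1}|x-y|^{-n-1}$. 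Hence $[P,f]$ has a Calder\'on--Zygmund kernel with constant $\ls\|f\|_{{\rm Lip}^1}$, and by the standard Calder\'on--Zygmund theory it will suffice to prove the single estimate $\|[P,f]\|_{L^2\to L^2}\ls\|f\|_{{\rm Lip}^1}$, after which the bounds for all $1<p<\infty$ follow with the same constant.

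This $L^2$ bound is the heart of the matter --- it is in essence Calder\'on's commutator theorem --- and is the step I expect to be the main obstacle. One route is the $T(1)$ theorem: the weak boundedness property for $[P,f]$ follows from the rescaled kernel bounds for the Littlewood--Paley pieces of $P$ (as in Lemma~\ref{pdo}), while $[P,f]\,1=Pf-(P1)f$ lies in $BMO$ with norm $\ls\|\nabla f\|_{L^\infty}$, because $P1=p(x,0)\in L^\infty$ and, factoring $P=Q(1-\Delta)^{1/2}$ with $Q\in OPS^0$, the operator $P$ carries Lipschitz functions into $BMO$ (via the mapping properties of the Riesz transforms and of zeroth-order operators on $BMO$); the same reasoning applies to $[P,f]^*1$. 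Alternatively one decomposes $P=\sum_{j\ge0}P^{(j)}$ into pieces frequency-localized to $|\xi|\sim2^j$, writes $f=\sum_k f_k$ by Littlewood--Paley, and analyzes the resulting low--high, high--low and high--high interactions: in the main low--high piece, for $k\ll j$ the function $f_k$ is essentially its first-order Taylor polynomial on the scale $2^{-j}$ of the kernel of $P^{(j)}$, so that $|f_k(x)-f_k(y)|\ls\|\nabla f\|_{L^\infty}|x-y|$ there, which provides exactly the $\ell^2$-summable gain, the remaining interactions being handled by Cotlar--Stein almost-orthogonality. Either way one obtains $\|[P,f]\|_{L^2\to L^2}\ls\|f\|_{{\rm Lip}^1}$; together with the Calder\'on--Zygmund kernel bounds and the localization of the first step, this proves the lemma. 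This commutator estimate is classical; see e.g.\ \cite{Taylor}.
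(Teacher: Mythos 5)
The paper does not prove this lemma: it simply cites it as a known result, pointing to Proposition~1.3 of Taylor \cite{taylor}, with the remark that the estimate goes back to Calder\'on \cite{cald65} for classical first-order operators and to Coifman--Meyer \cite{cm} for $OPS^1$. Your proposal, by contrast, is a self-contained sketch of essentially the classical proof that those references supply, and the structure you lay out is the right one: localize to a chart, observe that $[P,f]$ has a Calder\'on--Zygmund kernel with constant $\ls\|f\|_{{\rm Lip}^1}$ (size $|x-y|^{-n}$ from trading one power of $|x-y|^{-1}$ in the $OPS^1$ kernel against $|f(x)-f(y)|\ls\|\nabla f\|_\infty|x-y|$, and H\"ormander regularity from the product rule), reduce the full $L^p$ range to the $L^2$ bound via Calder\'on--Zygmund theory, and obtain the $L^2$ bound from either $T(1)$ or a paraproduct decomposition. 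The main places where your sketch glosses over real work are in that last $L^2$ step: the weak boundedness property for $[P,f]$ is not automatic from the $|x-y|^{-n}$ size bound (which is non-integrable) and requires exploiting the cancellation $f(y)-f(x)$ against the dyadic pieces of $P$; and in the paraproduct route the bound $\|[P^{(j)},f_k]\|_{L^2\to L^2}\ls\|\nabla f\|_\infty$ you derive for the low--high pieces is only \emph{uniform} in $j$, so the claimed ``$\ell^2$-summable gain'' must actually come from frequency-support orthogonality of the outputs $[P^{(j)},f_k]\Delta_j u$ (or Cotlar--Stein, as you do mention), not from a decaying operator-norm bound. With those points filled in the argument is sound, and it matches the proofs in the literature the paper cites.
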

Here the $L^p$ norm can be taken on $\mathbb{R}^n$ and compact manifolds. See Proposition 1.3 in Taylor \cite{taylor}. The result was proven in Calder\'on \cite{cald65} for classical first-order pseudodifferential	operators and by  Coifman-Meyer \cite{cm} for $OPS^1$.

 \begin{lemma}\label{C1} If $V\in Lip^1(\M)$, then $e_\la \in C^{1,\alpha}(M)$, for any $0< \alpha < 1$. 
\end{lemma}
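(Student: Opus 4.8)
The plan is to bootstrap regularity from the eigenfunction equation $(\D + V)e_\la = \la e_\la$ on $M$, using the elliptic smoothing of $\D = \sqrt{-\Delta_g} + P_0 \in OPS^1$, the Sobolev space machinery of Proposition~\ref{sobolev}, and the commutator estimate of Lemma~\ref{taylor}. First I would observe that $e_\la \in L^2(M) \subset H^{0,2}(M)$ to start, and that the eigenfunction equation rewrites as
\[
\D e_\la = (\la - V)e_\la,
\]
with $V \in \mathrm{Lip}^1(M) \subset L^\infty(M)$. Since $\D$ is an invertible elliptic operator of order $1$ modulo smoothing (or one can work with $(1 + \D)$, which is genuinely invertible as a map $H^{s+1,p} \to H^{s,p}$ for all $s$ by Lemma~\ref{pdo0} and Proposition~\ref{sobolev}), applying its (order $-1$) parametrix to the identity $(1+\D)e_\la = (1 + \la - V)e_\la$ gains one derivative: since the right-hand side lies in $L^2$, we get $e_\la \in H^{1,2}(M)$.

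Next I would run the standard bootstrap. At each stage we know $e_\la \in H^{s,p}$ for some $s \ge 0$, $1 < p < \infty$; the issue is that $V$ is only Lipschitz, so multiplication by $V$ preserves $H^{s,p}$ only for $s \le 1$, and this is exactly where the regularity gain stalls short of full smoothness. Concretely: once $e_\la \in H^{1,p}$, the product $V e_\la$ still lies in $H^{1,p}$ (multiplication by a Lipschitz function is bounded on $H^{1,p}$ via the product/commutator structure, or directly from $\nabla(Ve_\la) = (\nabla V)e_\la + V\nabla e_\la \in L^p$), so applying the order $-1$ parametrix of $1+\D$ to $(1+\la-V)e_\la \in H^{1,p}$ gives $e_\la \in H^{2,p}$. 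By Sobolev embedding on the $n$-dimensional manifold $M$ (Proposition~\ref{sobolev} identifies $H^{s,p}(M)$ with the usual Sobolev space in local coordinates, so the classical embedding $W^{2,p}(\mathbb{R}^n) \hookrightarrow C^{1,\alpha}$ for $p > n$ applies), choosing $p$ large enough that $2 - n/p > 1$, i.e. $p > n$, and $\alpha = 1 - n/p \in (0,1)$ arbitrarily close to $1$, we conclude $e_\la \in C^{1,\alpha}(M)$ for every $0 < \alpha < 1$. To start the bootstrap at an exponent $p > n$, one first upgrades $L^2$ to $L^p$: from $e_\la \in H^{1,2}$ and Sobolev embedding $H^{1,2} \hookrightarrow L^{q}$ one raises integrability, and iterating $\D e_\la = (\la - V)e_\la$ together with $(1+\D)^{-1}\colon L^q \to H^{1,q}$ and the embeddings eventually reaches an exponent $> n$ in finitely many steps (or one simply notes $e_\la \in L^\infty$ already, by Lemma~\ref{lpnorm}, hence $e_\la \in L^p$ for all $p < \infty$, which short-circuits this).

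The main obstacle — and the reason one cannot do better than $C^{1,\alpha}$ — is precisely the limited multiplier regularity of the Lipschitz potential: the map $u \mapsto Vu$ fails to preserve $H^{s,p}$ once $s > 1$, so the bootstrap cannot be pushed past two derivatives. Where Lemma~\ref{taylor} enters is in making the $H^{1,p}$ step clean: to see that $V e_\la \in H^{1,p}$ when $e_\la \in H^{1,p}$, one can write, for $P = (1+\D)^{1/2} \in OPS^1$ (or any convenient first-order operator used to measure the $H^{1,p}$ norm), $P(Ve_\la) = VPe_\la + [P,V]e_\la$, and the commutator term is controlled by $\|[P,V]e_\la\|_{L^p} \ls \|V\|_{\mathrm{Lip}^1}\|e_\la\|_{L^p}$ via Lemma~\ref{taylor}, while the first term lies in $L^p$ since $Pe_\la \in L^p$. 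This makes the product estimate rigorous on the compact manifold without ad hoc local computations, and it is the one genuinely non-elementary ingredient; the rest is routine elliptic bootstrapping plus the Sobolev embedding furnished by Proposition~\ref{sobolev}.
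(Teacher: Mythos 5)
Your proposal is correct and is essentially the paper's argument: the paper likewise reduces to a $W^{2,p}(M)$ bound for all $1<p<\infty$ via Proposition~\ref{sobolev} and Sobolev embedding, uses $e_\la\in L^p$ for all $p$ (from Lemma~\ref{lpnorm} and compactness of $M$), invokes the eigenfunction equation together with Lemma~\ref{taylor} to control $\D(Ve_\la)$, and then bounds $\|(1+\D)^2 e_\la\|_{L^p}$; the only cosmetic difference is that the paper does this in a single expansion of $(1+\D)^2 e_\la$ rather than as a two-step bootstrap $H^{1,p}\to H^{2,p}$. (Minor slip: $(1+\D)^{1/2}$ is order $1/2$, not $1$ — you presumably mean $(I-\Delta_g)^{1/2}$, and your parenthetical ``any convenient first-order operator'' already covers this.)
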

\begin{proof}
	By Sobolev imbedding (see e.g. \cite{evans}), we only need to show $\|e_\la\|_{W^{2,p}(M)} < \infty$ for any $1<p<\infty$. Indeed, using the commutator estimate in Lemma \ref{taylor} and the equation $(\D+V)e_\la=\la e_\la$, we have
\begin{align*}
	\|\D (Ve_\la)\|_{L^p(M)}&\le \|V(\D+V) e_\la\|_{L^p(M)}+\|V^2e_\la\|_{L^p(M)}+\|[\D,V]e_\la\|_{L^p(M)}\\
	&\ls \la\|V\|_{L^\infty}\|e_\la\|_{L^p(M)}+\|V\|_{L^\infty}^2\|e_\la\|_{L^p(M)}+\|V\|_{{\rm Lip}^1}\|e_\la\|_{L^p(M)}\\
	&\ls (1+\la) \|e_\la\|_{L^p(M)}.
\end{align*}
So by Proposition \ref{sobolev}, we obtain
		\begin{align*}
			\|e_\la\|_{W^{2,p}(M)} &\approx \|(1+\D)^2 e_\la\|_{L^p(M)}\\
			&\ls  \|(1+\D)(1+\D+V) e_\la\|_{L^p(M)} +  \|\D (Ve_\la)\|_{L^p(M)} + \|V\|_{L^\infty}\|e_\la\|_{L^p(M)}\\
			&\ls (1+\la)(\|(1+\D)e_\la\|_{L^p(M)}+\|e_\la\|_{L^p(M)})\\
			&\le (1+\la)(\|(1+\D+V)e_\la\|_{L^p(M)}+\|V\|_{L^\infty}\|e_\la\|_{L^p(M)}+\|e_\la\|_{L^p(M)})\\
			&\ls (1+\la)^2\|e_\la\|_{L^p(M)}.
		\end{align*}
\end{proof}
Next, we prove the nodal set estimates. Let 
\[N_\la=\{x\in \M: e_\la(x)=0\},\]
\[D_+=\{x\in \M: e_\la(x)>0\},\]
\[D_-=\{x\in \M: e_\la(x)<0\}.\]
We have $\partial D_\pm=N_\la$. We first express the manifold $M$ as a (essentially) disjoint union 
\[M=\bigcup_{j\ge1}D_{j,+}\cup \bigcup_{j\ge1}D_{j,-}\cup N_\la \]where $D_{j,+}$ and $D_{j,-}$ are are the positive and negative nodal domains of $e_\la$, i.e, the connected
components of the sets $D_+$ and $D_-$. For simplicity, we assume that there are only two nodal domains $D_+$ and $D_-$. Since $\nabla e_\la$ is continuous by Lemma \ref{C1} and we are assuming that zero is a regular value of $e_\la$,  we can apply Gauss-Green theorem on each nodal domain $D_{\pm}$ with boundary $\partial D_{\pm}$.  We have
\[\int_{D_+}div (f\nabla e_\la)dV_g=\int_{N_\la}\langle f\nabla e_\la,\nu_-\rangle dS=-\int_{N_\la}f|\nabla e_\la|dS\]
\[\int_{D_-}div (f\nabla e_\la)dV_g=\int_{N_\la}\langle f\nabla e_\la,\nu_+\rangle dS=\int_{N_\la}f|\nabla e_\la|dS.\]

\begin{equation}\label{green}
	2\int_{N_\la}f|\nabla e_\la|=\int_{D_-}div (f\nabla e_\la)-\int_{D_+}div (f\nabla e_\la).
\end{equation}
Note that by Cauchy-Schwarz\[\int_{N_\la}|\nabla e_\la|\ls (\int_{N_\la}|\nabla e_\la|^2)^\frac12|N_\la|^\frac12.\]
So to estimate the lower bound of $|N_\la|$, it suffices to estimate $\int_{N_\la}|\nabla e_\la|$ and $\int_{N_\la}|\nabla e_\la|^2$. 
\begin{lemma}\label{1norm} If $V\in Lip^1(\M)$, then
	\[\int_{N_\la}|\nabla e_\la|\ge \frac{\la^2}4\|e_\la\|_{L^1(\M)}.\]
\end{lemma}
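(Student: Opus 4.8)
The plan is to insert the constant test function $f\equiv 1$ into the Gauss--Green identity \eqref{green} and to replace $\Delta_g e_\la$ by an expression obtained by squaring the Dirichlet-to-Neumann equation. By Lemma \ref{C1}, $e_\la\in W^{2,p}(M)$ for every $1<p<\infty$, so $\Delta_g e_\la\in L^p(M)\subset L^1(M)$ and $\nabla e_\la\in W^{1,1}(M)$; hence \eqref{green} is legitimate with $f\equiv 1$ and gives
\[
2\int_{N_\la}|\nabla e_\la|\,dS=\int_{D_-}\Delta_g e_\la-\int_{D_+}\Delta_g e_\la=-\int_M\sgn(e_\la)\,\Delta_g e_\la\,dS .
\]

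Next I would compute $\Delta_g e_\la$. Writing $\D=\sqrt{-\Delta_g}+P_0$ with $P_0\in OPS^0$, one has $\D^2=-\Delta_g+P_1$ where $P_1:=\sqrt{-\Delta_g}\,P_0+P_0\sqrt{-\Delta_g}+P_0^2\in OPS^1$. On the other hand, applying $\D$ to $(\D+V)e_\la=\la e_\la$ and using $\D(Ve_\la)=V\D e_\la+[\D,V]e_\la$,
\[
\D^2 e_\la=\D\big((\la-V)e_\la\big)=(\la-V)\D e_\la-[\D,V]e_\la=(\la-V)^2 e_\la-[\D,V]e_\la .
\]
Comparing the two displays gives, as an identity in $L^p(M)$, $\Delta_g e_\la=-(\la-V)^2 e_\la+\big([\D,V]+P_1\big)e_\la$ (the right-hand side lying in $L^p$ by Lemma \ref{taylor} and Proposition \ref{sobolev}). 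Substituting this above,
\[
2\int_{N_\la}|\nabla e_\la|=\int_M(\la-V)^2|e_\la|\,dS-\int_M\sgn(e_\la)\big([\D,V]+P_1\big)e_\la\,dS .
\]

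It remains to estimate the two terms. The first is bounded below by $(\la-\|V\|_{L^\infty})^2\|e_\la\|_{L^1(M)}\ge\tfrac34\la^2\|e_\la\|_{L^1(M)}$ for $\la$ sufficiently large (depending on $\|V\|_{L^\infty}$). For the second, $\big|\int_M\sgn(e_\la)g\big|\le\|g\|_{L^1(M)}$ with $g:=\big([\D,V]+P_1\big)e_\la$; here Lemma \ref{taylor} gives $\|[\D,V]e_\la\|_{L^p}\ls\|V\|_{{\rm Lip}^1}\|e_\la\|_{L^p}$, while ellipticity of $\D$ together with Proposition \ref{sobolev} gives $\|e_\la\|_{W^{1,p}}\ls\|\D e_\la\|_{L^p}+\|e_\la\|_{L^p}=\|(\la-V)e_\la\|_{L^p}+\|e_\la\|_{L^p}\ls\la\|e_\la\|_{L^p}$, hence $\|P_1 e_\la\|_{L^p}\ls\la\|e_\la\|_{L^p}$, for every $1<p<\infty$. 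Choosing $p$ close to $1$, using compactness of $M$ ($\|g\|_{L^1}\ls\|g\|_{L^p}$), the interpolation inequality $\|e_\la\|_{L^p}\le\|e_\la\|_{L^1}^{1-\theta}\|e_\la\|_{L^2}^{\theta}$ with $\theta=\theta(p)\to0$ as $p\to1$, and the reverse bound \eqref{low} $\|e_\la\|_{L^2}\ls\la^{(n-1)/4}\|e_\la\|_{L^1}$, one obtains $\|g\|_{L^1}\ls\la^{1+\varepsilon}\|e_\la\|_{L^1}$ for any $\varepsilon>0$; in particular the error term is $\le\tfrac14\la^2\|e_\la\|_{L^1}$ for $\la$ large. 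Combining the two estimates yields $2\int_{N_\la}|\nabla e_\la|\ge\tfrac12\la^2\|e_\la\|_{L^1(M)}$, which is the assertion.

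The heart of the matter is the control of the lower-order contributions $[\D,V]e_\la$ and $P_1 e_\la$ in an $L^1$-type norm: these arise respectively from the non-commutativity of $\D$ with the Lipschitz potential and from the fact that $\D$ equals $\sqrt{-\Delta_g}$ only modulo $OPS^0$, and neither a commutator with a Lipschitz function nor an $OPS^1$ operator is bounded on $L^1$. One is therefore forced to work in $L^p$ for $p>1$, tracking the $\la$-dependence through the quasimode regularity bound $\|e_\la\|_{W^{1,p}}\ls\la\|e_\la\|_{L^p}$ and reabsorbing the resulting small power of $\la$ via \eqref{low}; checking that this power stays strictly below $\la^2$, so that the error is negligible against the main term $\int_M(\la-V)^2|e_\la|\gtrsim\la^2\|e_\la\|_{L^1}$, is the one quantitative point that requires care.
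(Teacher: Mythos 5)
Your argument is correct and follows essentially the same route as the paper: set $f\equiv1$ in the Gauss--Green identity, rewrite $-\Delta_g e_\la$ in terms of $(\D+V)e_\la=\la e_\la$ up to a commutator $[\D,V]$ and lower-order pseudodifferential corrections, then absorb the errors using Lemma~\ref{taylor}, Lemma~\ref{pdo0}, H\"older interpolation, and the reverse bound \eqref{low}. The only cosmetic difference is that you package all the $P_0$-corrections into a single operator $P_1\in OPS^1$ and bound $\|P_1 e_\la\|_{L^p}$ via the quasimode regularity $\|e_\la\|_{W^{1,p}}\ls\la\|e_\la\|_{L^p}$, whereas the paper keeps the terms explicit (using $\D e_\la=(\la-V)e_\la$ to pull out the factor of $\la$) and estimates each one separately; the two decompositions are algebraically identical and yield error terms of the same strength.
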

\begin{lemma}\label{2norm}If $V\in Lip^1(\M)$, then
	\[\int_{N_\la}|\nabla e_\la|^2\ls \la^3\|e_\la\|_{L^2(\M)}.\]
\end{lemma}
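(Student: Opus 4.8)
The plan is to deduce Lemma~\ref{2norm} directly from the Gauss--Green identity \eqref{green} with the test function $f=|\nabla e_\la|$, thereby reducing the problem to bulk $L^2$ estimates for $\nabla e_\la$ and $\nabla^2 e_\la$. Since $V\in{\rm Lip}^1(M)$, the $W^{2,p}$ bound obtained in the proof of Lemma~\ref{C1} gives $e_\la\in W^{2,p}(M)$ for every $p<\infty$, hence $e_\la\in C^{1,\alpha}(M)$; together with the pointwise (Kato) inequality $|\nabla|\nabla e_\la||\le|\nabla^2 e_\la|$, valid a.e.\ for $e_\la\in W^{2,2}(M)$, this shows $|\nabla e_\la|\in H^1(M)\cap L^\infty(M)$, so $|\nabla e_\la|\nabla e_\la\in H^1(M)$. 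Because $0$ is a regular value of $e_\la$ the nodal domains have $C^1$ boundary and the divergence theorem applies, so inserting $f=|\nabla e_\la|$ into \eqref{green} is legitimate; bounding its right-hand side by $\int_M|\mathrm{div}(|\nabla e_\la|\nabla e_\la)|\,dV_g$ gives
\[
\int_{N_\la}|\nabla e_\la|^2\,dS\ \le\ \tfrac12\int_M\big|\mathrm{div}\big(|\nabla e_\la|\nabla e_\la\big)\big|\,dV_g .
\]
(One can avoid the borderline regularity of $|\nabla e_\la|$ by instead using $f_\varepsilon=(|\nabla e_\la|^2+\varepsilon^2)^{1/2}$, noting $|\nabla f_\varepsilon|\le|\nabla^2 e_\la|$, running the argument below, and letting $\varepsilon\downarrow0$, the error $\varepsilon\|\Delta_g e_\la\|_{L^1(M)}$ vanishing in the limit.)

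Next I would expand $\mathrm{div}(|\nabla e_\la|\nabla e_\la)=\langle\nabla|\nabla e_\la|,\nabla e_\la\rangle+|\nabla e_\la|\,\Delta_g e_\la$ and apply the Kato inequality and Cauchy--Schwarz to obtain
\[
\int_M\big|\mathrm{div}\big(|\nabla e_\la|\nabla e_\la\big)\big|\,dV_g\ \ls\ \big(\|\nabla^2 e_\la\|_{L^2(M)}+\|\Delta_g e_\la\|_{L^2(M)}\big)\|\nabla e_\la\|_{L^2(M)}\ \ls\ \|e_\la\|_{H^2(M)}\,\|\nabla e_\la\|_{L^2(M)} .
\]
It remains to bound the two Sobolev norms in terms of $\la$ and $\|e_\la\|_{L^2(M)}$, and for this I would reuse the estimates from the proof of Lemma~\ref{C1}. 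From $\D=\sqrt{-\Delta_g}+P_0$ and $(\D+V)e_\la=\la e_\la$ one has $\sqrt{-\Delta_g}e_\la=\la e_\la-(P_0+V)e_\la$, and since $P_0\in OPS^0$ and $V\in L^\infty(M)$ act boundedly on $L^2$, this gives $\|\nabla e_\la\|_{L^2(M)}^2=\|\sqrt{-\Delta_g}e_\la\|_{L^2(M)}^2\ls\la^2\|e_\la\|_{L^2(M)}^2$. For the second derivatives, the argument in the proof of Lemma~\ref{C1} — the commutator bound for $[\D,V]$ from Lemma~\ref{taylor} together with the norm equivalence of Proposition~\ref{sobolev} — yields $\|e_\la\|_{H^2(M)}\ls(1+\la)^2\|e_\la\|_{L^2(M)}$. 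Combining the three displays,
\[
\int_{N_\la}|\nabla e_\la|^2\,dS\ \ls\ \la^2\|e_\la\|_{L^2(M)}\cdot\la\|e_\la\|_{L^2(M)}\ =\ \la^3\|e_\la\|_{L^2(M)}^2,
\]
which is the assertion of Lemma~\ref{2norm}.

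The substantive input here is the $H^2$ bound $\|e_\la\|_{H^2(M)}\ls\la^2\|e_\la\|_{L^2(M)}$: this is the only step requiring $V$ Lipschitz rather than merely bounded (it passes through the $[\D,V]$ commutator estimate), and it is essentially Lemma~\ref{C1}. The remaining delicate point is the regularity bookkeeping that makes $f=|\nabla e_\la|$ an admissible choice in \eqref{green}, i.e.\ that $|\nabla e_\la|\in H^1(M)$ so that the Gauss--Green formula is valid on the $C^1$ nodal domains; this rests on $e_\la\in W^{2,2}(M)$ and the Kato inequality and can, as noted, be bypassed by the $\varepsilon$-regularization. Everything else — the divergence identity, the pointwise inequalities, Cauchy--Schwarz — is routine.
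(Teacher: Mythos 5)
Your proof is correct and follows essentially the same route as the paper: the paper sets $f=\sqrt{1+|\nabla e_\la|^2}$ in \eqref{green} (exactly your $f_\varepsilon$ with $\varepsilon=1$), bounds the divergence by $\sqrt{1+|\nabla e_\la|^2}\,|\nabla^2 e_\la|$, and then invokes the same Sobolev-type bounds $\|\nabla e_\la\|_{L^2}\ls\la\|e_\la\|_{L^2}$ and $\|\nabla^2 e_\la\|_{L^2}\ls\la^2\|e_\la\|_{L^2}$ derived via Lemma~\ref{taylor} and Proposition~\ref{sobolev}. Your version with $f=|\nabla e_\la|$ plus Kato's inequality is equivalent (and correctly yields the dimensionally consistent $\|e_\la\|_{L^2}^2$ on the right; the paper's single power of $\|e_\la\|_{L^2}$ implicitly normalizes $\|e_\la\|_{L^2}=1$).
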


Using the these two lemmas and the eigenfunction estimate \eqref{low},
we get the lower bound of the nodal set in Theorem \ref{thm1}
\[|N_\la|\gs \la^{\frac{3-n}2}.\]

\subsection{Proof of Lemma \ref{1norm}} We set $f=1$ in \eqref{green}. So
\[2\int_{N_\la}|\nabla e_\la|=\int_{D_-}\Delta_g e_\la-\int_{D_+}\Delta_g e_\la.\]
Since $\sqrt{-\Delta_g}=\D-P_0$, we have
\[-\Delta_g=(\D+V)^2-(\D V-V\D)-2V(\D +V)+V^2-2P_0(\D+V)+2P_0V+Q_0,\]
where $Q_0=P_0\D-\D P_0+P_0^2\in OPS^0$. Thus,
\begin{align*}
	2\int_{N_\la}|\nabla e_\la|&=\int_{D_+}-\int_{D_-} (\la^2 e_\la-[\D,V]e_\la-2\la Ve_\la+V^2e_\la-2\la P_0 e_\la+2P_0 Ve_\la+Q_0e_\la)\\
	&\ge \la^2\|e_\la\|_{L^1(\M)}-\|[\D, V]e_\la\|_{L^1(\M)}-2\la\|Ve_\la\|_{L^1(\M)}-\|V^2e_\la\|_{L^1(\M)}\\
	&\quad\quad\quad\quad\quad\quad-2\la\|P_0 e_\la\|_{L^1(\M)}-2\|P_0Ve_\la\|_{L^1(\M)}-\|Q_0 e_\la\|_{L^1(\M)}.
\end{align*}
By H\"older's inequality and \eqref{low}, we have
\[\|e_\la\|_{L^{1+\eps}(\M)}\ls \la^{\frac{(n-1)\eps}{2(1+\eps)}}\|e_\la\|_{L^1(\M)},\ 0<\eps<1.\]
Combining this estimate with Lemma \ref{taylor}, we have
\[\|[\D, V]e_\la\|_{L^1(\M)}\ls \|[\D, V]e_\la\|_{L^{1+\eps}(\M)}\ls \|V\|_{{\rm Lip}^1}\|e_\la\|_{L^{1+\eps}(\M)}\ls \la\|V\|_{{\rm Lip}^1}\|e_\la\|_{L^{1}(\M)},\]
if $\eps>0$ is small enough.
Moreover, if $\eps>0$ is small enough, then by Lemma \ref{pdo0} we have
\[\la\|Ve_\la\|_{L^1(\M)}\ls \la\|V\|_{L^\infty}\|e_\la\|_{L^1(\M)}\]
\[\|V^2e_\la\|_{L^1(\M)}\ls \|V\|_{L^\infty}^2\|e_\la\|_{L^1(\M)}\]
\[\la\|P_0 e_\la\|_{L^1(\M)}\ls \la\|P_0e_\la\|_{L^{1+\eps}(\M)}\ls \la\|e_\la\|_{L^{1+\eps}(\M)}\ls \la^{\frac32}\|e_\la\|_{L^1(\M)}\]
\[\|P_0Ve_\la\|_{L^1(\M)}\ls \|P_0Ve_\la\|_{L^{1+\eps}(\M)}\ls \|Ve_\la\|_{L^{1+\eps}(\M)}\ls \la\|V\|_{L^\infty}\|e_\la\|_{L^1(\M)}\]
\[\|Q_0 e_\la\|_{L^1(\M)}\ls \|Q_0 e_\la\|_{L^{1+\eps}(\M)}\ls \| e_\la\|_{L^{1+\eps}(\M)}\ls \la\| e_\la\|_{L^1(\M)}.\]
So we finish the proof Lemma \ref{1norm}.

\subsection{Proof of Lemma \ref{2norm}}		
We set $f=\sqrt{1+|\nabla e_\la|^2}$ in \eqref{green}. And then
\begin{align*}
	2\int_{N_\la}\sqrt{1+|\nabla e_\la|^2}\ |\nabla e_\la|&=\int_{D_-}div(\sqrt{1+|\nabla e_\la|^2}\nabla e_\la)-\int_{D_+}div(\sqrt{1+|\nabla e_\la|^2}\nabla e_\la)\\
	&\ls\int_{\M}|div(\sqrt{1+|\nabla e_\la|^2}\nabla e_\la)|\\
	&\ls \int_{\M}\sqrt{1+|\nabla e_\la|^2}\ |\nabla^2 e_\la|\\
	&\ls (\|e_\la\|_{L^2(\M)}+\|\nabla e_\la\|_{L^2(M)})\|\nabla^2e_\la\|_{L^2(\M)}\\
	&\ls \la^3\|e_\la\|_{L^2(\M)}.
\end{align*}
Here we use the Sobolev estimates of eigenfunctions in the last step. Indeed, we have the following Sobolev estimates
\begin{align*}
	\|\nabla e_\la\|_{L^2(\M)}&\ls \|\D e_\la\|_{L^2(\M)}+\|e_\la\|_{L^2(\M)}\\
	&\le \|(\D+V)e_\la\|_{L^2(\M)}+\|Ve_\la\|_{L^2(\M)}+\|e_\la\|_{L^2(\M)}\\
	&\ls\la\|e_\la\|_{L^2(\M)}+\|V\|_{L^\infty}\|e_\la\|_{L^2(\M)}\\
	&\ls \la\|e_\la\|_{L^2(\M)},
\end{align*}
and similarly, we may exploit Lemma \ref{taylor} to obtain
\begin{align*}\|\nabla^2e_\la\|_{L^2(\M)}&\ls \|\D^2 e_\la\|_{L^2(\M)}+\|\D e_\la\|_{L^2(\M)}+\|e_\la\|_{L^2(\M)}\\
	&\ls \|(\D+V)^2e_\la\|_{L^2(\M)}+\|[\D,V]e_\la\|_{L^2(\M)}+\|V(\D+V)e_\la\|_{L^2(\M)}+\\
	&\ \ \ \ \ \|V^2e_\la\|_{L^2(\M)}+\la\|e_\la\|_{L^2(\M)}\\
	&\ls \la^2\|e_\la\|_{L^2(\M)}+\la\|V\|_{{\rm Lip}^1}\|e_\la\|_{L^2(\M)}+\|V\|_{L^\infty}^2\|e_\la\|_{L^2(\M)}\\
	&\ls \la^2\|e_\la\|_{L^2(\M)}.
\end{align*}
So Lemma \ref{2norm} is proved.

\section*{Acknowledgements}
Y.S. is partially supported by the NSF DMS Grant $2154219$. C.Z. is partially supported by a startup grant from Tsinghua University.

		\bibliographystyle{plain}
		
	\end{document}